\documentclass[10pt, a4paper, reqno, twoside]{amsart}

\numberwithin{equation}{section}

\usepackage{amsmath, amsfonts, amsthm, amssymb, mathtools, multicol, scalefnt, relsize, mathbbol, graphicx, enumitem, xcolor, slashed, xfrac, autobreak, lipsum, euflag}
\usepackage[mathscr]{euscript}
\usepackage[T1]{fontenc}
\usepackage[utf8]{inputenc}
\usepackage[all]{xy}
\setlist[itemize]{itemsep=0pt, parsep=0pt, partopsep=0pt, topsep=1pt, leftmargin=30pt}

\usepackage[hypertexnames=false,
    backref=page,
    pdftex,
    pdfpagemode=UseNone,
    breaklinks=true,
    extension=pdf,
    colorlinks=true,
    linkcolor=blue,
    citecolor=blue,
    urlcolor=blue,
]{hyperref}

\usepackage{cleveref}

\theoremstyle{plain}
\newtheorem{theorem}{Theorem}[section]
\newtheorem{lemma}[theorem]{Lemma}
\newtheorem{corollary}[theorem]{Corollary}
\newtheorem{proposition}[theorem]{Proposition}

\theoremstyle{definition}
\newtheorem{definition}[theorem]{Definition}

\newtheorem{question}[theorem]{Question}
\newtheorem{remark}[theorem]{Remark}
\newtheorem{example}[theorem]{Example}

\theoremstyle{plain}

\DeclareSymbolFontAlphabet{\mathbb}{AMSb}
\DeclareSymbolFontAlphabet{\mathbbl}{bbold}

\makeatletter
\@namedef{subjclassname@2020}{\textup{2020} Mathematics Subject Classification}
\makeatother

\allowdisplaybreaks[4]

\title{Yamabe-type problems on compact Hermitian manifolds}

\author[D.\ Angella]{Daniele Angella}
\address[Daniele Angella]{
Dipartimento di Matematica e Informatica\\
Universit\`a degli Studi di Firenze\\
viale Morgagni 67/a\\
50134 Firenze, Italy
}
\email{daniele.angella@unifi.it}

\author[F.\ Pediconi]{Francesco Pediconi}
\address[Francesco Pediconi]{
Dipartimento di Scienze Matematiche ``Giuseppe Luigi Lagrange''\\
Politecnico di Torino\\
corso Duca degli Abruzzi 24\\
10129 Torino, Italy
}
\email{francesco.pediconi@polito.it}

\author[C.\ Scarpa]{Carlo Scarpa}
\address[Carlo Scarpa]{
Institut Camille Jordan\\
Universit\'e Claude Bernard Lyon 1\\
43 Boulevard du 11 novembre 1918\\
69622 Villeurbanne Cedex, France
}
\email{scarpa@math.univ-lyon1.fr}

\author[C.\ Spotti]{Cristiano Spotti}
\address[Cristiano Spotti]{
Department of Mathematics\\
Aarhus University\\
Ny Munkegade 118\\
8000 Aarhus C, Denmark
}
\email{c.spotti@math.au.dk}

\author[O.\ J.\ Windare]{Oluwagbenga Joshua Windare}
\address[Oluwagbenga Joshua Windare]{
Dipartimento di Matematica e Informatica ``Ulisse Dini''\\
Universit\`a degli Studi di Firenze\\
viale Morgagni 67/a\\
50134 Firenze, Italy
}
\email{oluwagbengajoshua.windare@unifi.it}

\keywords{Chern connection, constant momentum Hermitian metric, deformed Yamabe-type problem}
\thanks{
The first-named author was partially supported by project PRIN2022 ``Real and Complex Manifolds: Geometry and Holomorphic Dynamics'' (code 2022AP8HZ9) and by GNSAGA of INdAM. The second-named author was partially supported by GNSAGA of INdAM. The third-named author is supported by a MSCA Postdoctoral Fellowship, funded by the Research and Innovation framework programme Horizon Europe {\normalsize\euflag} under grant agreement n.101149320. The fourth-named author is partially supported by the Villum YIP+ program 00053062. The last-named author was partially supported by Paolo de Bartolomeis' research grant ``Complex and Symplectic geometry" and by GNSAGA of INdAM.
}
\subjclass[2020]{53B35, 53C55,  53A30}

\begin{document}

\begin{abstract}
We introduce and study a one-parameter Hermitian deformation of the Yamabe problem on compact complex manifolds. The deformation is defined by adding natural torsion terms to the Riemannian scalar curvature, and includes both the classical Yamabe equation and a scalar-curvature equation arising in locally conformally K\"ahler geometry as a momentum map. We analyze criteria for the existence of solutions, and discuss several examples.
\end{abstract}

\maketitle

\section{Introduction}
\setcounter{equation} 0

The existence of Riemannian metrics with constant scalar curvature has long been a central problem in differential geometry. The Yamabe problem \cite{MR0125546}, whose celebrated solution is due to Trudinger \cite{MR240748}, Aubin \cite{MR0431287}, and Schoen \cite{MR788292}, shows that within the conformal class of any Riemannian metric on a compact manifold, one can always find another metric of constant scalar curvature. For a unified expository account, see \cite{MR0888880}.

In the context of Hermitian metrics on compact complex manifolds, the situation is complicated by the fact that one may consider \emph{different notions} of curvature. Indeed, one could argue that in this context it is also natural to consider the \emph{Chern connection} of the manifold rather than the Levi-Civita one. Thus, one is therefore led to consider a Yamabe-type problem for the Chern connection \cite{MR3696598}: \emph{given a compact Hermitian manifold $(M^{2n},J,\omega)$, is there a function $f$ such that the rescaled metric $e^f\omega$ has constant Chern scalar curvature?} Despite being formally similar to the Riemannian Yamabe problem, this Chern--Yamabe problem does not share many of its good analytic properties. Most importantly, it is not, in general, variational \cite[Proposition 5.3]{MR3696598}. Nonetheless, several important results have been obtained for this problem; see, for example \cite{MR4070351, MR4184828, MR4173158, MR3833814, MR4203643, MR4236644, MR4554058, MR4411750, MR4571872, MR4971815, MR4418334, MR4471719}.

In this paper, we establish several existence results for a variant of the Chern--Yamabe problem, which modifies the Chern scalar curvature by a term depending on the torsion of the Chern connection of the Hermitian metric. Our starting point was the observation that the expression
\begin{equation}\label{eq:momentum_intro}
    \mu(\omega)\coloneqq \operatorname{scal}^{\mathrm{Ch}}(\omega) + \frac{n}{n+1} d^*\theta,
\end{equation}
which is associated with a Donaldson--Fujiki-type infinite-dimensional symplectic reduction in the locally conformally K\"ahler case \cite{MR4960067}, gives rise to a variational problem in the conformal class (see also \cite{MR4382665} for similar considerations in the globally conformally K\"ahler case). Here $\theta$ is the Lee form of $\omega$, that is, the unique $1$-form such that $ d\omega^{n-1} = \theta \wedge \omega^{n-1}$.
Furthermore, we noticed that there exists a one-parameter family, natural from the Hermitian point of view, of variational problems that connects the above expression to the classical Yamabe problem (see Remark \ref{rmk:psi-a-b}).
More precisely, for $t \geq 0$, we consider the \emph{$t$-deformed scalar curvature}
$$
\mu^{t}(\omega) \coloneqq \mathrm{scal}^{\mathrm{Ch}}(\omega) + \frac{t-n}{n-1}\, d^{*}\theta  + \frac{t-2n}{4}\,|T|^2,
$$
where $T$ is the torsion of the Chern connection.
Indeed, by a classical formula \cite{MR0742896}, for $t=2n-1$ we have that $\mu^{2n-1}(\omega)$ coincides exactly with the Riemannian scalar curvature of the underlying metric $g$, while for $t=2n$ we recover \eqref{eq:momentum_intro}.

Thus, in this note we are interested in  the following question:
\begin{quote}
    \textit{Given a compact complex manifold $(M,J)$ and a Hermitian metric $\omega$ on it, does there exist metric in the conformal class of $\omega$ having constant $\mu^t$?}
\end{quote}

In particular, as noted above, the special case $t=2n$ appears to be both of analytic and geometric interest. It is natural to wonder if the connection between Yamabe problems and stability phenomena of \cite{LLS23} also manifests in this setting.

An important remark is that the sign of the deformed scalar curvature for $t=0$ plays a special role in our study of the Yamabe problems for other values of $t$, see Theorem \ref{thm:aubins_crit}. Moreover, in the locally conformally K\"ahler case, $\mu^0$ coincides with the scalar curvature of the Weyl connection, see Remark \ref{rmk:weyl}.

\subsection{Summary of results} \hfill \par

In Lemma \ref{lem:confchange}, we compute the conformal variation of the quantities $\mu^t(\omega)$, which allows us to reformulate the $t$-deformed Yamabe-type  problem as the following elliptic PDE for a positive, smooth function $u$:
\begin{equation}\label{eq:yamabe_intro}
    \frac{2t}{n-1}\Delta u + \mu^t(\omega)\,u = c\, u^{\frac{n+1}{n-1}}. 
\end{equation}
Note that the exponent
$$ \frac{n+1}{n-1}=2^*-1 , \qquad \text{ where } 2^* = \frac{2n}{n-1} $$
is the critical exponent in the Sobolev embedding on $M$. In Theorem \ref{thm:variational}, we show that \eqref{eq:yamabe_intro} is the Euler--Lagrange equation of the \emph{$t$-deformed Einstein--Hilbert functional}
\begin{equation}\label{eq:EH-t_intro}
    \mathrm{EH}^{t}(\omega) \coloneqq \frac{\int_M\mu^{t}(\omega)\frac{\omega^n}{n!}}{\left(\int_M\frac{\omega^n}{n!}\right)^{\frac{n-1}{n}}},
\end{equation}
restricted to any conformal class of Hermitian metrics on $M$.

The critical equation \eqref{eq:yamabe_intro} is closely related to the Yamabe equation. This analogy makes it possible, in particular, to show that the sign of the expected constant $t$-deformed scalar curvature (\emph{i.e.}, the constant $c$ in \eqref{eq:yamabe_intro}) is a conformal invariant, and we establish uniqueness of solutions up to homotheties when the expected value is non-positive (see Proposition \ref{prop:rigidity}).

As the deformed Einstein--Hilbert functional is bounded from below on each conformal class (see Proposition \ref{prop:minimizer}), the \emph{$t$-deformed Yamabe-type invariant}
\begin{equation}
\lambda_{(M,J)}^{t}(\{\omega\}) \coloneqq \inf \big\{ \mathrm{EH}^t(\omega') : \omega'\in\{\omega\} \big\}
\end{equation}
is well-defined. As happens for the classical Yamabe invariant, $\lambda^t$ is always bounded from above. Moreover, when it is strictly less than its critical value, then there exists a positive solution to the deformed Yamabe-type equation, as it follows by applying Aubin's result \cite{MR0431287}.

\begin{theorem}[{see \cite{MR0431287, Hebey_ICTP}}] \label{thm:existence_subcritical_intro}
Let $(M^{2n},J,\omega)$ be a compact Hermitian manifold of complex dimension $n$. Let $t>0$. Then
\begin{equation}\label{eq:bound}
\lambda^{t}_{(M,J)}(\{\omega\}) \leq 2n t\, \mathrm{Vol}(S^{2n})^{\frac{1}{n}} \,\, .
\end{equation}
Moreover, if $\lambda^{t}_{(M,J)}(\{\omega\}) < 2n t\, \mathrm{Vol}(S^{2n})^{\frac{1}{n}}$, then there exists a positive smooth solution to \eqref{eq:yamabe_intro}, which is a minimiser of the $t$-deformed Einstein--Hilbert functional $\mathrm{EH}^t$ restricted to the conformal class $\{\omega\}$ as in \eqref{eq:EH-t_intro}.
\end{theorem}

\begin{remark}
It is still not clear whether equality in \eqref{eq:bound} can ever be realised, beside the case of $\mathbb CP^1$.
\end{remark}

In Section \ref{sec:invariant}, we show several results linking $\lambda^{t}_{(M,J)}(\{\omega\})$ with the Gauduchon degree $\Gamma_{(M,J)}(\{\omega\})$ of the conformal class $\{\omega\}$. In particular, for $0<t\leq 2n$, Theorem~\ref{thm:existence_subcritical_intro} applies to every conformal classes with non-positive Gauduchon degree.

\begin{theorem}
\label{thm:existence-negative-intro}
Let $(M^{2n},J,\omega)$ be a compact Hermitian manifold of complex dimension $n$. Fix $0<t\leq 2n$. If $\Gamma_{(M,J)}(\{\omega\}) < 2n t\, \mathrm{Vol}(S^{2n})^{\frac{1}{n}}$, then there exists a constant $t$-deformed scalar curvature metric in the conformal class $\{\omega\}$. Moreover, if $\Gamma_{(M,J)}(\{\omega\}) \leq 0$, then such a metric is unique up to scaling, and in particular it is non-positive.

This includes the case of compact complex manifolds with non-negative Kodaira dimension.
\end{theorem}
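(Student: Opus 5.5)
The plan is to reduce to Theorem~\ref{thm:existence_subcritical_intro}. It suffices to show that for $0<t\le 2n$,
\[
\lambda^t_{(M,J)}(\{\omega\}) \le \Gamma_{(M,J)}(\{\omega\}),
\]
with $\Gamma$ normalised as the Gauduchon degree of the volume-one Gauduchon representative. The hypothesis $\Gamma<2nt\,\mathrm{Vol}(S^{2n})^{1/n}$ then puts us strictly below the critical value, and Theorem~\ref{thm:existence_subcritical_intro} yields a positive smooth solution of \eqref{eq:yamabe_intro}, i.e.\ a constant-$\mu^t$ metric in $\{\omega\}$.

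To get the inequality, I would test $\mathrm{EH}^t$ on the Gauduchon representative $\omega_G\in\{\omega\}$, rescaled to volume one. Since $\omega_G$ is Gauduchon, $d^*\theta_G=0$, so
\[
\int_M\mu^t(\omega_G)\frac{\omega_G^n}{n!}=\int_M \mathrm{scal}^{\mathrm{Ch}}(\omega_G)\frac{\omega_G^n}{n!}+\frac{t-2n}{4}\int_M|T_G|^2\frac{\omega_G^n}{n!}.
\]
The first term is (up to the normalisation constant fixed in Section~\ref{sec:invariant}) exactly $\Gamma(\{\omega\})$, i.e.\ the degree of $K_M^{-1}$ with respect to $\omega_G$. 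The second term is $\le 0$ precisely because $t\le 2n$. Hence $\lambda^t\le \mathrm{EH}^t(\omega_G)\le\Gamma$. The main point to check carefully is that the normalisations of $\mathrm{EH}^t$ (volume power $\frac{n-1}{n}$) and of $\Gamma$ agree, so that no spurious constant enters. Since both are scale-invariant of the same weight, the comparison at volume one is legitimate.

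For the second part, suppose $\Gamma\le 0$. Then $\lambda^t\le 0$, and the solution $u$ produced above is a minimiser, so its constant $c$ has the sign of $\lambda^t$ (pair \eqref{eq:yamabe_intro} with $u$ to get $c\int u^{2^*}=\int\big(\tfrac{2t}{n-1}|du|^2+\mu^t u^2\big)$, which is proportional to $\lambda^t$ at a minimiser). Thus $c\le 0$, and Proposition~\ref{prop:rigidity} gives uniqueness up to homotheties together with non-positivity.

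Finally, for the statement about Kodaira dimension: if $\kappa(M)\ge 0$, some power $K_M^{\otimes m}$ has a nonzero holomorphic section $s$. By a standard Gauduchon argument, $\deg_{\omega_G}K_M\ge 0$: the Chern curvature of the induced metric satisfies $\frac{i}{2\pi}\partial\bar\partial\log|s|^2 = m\,c_1(K_M)$-form plus a positive current of integration, and pairing with the $\partial\bar\partial$-closed form $\omega_G^{n-1}$ kills the exact part. Hence $\Gamma\le 0$ and the previous paragraph applies. I expect the main obstacle to be purely bookkeeping: identifying the Gauduchon degree normalisation with $\int\mathrm{scal}^{\mathrm{Ch}}$ on $\omega_G$ with the correct constants used in Section~\ref{sec:invariant}.
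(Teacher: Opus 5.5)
Your proposal is correct and follows the paper's route. The bound $\lambda^t_{(M,J)}(\{\omega\})\le\Gamma_{(M,J)}(\{\omega\})$, obtained by testing $\mathrm{EH}^t$ on the volume-one Gauduchon metric (Proposition~\ref{prop:Gaud_Yamabe}), feeds into Theorem~\ref{thm:existence_subcritical}. Uniqueness then comes from Proposition~\ref{prop:rigidity}, and your sign argument for $c\le 0$ via the minimiser is an equally valid substitute for Proposition~\ref{prop:Gdeg}.

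One small slip in the Kodaira-dimension step: Poincar\'e--Lelong reads $c_1(K_M^{\otimes m},h)=[\mathrm{div}(s)]-\frac{\sqrt{-1}}{2\pi}\partial\bar\partial\log|s|_h^2$. So the curvature form is the positive current minus a $\partial\bar\partial$-exact term, not $\frac{\sqrt{-1}}{2\pi}\partial\bar\partial\log|s|^2$ equal to $m\,c_1$ plus the current as you wrote. With the correct sign, pairing with $\omega_G^{n-1}$ gives $\deg_{\omega_G}K_M\ge 0$, as you conclude.
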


Further results are obtained for special conformal classes, more precisely, if the metrics are locally conformally K\"ahler (see Corollary \ref{cor:existence}).
This applies in particular to prove that, if the conformal class contains a K\"ahler metric, then it also contains a constant momentum Hermitian metric, whatever the sign of the deformed Yamabe invariant is. This overcomes the limitation of the negative curvature in the corresponding Chern--Yamabe-type problems.
We notice that these results are based on a criterion for the existence of metrics with constant $t$-deformed scalar curvature that is more general than Theorem \ref{thm:existence-negative-intro}, drawing on Aubin's work \cite{MR0431287}. See in particular Theorem \ref{thm:aubins_crit}. Moreover, we will use this criterion to exhibit a large class of examples of manifolds admitting solutions of \eqref{eq:yamabe_intro} in Section \ref{sec:examples}, see Theorem \ref{thm:prod-kahler}.

Furthermore, by exploiting the computations for cohomogeneity-one Hermitian non-K\"ahler manifolds developed in \cite{MR4567558}, we construct the first non-homogeneous example of a Hermitian non-K\"ahler metric with \emph{positive} constant momentum, see Theorem \ref{thm:ex}.

\subsection{Further considerations} \hfill \par

There are several natural directions not investigated in this paper. For example, one might want to consider the critical points of the $t$-deformed Einstein--Hilbert functional on the full space of Hermitian metrics on $(M, J)$, rather than the restriction to a given conformal class only. This is related to considering the \textit{$t$-deformed Yamabe invariant} 
$$
\sigma^{\mathbb{C}}_t(M,J) \coloneqq \sup_{\omega\text{ Hermitian}} \lambda_{(M,J)}^{t}(\{\omega\}),
$$
which is a \emph{function} of the parameter $t$. Note that for $t=2n-1$ we do not re-obtain the classical Yamabe invariant, as we are considering only conformal classes of Hermitian metrics on $M$, rather than the full set of Riemannian metrics.
Furthermore, it seems interesting to investigate further relations between this invariant and the underlying complex geometry, as in \cite{MR1674105, MR4785592}.
We leave these considerations to future work.

We remark that one could further restrict this invariant by restricting attention to classes of special Hermitian metrics, such as locally conformally K\"ahler, K\"ahler, balanced, etc. A similar point of view has been taken up in the recent work \cite{LLS25}, in which a \emph{Sasakian} version of such an invariant has been shown to be related to the K-semistability of K\"ahler cones.

\smallskip

Several variations of the Yamabe problem have been considered for Hermitian manifolds. A notable example is \cite{MR1967044}, which uses the same results of Aubin's to establish a very general existence result for a $J$-twisted Yamabe problem. Note, however, that the momentum \eqref{eq:momentum_intro} is quite different from the \emph{$J$-twisted scalar curvature} considered in \cite{MR1967044}.
Furthermore, other variants of the Chern--Yamabe problem have been considered; see, for example, \cite{MR4554058} and \cite{MR4184828}, where one considers the scalar curvature of a different connection than the Chern one, or the \emph{almost complex} setting. Given the non-variational nature of the Chern--Yamabe equation, the existence results in these cases are limited to the negative curvature cases. It would be natural to adapt our equations in similar directions, considering possibly a different family of equations than the $t$-deformed scalar curvature equation.

\subsection{Organisation of the paper} \hfill \par

In Section \ref{sec:preliminaries}, we collect some definitions and 
a few computations that will be crucial for the study of the $t$-deformed scalar curvatures. In Section \ref{sec:yamabe}, we prove the variational characterisation of our equations through the $t$-deformed Einstein--Hilbert functional, and we establish a uniqueness result in the non-positively curved case.
Section \ref{sec:invariant} is the technical heart of the paper. We review some classical results of Aubin and show what they imply in our situation to prove existence criteria for solutions of equation \eqref{eq:yamabe_intro}, {\itshape e.g.} Theorem \ref{thm:existence-negative-intro}, Theorem \ref{thm:aubins_crit} and its application Corollary \ref{cor:existence}.
In Section \ref{sec:examples}, we first show a general existence result for some conformal classes on product Hermitian manifolds $X \times Y$ such that $X$ is a K\"ahler manifold. Then, we show an explicit example of a ruled manifold with positive constant momentum.
Finally, the Appendix collects several computations, and a proof of Aubin's result stated in Theorem \ref{thm:aubins_crit}.

\bigskip

\noindent \textbf{Acknowledgements.} We would like to thank Nicolina Istrati for useful exchanges and comments on an earlier version of this paper.

\section{Preliminaries and Notation} \label{sec:preliminaries}

Let $(M^{2n}, J, g)$ be a compact Hermitian manifold of complex dimension $n$, with associated $(1,1)$-form $\omega \coloneqq g(J\cdot, \cdot)$. Once $J$ is fixed, we will often use $\omega$ and $g$ interchangeably, referring implicitly to the corresponding metric whenever the meaning is clear from the context. Since
$
\omega^n = n!\,d\mathrm{vol}_g
$,
where $d\mathrm{vol}_g$ denotes the volume form induced by $g$, for every smooth function $f: M \to \mathbb{R}$ and for every $1 \leq p < \infty$, the $L^p$ norm of $f$ is given by
$$
\|f\|_{L^p} = \left(\int_M |f|^p\frac{\omega^n}{n!}\right)^{\frac{1}{p}}.
$$

We denote by $\Delta \coloneqq d^* d + d d^*$ the Hodge-de Rham Laplacian with respect to $\omega$, where $d^*$ is the adjoint of $d$ with respect to the global $L^2$-inner product induced by $g$. We recall that, for any smooth real-valued function $f$ on $M$, at a local maximum point $p$ one has $(\Delta f)(p) \geq 0$. We also denote by $\nabla$ the gradient operator on smooth functions induced by $g$, \emph{i.e.}, $\nabla f = (df)^{\sharp}$, where $\sharp$ denotes the musical isomorphism induced by $g$.
\smallskip

We denote by $D^g$ the Levi-Civita connection of $g$ and by $D^{\mathrm{Ch}}$ the \emph{Chern connection} of $(J,g)$, which are related by
$$
g(D^{\mathrm{Ch}}_XY,Z) \coloneqq g(D^g_XY,Z) -\tfrac{1}{2}d\omega(JX,Y,Z) \,\, .
$$
We recall that $D^{\mathrm{Ch}}$ is the unique linear connection that is Hermitian, \emph{i.e.}, $D^{\mathrm{Ch}}J = D^{\mathrm{Ch}}g = 0$, and whose torsion $T$, given by
\begin{equation} \label{eq:T}
-2g(T(X,Y),Z) = d\omega(JX,Y,Z) +d\omega(X,JY,Z) \,\, ,
\end{equation}
has vanishing $(1,1)$-part, namely
$$
JT(X,Y) = T(JX,Y) = T(X,JY) \,\, ,
$$
see, \emph{e.g.}, \cite{MR1456265}. We denote by $\theta$ the \emph{Lee form}, defined as the trace of the torsion $T$. Explicitly, given a $g$-orthonormal local frame $\{ e_{\alpha} \}_{\alpha=1,\ldots, 2n}$ of $TM$, we have
\begin{equation} \label{eq:theta}
\theta \coloneqq \sum_{\alpha=1}^{2n} g( T(\_, e_{\alpha}), e_{\alpha}) \,\, .
\end{equation}
We also recall that the Lee form is characterized by the property
$$
{d}\omega^{n-1} = \theta \wedge \omega^{n-1} \,\, ,
$$
see, \emph{e.g.}, \cite[Th{\'e}or{\`e}me 3]{MR0470920} or \cite[p.\ 500]{MR0742896}.

\smallskip

There are several notable classes of special Hermitian metrics. Among these, \emph{balanced metrics} are characterized by the vanishing of the Lee form, \emph{i.e.}, $\theta = 0$. On the other hand, \emph{Gauduchon metrics} are characterized by the Lee form being co-closed, \emph{i.e.}, $d^*\theta = 0$. By \cite[Th\'eor\`eme 1]{MR0470920}, any conformal class of Hermitian metrics on a compact complex manifold contains a unique Gauduchon metric of volume $1$. Finally, a Hermitian metric is called \emph{locally conformally K{\"a}hler} if
$$
d\omega = \frac{1}{n-1}\theta \wedge \omega \,\, , \quad d\theta = 0 \,\, ,
$$
and \emph{Vaisman} if it is locally conformally K{\"a}hler and $D^g\theta = 0$ (see, \emph{e.g.}, \cite{MR4771164}).
\smallskip

We denote by
\begin{equation} \label{eq:Theta}
\Theta(X,Y,Z,W) \coloneqq g(D^{\mathrm{Ch}}_XD^{\mathrm{Ch}}_YZ -D^{\mathrm{Ch}}_YD^{\mathrm{Ch}}_XZ -D^{\mathrm{Ch}}_{[X,Y]}Z,W)
\end{equation}
the curvature tensor of the Chern connection, and by $\mathrm{scal}^{\mathrm{Ch}}(\omega)$ the \emph{Chern scalar curvature}, obtained by a trace of \eqref{eq:Theta}. Precisely, if $\{ e_i, Je_i \}_{i=1,\ldots, n}$ is a $(J,g)$-unitary local frame of $TM$, then
\begin{equation} \label{eq:scalCh}
\mathrm{scal}^{\mathrm{Ch}}(\omega) \coloneqq -2\sum_{i,j=1}^{n} \Theta(e_i,Je_i,e_j,Je_j) \,\, .
\end{equation}
With this normalization, \cite[Eq.\ (33)]{MR0742896} implies that the usual \emph{Riemannian scalar curvature} $\mathrm{scal}(g)$ is related to the Chern scalar curvature by
\begin{equation} \label{eq:scal-scalCh}
\mathrm{scal}(g) = \mathrm{scal}^{\mathrm{Ch}}(\omega) +d^*\theta -\tfrac14|T|^2 \,\, .
\end{equation}
In particular, $\mathrm{scal}(g) = \mathrm{scal}^{\mathrm{Ch}}(\omega)$ on a K\"ahler manifold.
\smallskip

Finally, we collect here some formulas for the conformal change of geometric quantities associated with a Hermitian metric, which can be obtained by straightforward computations. For the sake of completeness, we include the proof of the following lemma in the Appendix.

\begin{lemma} \label{lem:confchange}
Let $(M^{2n},J,\omega)$ be a compact Hermitian manifold of complex dimension $n$. Let $f\colon M \to \mathbb{R}$ be a smooth function, and set $\omega' \coloneqq e^{f} \omega$. Denote by $T'$, $\theta'$, and $d^*_{g'}$ the torsion, the Lee form, and the codifferential associated with $\omega'$, respectively. Then
\begin{align}
\mathrm{scal}^{\mathrm{Ch}}(\omega') &= e^{-f} \Big(\mathrm{scal}^{\mathrm{Ch}}(\omega) + n\,\Delta f + n\,g(\nabla f, \theta^{\sharp}) \Big) \,\, , \label{eq:confscalCh} \\
d^*_{g'}\theta' &= e^{-f} \Big( d^*\theta+(n-1)\Delta f -(n-1) g( \nabla f, \theta^{\sharp}) - (n-1)^2| \nabla  f|^2 \Big) \,\, , \label{eq:confcodiftheta}\\
|T'|_{g'}^2 &= e^{-f}\Big(|T|^2 +4g( \nabla f, \theta^{\sharp}) +2(n-1)\lvert \nabla f\rvert^2\Big) \label{eq:confT} \,\, ,
\end{align}
where all the quantities on the right-hand sides are computed with respect to $\omega$.
\end{lemma}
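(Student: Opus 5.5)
The plan is to compute everything from the transformation of $d\omega$ and of the torsion, keeping the conventions of \eqref{eq:T}, \eqref{eq:theta} and \eqref{eq:scalCh}. The starting identity is $d\omega' = e^f(d\omega + df\wedge\omega)$. We use $\omega'^{\,n-1}=e^{(n-1)f}\omega^{n-1}$ together with the characterization $d\omega^{n-1}=\theta\wedge\omega^{n-1}$ (and injectivity of wedging with $\omega^{n-1}$ on $1$-forms). This immediately gives
$$\theta' = \theta + (n-1)\,df.$$
All three formulas will be derived from this, in the order \eqref{eq:confcodiftheta}, \eqref{eq:confT}, \eqref{eq:confscalCh}.

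\emph{Codifferential \eqref{eq:confcodiftheta}.} For a $1$-form $\alpha$ and $g'=e^fg$ in real dimension $2n$, the standard formula is
$$d^*_{g'}\alpha = e^{-f}\big(d^*\alpha - (n-1)\,g(df,\alpha)\big),$$
since $\mathrm{vol}_{g'}=e^{nf}\mathrm{vol}_g$ and $|\cdot|^2_{g'}=e^{-f}|\cdot|^2_g$ on $1$-forms. I will apply it to $\alpha=\theta+(n-1)df$, using $d^*df=\Delta f$. The cross terms then give $-(n-1)g(\nabla f,\theta^\sharp)-(n-1)^2|\nabla f|^2$, which is exactly \eqref{eq:confcodiftheta}.

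\emph{Torsion \eqref{eq:confT}.} From \eqref{eq:T}, with $d\omega' = e^f(d\omega+df\wedge\omega)$ and lowering with $g'$, I expect
$$T'(X,Y) = T(X,Y) + \tfrac12\big(\text{terms in } df(JX)JY,\ df(X)Y \text{ etc.}\big),$$
that is, $T'=T+\tfrac12(df\wedge\,\cdot\,)^{(2,0)+(0,2)}$-type correction. Concretely, $T'(X,Y)=T(X,Y)+\tfrac12\big(df(X)Y-df(Y)X - df(JX)JY+df(JY)JX\big)$ up to a sign to be fixed against \eqref{eq:theta}. The sign is fixed by requiring that the trace reproduce $\theta'=\theta+(n-1)df$. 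Then $|T'|^2_{g'}=e^{-f}|T'|^2_g$, because $T'$ is a $(1,2)$-tensor. Expanding the square gives three pieces: $|T|^2$; a cross term, which is a trace of $T$ against $df$ and hence equals $4g(\nabla f,\theta^\sharp)$; and the norm of the correction, which is $2(n-1)|\nabla f|^2$ by an orthonormal frame count.

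\emph{Chern scalar curvature \eqref{eq:confscalCh}.} The Chern connection of $e^f\omega$ differs from that of $\omega$ by $\partial f$ acting on $T^{1,0}$. Its curvature therefore changes by $-\partial\bar\partial f\cdot\mathrm{Id}$ (in the appropriate normalization), so the Chern Ricci form changes by $-n\,i\partial\bar\partial f$. Tracing against $\omega'$ gives $e^{-f}$ times $\mathrm{scal}^{\mathrm{Ch}}(\omega)$ plus $n$ times the trace of $2i\partial\bar\partial f$ with respect to $\omega$. Finally I will use the identity
$$2\,\mathrm{tr}_\omega(i\partial\bar\partial f) = -\Delta f - g(df,\theta),$$
which holds for the Hodge Laplacian $\Delta=d^*d$ on a non-Kähler Hermitian manifold; the sign is to be checked. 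The Lee form correction comes from $d^*$ involving $d\omega^{n-1}=\theta\wedge\omega^{n-1}$.

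The main obstacle is bookkeeping of normalizations: the factor $-2$ in \eqref{eq:scalCh}, the sign of $\Delta$, and the sign conventions in \eqref{eq:T}. The consistency check I will use is the combination \eqref{eq:scal-scalCh}. With $\mathrm{scal}(g)=\mathrm{scal}^{\mathrm{Ch}}+d^*\theta-\frac14|T|^2$, the three formulas must sum to the classical Riemannian rule
$$e^{-f}\Big(\mathrm{scal}+(2n-1)\Delta f-\tfrac{(2n-1)(2n-2)}{4}|\nabla f|^2\Big).$$
The $\theta$-terms indeed cancel: $n-(n-1)-1=0$. The $\Delta f$-coefficient is $n+(n-1)=2n-1$, and the gradient coefficient is $-(n-1)^2-\frac{n-1}{2}=-\frac{(n-1)(2n-1)}{2}$, both of which match. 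This check fixes any residual signs.
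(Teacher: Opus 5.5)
Your proposal is correct and follows essentially the paper's route. You use $\theta'=\theta+(n-1)df$ with the conformal transformation of $d^*$ for \eqref{eq:confcodiftheta}, and a direct computation of $T'$ for \eqref{eq:confT}; your formula for $T'$ is right as written, sign included, and with $|T|^2=\sum_{\alpha,\beta}|T(e_\alpha,e_\beta)|^2$ it gives exactly $4g(\nabla f,\theta^\sharp)$ and $2(n-1)|\nabla f|^2$. For \eqref{eq:confscalCh} you use the shift of the Chern curvature by $-\partial\bar\partial f\,\mathrm{Id}$ (equivalently, of $\log\det(h_{i\bar j})$ by $nf$) together with Gauduchon's identity $-2h^{\bar j i}\partial_i\partial_{\bar j}f=\Delta f+g(\nabla f,\theta^\sharp)$.

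There is one sign slip. Since $\mathrm{scal}^{\mathrm{Ch}}=-2h^{\bar j i}\partial_i\partial_{\bar j}\log\det(h_{k\bar\ell})$, the correction is $-n$ (not $+n$) times $2\,\mathrm{tr}_\omega(\sqrt{-1}\partial\bar\partial f)$; with your sign, your identity would give $-n\Delta f-n\,g(\nabla f,\theta^\sharp)$. Your consistency check catches this. In fact, once \eqref{eq:confcodiftheta} and \eqref{eq:confT} are proved, \eqref{eq:scal-scalCh} plus the Riemannian conformal rule already determine \eqref{eq:confscalCh} completely.
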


\section{A Yamabe problem for the deformed scalar curvatures}
\label{sec:yamabe}
\setcounter{equation} 0

\subsection{The deformed scalar curvatures} \hfill \par

We introduce the following family of \emph{deformed scalar curvatures} on a given compact Hermitian manifold, defined by perturbing the Chern scalar curvature with additional torsion terms.

\begin{definition}\label{def:mu-t}
Let $(M^{2n},J)$ be a compact complex manifold of complex dimension $n$. For every Hermitian metric $\omega$ on $(M^{2n},J)$ and for every $t \in \mathbb{R}$, we define the \emph{$t$-deformed scalar curvature of $\omega$} by
\begin{equation}\label{eq:mu_t}
\mu^{t}(\omega) \coloneqq \mathrm{scal}^{\mathrm{Ch}}(\omega) +\frac{t-n}{n-1}d^*\theta + \frac{t-2n}{4}|T|^2 \,\, ,
\end{equation}
where $\mathrm{scal}^{\mathrm{Ch}}(\omega)$ is the Chern scalar curvature of $\omega$, defined in \eqref{eq:scalCh}, $T$ is the torsion of the Chern connection of $\omega$, defined in \eqref{eq:T}, and $\theta$ is the Lee form of $\omega$, defined in \eqref{eq:theta}.
\end{definition}

We remark that, for $t=2n$, the $2n$-deformed scalar curvature admits a geometric interpretation, in the locally conformally K\"ahler setting, as the momentum map considered in \cite{MR4960067}, albeit here forgetting a condition on the dual of the Lee form. For this reason, we define the \emph{momentum of $\omega$} to be its $2n$-deformed scalar curvature, namely
\begin{equation} \label{eq:moment}
\mu^{2n}(\omega) = \mathrm{scal}^{\mathrm{Ch}}(\omega) +\frac{n}{n-1}d^*\theta \,\, .
\end{equation}
We also remark that \eqref{eq:scal-scalCh} implies that the $(2n{-}1)$-deformed scalar curvature coincides with the Riemannian scalar curvature, namely
$$
\mu^{2n-1}(\omega) = \mathrm{scal}^{\mathrm{Ch}}(\omega) +d^*\theta -\tfrac14|T|^2 = \mathrm{scal}(g_{\omega}) \,\, .
$$

We are interested in \emph{Yamabe problem for the $t$-deformed scalar curvature}, that is, in finding Hermitian metrics with constant $t$-deformed scalar curvature within a fixed conformal class. For this reason, we prove the following lemma.

\begin{proposition} \label{prop:linear}
Let $(M^{2n},J,\omega)$ be a compact Hermitian manifold of complex dimension $n$, let $f: M \to \mathbb{R}$ be a smooth function, and let $c \in \mathbb{R}$ be a constant. Then the conformal metric $e^f\omega$ has constant $t$-deformed scalar curvature equal to $c$, namely
$$
\mu^{t}(e^f\omega) =c \,\, ,
$$
if and only if the function $u \coloneqq e^{\frac{n-1}{2}f}$ is a smooth positive solution of the semilinear equation
\begin{equation} \label{eq:t-DY} \tag{$t$-DY}
    \frac{2t}{n-1}\Delta u + \mu^t(\omega)\,u = c\, u^{\frac{n+1}{n-1}} \,\, .
\end{equation}
\end{proposition}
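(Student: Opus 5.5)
The plan is a direct computation: use Lemma \ref{lem:confchange} to obtain a conformal transformation law for $\mu^t$, then substitute $u = e^{\frac{n-1}{2}f}$. Throughout we assume $n\ge 2$, which the statement already needs since it divides by $n-1$. Set $\omega' = e^f\omega$.

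\emph{Step 1: transformation law for $\mu^t$.} By Definition \ref{def:mu-t},
$$\mu^t(\omega') = \mathrm{scal}^{\mathrm{Ch}}(\omega') + \tfrac{t-n}{n-1}\, d^*_{g'}\theta' + \tfrac{t-2n}{4}|T'|^2_{g'}.$$
Substitute \eqref{eq:confscalCh}, \eqref{eq:confcodiftheta} and \eqref{eq:confT}. Each term carries a factor $e^{-f}$, and I collect the coefficients of $\Delta f$, $g(\nabla f,\theta^\sharp)$ and $|\nabla f|^2$:
\begin{itemize}
\item $\Delta f$: $n + (t-n) = t$;
\item $g(\nabla f,\theta^\sharp)$: $n - (t-n) + (t-2n) = 0$;
\item $|\nabla f|^2$: $-(t-n)(n-1) + \tfrac{(t-2n)(n-1)}{2} = -\tfrac{(n-1)t}{2}$.
\end{itemize}
The zeroth-order terms reassemble into $\mu^t(\omega)$. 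Hence
$$\mu^t(e^f\omega) = e^{-f}\Big(\mu^t(\omega) + t\,\Delta f - \tfrac{(n-1)t}{2}|\nabla f|^2\Big).$$
The key point, and the only real check, is that the mixed term $g(\nabla f,\theta^\sharp)$ cancels exactly. This cancellation is what makes the equation semilinear with no first-order term, and it holds for every $t$.

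\emph{Step 2: substitution.} Use the sign convention $\Delta = d^*d$ (positive) on functions, for which $\Delta(\phi\circ f) = \phi'(f)\Delta f - \phi''(f)|\nabla f|^2$. For $u = e^{\frac{n-1}{2}f}$ this gives
$$\Delta u = u\Big(\tfrac{n-1}{2}\Delta f - \tfrac{(n-1)^2}{4}|\nabla f|^2\Big),
\qquad\text{so}\qquad
\tfrac{2t}{n-1}\Delta u = u\Big(t\Delta f - \tfrac{(n-1)t}{2}|\nabla f|^2\Big).$$
Since $e^{-f} = u^{-\frac{2}{n-1}}$, Step 1 becomes
$$\mu^t(e^f\omega) = u^{-\frac{n+1}{n-1}}\Big(\tfrac{2t}{n-1}\Delta u + \mu^t(\omega)\,u\Big).$$

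\emph{Step 3: the equivalence.} Given smooth $f$, the function $u$ is smooth and positive, and $\mu^t(e^f\omega)=c$ holds if and only if $u$ solves \eqref{eq:t-DY}. Conversely, a smooth positive solution $u$ yields the smooth function $f = \tfrac{2}{n-1}\log u$, and the same identity applies. The only step needing care is the coefficient bookkeeping in Step 1, together with keeping the sign convention of $\Delta$ consistent with the one used in Lemma \ref{lem:confchange}.
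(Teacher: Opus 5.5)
Your proposal is correct and matches the paper's proof: it also derives $e^{f}\mu^t(e^f\omega)=\mu^t(\omega)+t\Delta f-\frac{t(n-1)}{2}|\nabla f|^2$ from Lemma \ref{lem:confchange}, where the $g(\nabla f,\theta^\sharp)$ terms cancel, and then substitutes $f=\frac{2}{n-1}\log u$, which is just your change of variables run in the other direction. Your coefficient bookkeeping and your positive-Laplacian convention $\Delta=d^*d$ both agree with the paper's.
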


\begin{proof}
By Lemma \ref{lem:confchange} and Definition \ref{def:mu-t}, we have
\begin{equation} \label{eq:mu_t_conformal}
e^{f}\,\mu^t(\omega') = \mu^t(\omega) + t\Delta f -\frac{t(n-1)}{2} |\nabla f|^2 \,\, .
\end{equation}
Standard computations show that, for $f = \frac{2}{n-1}\log u$, we have
$$
\Delta f = \frac{2}{n-1}\left(\frac{\Delta u}{u}+\frac{|\nabla u|^2}{u^2}\right) \,\, , \quad
|\nabla f|^2 = \frac{4}{(n-1)^2}\frac{|\nabla u|^2}{u^2} \,\, .
$$
Substituting these identities into \eqref{eq:mu_t_conformal}, we obtain
$$
u^{\frac{2}{n-1}} \mu^t\left(u^{\frac{2}{n-1}}\omega\right)
=
\mu^t(\omega) + \frac{2t}{n-1}\frac{\Delta u}{u}
$$
and so it follows that $\mu^t(e^f\omega) = c$ if and only if $u = e^{\frac{n-1}{2}f}$ is a smooth positive solution of \eqref{eq:t-DY}.
\end{proof}

\begin{remark}\label{rmk:psi-a-b}
Notice that in the proof of Proposition \ref{prop:linear}, the gradient terms $g(\nabla f,\theta^\sharp)$ appearing in Lemma \ref{lem:confchange} cancel out in \eqref{eq:mu_t_conformal}. Indeed, if one considers the geometric quantity
$$
\psi^{a,b}(\omega) \coloneqq \mathrm{scal}^{\mathrm{Ch}}(\omega) +a\,d^*\theta +b\,|T|^2 \,\, ,
$$
then a straightforward computation based on Lemma \ref{lem:confchange} gives
\begin{multline*}
e^{f}\psi^{a,b}(e^{f}\omega) = \psi^{a,b}(\omega) +(n +(n-1)a)\Delta f + (n -(n-1)a +4b)g(\nabla f,\theta^\sharp) \\
-(n-1)((n-1)a +2b)|\nabla f|^2 \,\, .
\end{multline*}
The coefficients in \eqref{eq:mu_t} were chosen so that
$$
(n-1)a -4b = n
$$
because this condition leads to a variational formulation of \eqref{eq:t-DY} (see Theorem \ref{thm:variational}).
\end{remark}

\begin{remark} \label{rem:crit}
Notice that the exponent $\frac{n+1}{n-1}$ appearing in \eqref{eq:t-DY} can be rewritten as $q-1$, where
$$
q \coloneqq 2^* = \frac{2n}{n-1}
$$
is the critical exponent for the Sobolev embedding $W^{1,2}(\mathbb R^{2n})\hookrightarrow L^p(\mathbb R^{2n})$, see, \emph{e.g.}, \cite{MR0448404,MR0463908} or \cite[Theorem 3.3]{MR0888880}. Therefore, \eqref{eq:t-DY} is a critical elliptic PDE on a compact Riemannian manifold, and belongs to the class of equations considered by Aubin \cite[Chapter 5]{MR1636569} and Hebey--Vaugon \cite{MR1854089}, among others. Moreover, for $t=2n-1$, it coincides with the classical Yamabe equation
$$
\frac{2(2n-1)}{n-1}\Delta u+\mathrm{scal}(g)\, u = c\,u^{q-1} \,\, ,
$$
see, \emph{e.g.}, \cite[p.\ 38]{MR0888880}.
\end{remark}

\begin{remark}\label{rmk:weyl}
Note that a peculiar phenomenon occurs when $t=0$. Indeed, equation \eqref{eq:t-DY}
degenerates and reduces to
$$
c\, u^{\frac{2}{n-1}} = \mu^0(\omega) \,\, ,
$$
which admits a positive solution whenever $\mu^{0}(\omega)$ has a sign that agrees with the sign of $c$. For this reason, we consider \eqref{eq:t-DY} only for $t>0$. Nonetheless, the sign of the $0$-deformed scalar curvature $\mu^0$ plays a role in the existence problem for constant $t$-deformed scalar curvature metrics (see
Theorem \ref{thm:aubins_crit}). Moreover, if the metric $\omega$ is locally conformally K{\"a}hler, then $\mu^0(\omega)$ coincides with the scalar curvature of the \emph{Weyl connection} of $\omega$.
\end{remark}

Inspired by the classical Yamabe problem \cite{MR0125546} and the Chern--Yamabe problem \cite{MR3696598}, we propose the following question.

\begin{question}[Deformed scalar curvature Yamabe-type problem]
Let $(M^{2n}, J,\omega)$ be a compact Hermitian manifold of complex dimension $n$. Fix any $t>0$. Does there exist a positive smooth solution to \eqref{eq:t-DY}?
In other words, does the conformal class $\{\omega\}$ contain a metric with constant $t$-deformed scalar curvature?
\end{question}

As a consequence of Remark \ref{rem:crit}, many arguments used for the Yamabe equation can be adapted to our setting. For example, the following rigidity property holds.

\begin{proposition} \label{prop:rigidity}
Let $(M^{2n}, J)$ be a compact connected complex manifold, and fix $t>0$. Let also $\omega$ and $\omega'$ be two Hermitian metrics with constant $t$-deformed curvature that are conformal to each other. Then, either $\mu^{t}(\omega) = \mu^{t}(\omega') = 0$, or else $\mu^{t}(\omega)$ and $\mu^{t}(\omega')$ are both nonzero and have the same sign. Moreover, if $\mu^{t}(\omega) \leq 0$, then $\omega'$ is a constant rescaling of $\omega$.
\end{proposition}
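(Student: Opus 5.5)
Write $\omega' = u^{\frac{2}{n-1}}\omega$ with $u>0$ smooth, and set $c \coloneqq \mu^t(\omega)$ and $c' \coloneqq \mu^t(\omega')$, both constants. By Proposition \ref{prop:linear}, $u$ solves
$$
\frac{2t}{n-1}\Delta u + c\,u = c'\,u^{\frac{n+1}{n-1}} .
$$
The plan is to mimic the classical Yamabe rigidity argument: integrate this equation for the sign statement, then use the maximum principle for uniqueness.

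\textbf{Signs.} Integrating over $M$ with respect to $\omega^n/n!$ kills $\int_M \Delta u = 0$ and leaves
$$
c\int_M u = c'\int_M u^{\frac{n+1}{n-1}} .
$$
Both integrals are strictly positive because $u>0$. Hence $c=0$ if and only if $c'=0$, and otherwise $c$ and $c'$ have the same sign. The roles of $\omega$ and $\omega'$ are symmetric, so nothing more is needed for the first claim.

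\textbf{Rigidity when $c\le 0$.} If $c=0$, then $c'=0$ and the equation reduces to $\Delta u=0$. Since $M$ is compact and connected, $u$ is constant, so $\omega'$ is a constant rescaling of $\omega$.

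If $c<0$, then $c'<0$. At a maximum point $p$ of $u$ we have $\Delta u(p)\ge 0$ (this is the sign convention recalled in Section \ref{sec:preliminaries}), so
$$
c'\,u(p)^{\frac{n+1}{n-1}} \ge c\,u(p) .
$$
Dividing by $u(p)>0$ and by the negative number $c'$ gives $u(p)^{\frac{2}{n-1}} \le c/c'$. At a minimum point $q$ we have $\Delta u(q)\le 0$, and the same manipulation gives $u(q)^{\frac{2}{n-1}}\ge c/c'$. Therefore $\max u\le (c/c')^{\frac{n-1}{2}}\le \min u$, so $u$ is constant and $\omega'$ is homothetic to $\omega$.

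The only point needing care is keeping the inequality directions right when dividing by the negative constants $c$ and $c'$, together with the Laplacian sign convention $\Delta = d^*d$. Otherwise the argument is routine; the essential structural input is that Proposition \ref{prop:linear} yields a purely linear-plus-power equation with no first-order terms.
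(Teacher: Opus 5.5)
Your proof is correct and takes the same route as the paper. You integrate the equation from Proposition \ref{prop:linear} to compare the signs of $c$ and $c'$, then use $\Delta u\ge 0$ at a maximum and $\Delta u\le 0$ at a minimum to force $u$ to be constant when $c<0$ (and harmonicity when $c=0$). The only cosmetic difference is that the paper first rescales both metrics so that $c=c'=-1$, whereas you work directly with the ratio $c/c'$ to get $\max u\le (c/c')^{\frac{n-1}{2}}\le\min u$, which spares the normalisation step.
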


\begin{proof}
By assumption, there exists a positive smooth function $u$ on $M$ such that $\omega' = u^{\frac{2}{n-1}}\omega$. Let $c \coloneqq \mu^{t}(\omega)$ and $c' \coloneqq \mu^{t}(\omega')$. Then, by \eqref{eq:t-DY}, it follows that
\begin{equation} \label{eq:ll'}
\frac{2t}{n-1}\Delta u +c u = c' u^{q-1} \,\, ,
\end{equation}
where $\Delta$ is the Laplacian associated with $\omega$. Integrating \eqref{eq:ll'} over $M$ with respect to the volume form of $\omega$, we find
$$
c \int_M u\,\frac{\omega^n}{n!} = c' \int_M u^{q-1} \frac{\omega^n}{n!}  .
$$
Since $u>0$, it follows that either $c = c' = 0$, or they are both nonzero with the same sign.

Assume now that $c <0$. Then, up to rescaling both the metrics $\omega$ and $\omega'$ by positive constants, we can assume that $c = c' = -1$, and so \eqref{eq:ll'} reads as
\begin{equation} \label{eq:-1-1}
\frac{2t}{n-1}\Delta u -u = -u^{q-1} \,\, .
\end{equation}
Evaluating \eqref{eq:-1-1} at a minimum point $x_{\mathrm{min}} \in M$ of $u$, we get
$$
u(x_{\mathrm{min}})^{q-1} = -\frac{2t}{n-1}\Delta u(x_{\mathrm{min}}) +u(x_{\mathrm{min}}) \geq u(x_{\mathrm{min}})  .
$$
Since $q>2$, it follows that $u(x_{\mathrm{min}}) \geq 1$. Evaluating \eqref{eq:-1-1} at a maximum point $x_{\mathrm{max}} \in M$ of $u$, we get
$$
u(x_{\mathrm{max}})^{q-1} = -\frac{2t}{n-1}\Delta u(x_{\mathrm{max}}) +u(x_{\mathrm{max}}) \leq u(x_{\mathrm{max}}) \,\, ,
$$
which in turn implies that $u(x_{\mathrm{max}}) \leq 1$. Hence $u \equiv 1$, and so $\omega' = \omega$.

Finally, if $c = 0$, then \eqref{eq:ll'} reads as $\Delta u = 0$, and so $u$ is constant.
\end{proof}

Let us recall that, given a compact Hermitian manifold $(M^{2n},J,\omega)$, the \emph{Gauduchon degree} \cite{MR0633563} is the invariant of the conformal class $\{\omega\}$ defined by
\begin{equation} \label{eq:Gdeg}
\Gamma_{(M,J)}(\{\omega\}) \coloneqq \int_M \mathrm{scal}^{\mathrm{Ch}}(\eta)\frac{\eta^n}{n!} \,\, ,
\end{equation}
where $\eta \in \{\omega\}$ is the unique Gauduchon metric of volume $1$ in the conformal class of $\omega$. Note that it coincides, up to a positive constant, with the degree of the anti-canonical line bundle of $(M,J)$ with respect to $\eta$ (see, \emph{e.g.}, \cite{MR0633563, MR3696598}). The Gauduchon degree provides a simple obstruction to the existence of metrics with positive constant $t$-deformed scalar curvature when $0 < t \leq 2n$. Indeed, the following result holds.

\begin{proposition} \label{prop:Gdeg}
Let $(M^{2n},J,\omega)$ be a compact Hermitian manifold of complex dimension $n$. Let $0 < t \leq 2n$ and let $c \in \mathbb{R}$ be a constant. Assume that there exists $\omega' \in \{\omega\}$ such that $\mu^t(\omega') = c$. If $\Gamma_{(M,J)}(\{\omega\})  \leq 0$, then $c \leq 0$. Moreover,
\begin{equation} \label{eq:cneg}
c\,\mathrm{Vol}(M,g_{\omega'})^{\frac{1}{n}}\leq \Gamma_{(M,J)}(\{\omega\}) \quad \text{if $c\leq 0$}\,\, .
\end{equation}
\end{proposition}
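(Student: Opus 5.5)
The plan is to compare $\omega'$ directly with the Gauduchon representative of the conformal class, using the conformal transformation law \eqref{eq:mu_t_conformal} established in the proof of Proposition \ref{prop:linear}. Let $\eta\in\{\omega\}$ be the unique Gauduchon metric of volume $1$, and write $\omega' = e^{f}\eta$ for a smooth function $f$.

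\textbf{Step 1: integrate the transformation law.} Applying \eqref{eq:mu_t_conformal} with $\eta$ as the reference metric gives
$$
c\,e^{f} = e^{f}\mu^t(\omega') = \mu^t(\eta) + t\,\Delta_\eta f - \frac{t(n-1)}{2}|\nabla f|_\eta^2 .
$$
I integrate this identity against $\eta^n/n!$. The Laplacian term integrates to zero. Since $t>0$, the gradient term is non-positive. This yields
$$
c\int_M e^{f}\,\frac{\eta^n}{n!} \le \int_M \mu^t(\eta)\,\frac{\eta^n}{n!}.
$$

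\textbf{Step 2: compare $\int_M\mu^t(\eta)$ with the Gauduchon degree.} By Definition \ref{def:mu-t},
$$
\int_M \mu^t(\eta)\frac{\eta^n}{n!} = \Gamma_{(M,J)}(\{\omega\}) + \frac{t-n}{n-1}\int_M d^*\theta_\eta\,\frac{\eta^n}{n!} + \frac{t-2n}{4}\int_M |T_\eta|^2\frac{\eta^n}{n!}.
$$
The middle term vanishes, and in fact already pointwise since $\eta$ is Gauduchon, so $d^*\theta_\eta=0$. The last term is $\le 0$ exactly because $t\le 2n$; this is the only place where the hypothesis $t\le 2n$ enters. Combining with Step 1 gives the key inequality
$$
c\int_M e^{f}\,\frac{\eta^n}{n!} \le \Gamma_{(M,J)}(\{\omega\}).
$$

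\textbf{Step 3: conclude.} For the first claim, if $\Gamma_{(M,J)}(\{\omega\})\le 0$ and $c>0$, then the left-hand side of the key inequality is strictly positive, a contradiction; hence $c\le 0$. For \eqref{eq:cneg}, assume $c\le 0$. Since $\omega'^n = e^{nf}\eta^n$, we have $\mathrm{Vol}(M,g_{\omega'})=\int_M e^{nf}\eta^n/n!$. Hölder's inequality with exponents $n$ and $\frac{n}{n-1}$, together with $\mathrm{Vol}(M,\eta)=1$, gives
$$
\int_M e^{f}\frac{\eta^n}{n!} \le \left(\int_M e^{nf}\frac{\eta^n}{n!}\right)^{1/n} = \mathrm{Vol}(M,g_{\omega'})^{\frac1n}.
$$
Multiplying by $c\le 0$ reverses this inequality, so $c\,\mathrm{Vol}(M,g_{\omega'})^{1/n}\le c\int_M e^{f}\eta^n/n!\le\Gamma_{(M,J)}(\{\omega\})$, which is \eqref{eq:cneg}.

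\textbf{Main obstacle.} The only delicate point is choosing the right reference metric and the right integrand. A naive attempt, integrating $\mathrm{scal}^{\mathrm{Ch}}(\omega')$ with weights via \eqref{eq:confscalCh}, produces cross terms $g(\nabla f,\theta^\sharp)$ and a $d^*\theta'$ term whose sign is unclear when $t<n$. Using \eqref{eq:mu_t_conformal}, where these cross terms have already cancelled (Remark \ref{rmk:psi-a-b}), with the Gauduchon metric as base avoids this. I would double-check the sign conventions in \eqref{eq:mu_t_conformal}, in particular that the gradient term enters with coefficient $-\frac{t(n-1)}{2}$, since the whole argument rests on that sign.
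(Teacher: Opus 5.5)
Your proposal is correct and follows essentially the same route as the paper: take the volume-one Gauduchon representative $\eta$, integrate the conformal law \eqref{eq:mu_t_conformal}, use $d^*\theta_\eta=0$ and $\frac{t-2n}{4}|T|^2\le 0$ to bound $\int_M\mu^t(\eta)\,\frac{\eta^n}{n!}$ by $\Gamma_{(M,J)}(\{\omega\})$, and conclude with H\"older. Your version is slightly more careful than the paper's: you keep the key step as an inequality, whereas the paper displays it as an equality with $\Gamma_{(M,J)}(\{\omega\})$, which is exact only when $t=2n$.
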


\begin{proof}
Let $\eta \in \{\omega\}$ the unique Gauduchon metric of volume $1$ in the conformal class of $\omega$ and let $f: M \to \mathbb{R}$ be a smooth function such that $\mu^{t}(e^f\eta) = c$. Since $\eta$ is Gauduchon and $0 < t \leq 2n$, it follows that
$$
\mu^{t}(\eta) = \mathrm{scal}^{\mathrm{Ch}}(\eta) +\frac{t-2n}{4}|T|^2 \leq \mathrm{scal}^{\mathrm{Ch}}(\eta) \,\, .
$$
Moreover, by \eqref{eq:mu_t_conformal}, we get
$$
c\,e^{f} = \mu^t(\eta) + t\Delta f -\frac{t(n-1)}{2} |\nabla f|^2
$$
and so, integrating over $M$ with respect
to the volume form of $\eta$, by \eqref{eq:Gdeg} we obtain
$$
c\int_M e^{f}\frac{\eta^n}{n!} = \Gamma_{(M,J)}(\{\omega\}) -\frac{t(n-1)}{2}\|\nabla f\|_{L^2}^2 \,\, .
$$
Therefore, if $\Gamma_{(M,J)}(\{\omega\}) \leq 0$, it follows that $c \leq 0$. Furthermore, since $\eta$ has unit volume, the H{\"o}lder inequality implies that
$$
\int_M e^{f}\frac{\eta^n}{n!} \leq \left(\int_M e^{nf}\frac{\eta^n}{n!}\right)^{\frac{1}{n}} = \mathrm{Vol}(M,g_{\omega'})^{\frac{1}{n}} \,\, ,
$$
and so \eqref{eq:cneg} holds.
\end{proof}

\begin{remark} \label{rmk:Gauduchon_deg}
As a consequence of Proposition \ref{prop:Gdeg}, the expected volume-normalized constant $t$-deformed scalar curvature is bounded from above by the Gauduchon degree on compact complex manifolds with non-negative Kodaira dimension when $0< t \leq 2n$ (see \cite{MR0633563}).
\end{remark}

\subsection{The deformed Yamabe problem as an Euler--Lagrange equation} \hfill \par

We now show that the constant $t$-deformed scalar curvature equation
$$
\mu^t(\omega) = c
$$
admits a variational interpretation as the Euler--Lagrange equation of a suitable functional. To this aim, we define the \emph{$t$-deformed Einstein--Hilbert functional}
\begin{equation} \label{eq:EH-t}
\mathrm{EH}^{t}(\omega) \coloneqq \frac{\int_M\mu^{t}(\omega)\frac{\omega^n}{n!}}{\left(\int_M\frac{\omega^n}{n!}\right)^{\frac{n-1}{n}}} \,\, ,
\end{equation}
where $\omega$ is any Hermitian metric on $(M^{2n},J)$. If we fix a reference metric $\omega$ and we restrict the $t$-deformed Einstein--Hilbert functional to its conformal class
$$
\{\omega\} \coloneqq \left\{ u^{\frac{2}{(n-1)}}\omega : u \in \mathcal{C}^{\infty}(M,\mathbb{R}) \, , \,\, u>0 \right\} \,\, ,
$$
then a direct computation shows that the map $\mathrm{EH}^{t}\big|_{\{\omega\}} : \{\omega\} \to \mathbb{R}$ takes the form
\begin{equation} \label{eq:EHt(u)}
\mathrm{EH}_{\omega}^{t}(u) \coloneqq
\mathrm{EH}^{t}\left(u^{\frac{2}{n-1}}\omega \right) = \|u\|^{-2}_{L^q}\int_M\left( u^2\mu^{t}(\omega) +\frac{2t}{n-1}|\nabla u|^2\right)\,\frac{\omega^n}{n!} \,\, .
\end{equation}
In the following, whenever no confusion can arise, we shall omit the reference metric $\omega$ from the notation $\mathrm{EH}_{\omega}^{t}(u)$. The role of the functional $\mathrm{EH}^{t}$ is clarified by the following result.

\begin{theorem} \label{thm:variational}
Let $(M^{2n},J,\omega)$ be a compact Hermitian manifold of complex dimension $n$. Then equation \eqref{eq:t-DY} is the Euler--Lagrange equation of the $t$-deformed Einstein--Hilbert functional $\mathrm{EH}^t$ defined in \eqref{eq:EH-t}, restricted to the conformal class of $\omega$.
\end{theorem}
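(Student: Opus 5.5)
The plan has two steps. First I verify formula \eqref{eq:EHt(u)} for the restricted functional. Then I compute its first variation in $u$ and identify the critical point equation with \eqref{eq:t-DY}, up to the value of the constant $c$.

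\textbf{Step 1: the restricted functional.} Set $\omega' = u^{\frac{2}{n-1}}\omega$ with $u>0$ smooth. Then
$$\omega'^n = u^{\frac{2n}{n-1}}\omega^n = u^q\,\omega^n,$$
so the denominator of $\mathrm{EH}^t(\omega')$ is $\|u\|_{L^q}^{q(n-1)/n} = \|u\|_{L^q}^2$. By the identity in the proof of Proposition \ref{prop:linear},
$$\mu^t(\omega') = u^{-\frac{2}{n-1}}\Big(\mu^t(\omega) + \tfrac{2t}{n-1}\tfrac{\Delta u}{u}\Big).$$
Since $q - \frac{2}{n-1} - 1 = 1$, the numerator becomes
$$\int_M \Big(\mu^t(\omega)\,u^2 + \tfrac{2t}{n-1}\,u\,\Delta u\Big)\frac{\omega^n}{n!}.$$
Integrating by parts with $\Delta = d^*d$ on functions turns $\int_M u\,\Delta u$ into $\int_M |\nabla u|^2$. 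This gives \eqref{eq:EHt(u)}, which is a smooth functional on the open cone of positive functions.

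\textbf{Step 2: the first variation.} Take $u>0$ and $v\in\mathcal C^\infty(M)$, and write
$$E(u) = \int_M \Big(\tfrac{2t}{n-1}|\nabla u|^2 + \mu^t(\omega)u^2\Big)\frac{\omega^n}{n!}.$$
Differentiating at $s=0$ along $u+sv$, and integrating by parts again, gives
$$\frac{d}{ds}\Big|_{s=0} E(u+sv) = 2\int_M v\Big(\tfrac{2t}{n-1}\Delta u + \mu^t(\omega)u\Big)\frac{\omega^n}{n!},$$
$$\frac{d}{ds}\Big|_{s=0} \|u+sv\|_{L^q}^{-2} = -2\,\|u\|_{L^q}^{-2-q}\int_M u^{q-1}v\,\frac{\omega^n}{n!}.$$
By the quotient rule, $u$ is critical for $\mathrm{EH}^t_\omega$ if and only if, for every $v$,
$$\int_M v\Big(\tfrac{2t}{n-1}\Delta u + \mu^t(\omega)u - \mathrm{EH}^t_\omega(u)\,\|u\|_{L^q}^{2-q}\,u^{q-1}\Big)\frac{\omega^n}{n!} = 0.$$
Since $v$ is arbitrary, this is exactly \eqref{eq:t-DY} with
$$c = \mathrm{EH}^t_\omega(u)\,\|u\|_{L^q}^{2-q},\qquad q-1 = \tfrac{n+1}{n-1}.$$

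\textbf{Step 3: the constant $c$.} Conversely, suppose $u$ solves \eqref{eq:t-DY} for some constant $c$. Multiplying by $u$ and integrating shows $E(u) = c\,\|u\|_{L^q}^q$. Hence $c$ is forced to equal the value obtained in Step 2, so every solution is a critical point. Moreover, the functional is invariant under $u\mapsto \lambda u$ for constants $\lambda>0$, so we may normalize $\|u\|_{L^q}=1$; then $c$ equals the critical value of $\mathrm{EH}^t$. By Proposition \ref{prop:linear}, critical points therefore correspond exactly to metrics in $\{\omega\}$ with constant $t$-deformed scalar curvature.

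No step is a real obstacle. The only points needing care are the exponent bookkeeping in Step 1, namely checking that the Chern-connection torsion terms in $\mu^t$ rearrange so that $\mu^t(\omega')$ multiplied by the volume density produces exactly $u^2\mu^t(\omega) + \frac{2t}{n-1}u\,\Delta u$, and the sign convention for $\Delta$ used in the integration by parts. The first of these is already guaranteed by Proposition \ref{prop:linear}.
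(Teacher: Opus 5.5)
Your proposal is correct and follows the same route as the paper: you differentiate $\|u\|_{L^q}^{-2}\int_M\bigl(\mu^t(\omega)u^2+\frac{2t}{n-1}|\nabla u|^2\bigr)\frac{\omega^n}{n!}$ with the quotient rule and read off \eqref{eq:t-DY} with $c=\boldsymbol{\mu}_{\mathrm{tot}}^t(u)/\|u\|_{L^q}^q$. Your Steps 1 and 3 also make explicit two points the paper leaves implicit: the derivation of \eqref{eq:EHt(u)} from the identity in the proof of Proposition \ref{prop:linear}, and the fact that a solution of \eqref{eq:t-DY} for any $c$ forces $c$ to equal that value, which gives the converse direction of the equivalence.
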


\begin{proof}
For notational convenience, we set 
\begin{equation} \label{eq:mutot}
\boldsymbol{\mu}_{\mathrm{tot}}^t(u) \coloneqq \int_M\left( u^2\mu^{t}(\omega)+\frac{2t}{n-1}|\nabla u|^2\right)\,\frac{\omega^n}{n!} \,\, ,
\end{equation}
so that \eqref{eq:EHt(u)} takes the form
\begin{equation} \label{eq:EHt(u)2}
\mathrm{EH}^t(u) = \|u\|^{-2}_{L^q}\,\boldsymbol{\mu}_{\mathrm{tot}}^t(u) \,\, .
\end{equation}
Let $u_\varepsilon$ be a smooth path of smooth, real-valued, positive functions on $M$, with $u \coloneqq u_{\varepsilon=0}$ and $\dot u \coloneqq \frac{d}{d\varepsilon}|_{\varepsilon=0}u_\varepsilon$. A direct computation shows that
\begin{align*}
\frac{d}{d\varepsilon}\Big\vert_{\varepsilon=0} \|u_\varepsilon\|_{L^q} &=
\|u_\varepsilon\|_{L^q}^{1-q} \int_M u^{q-1}\,\dot{u}\,\frac{\omega^n}{n!} \,\, , \\
\frac{d}{d\varepsilon}\Big\vert_{\varepsilon=0} \boldsymbol{\mu}_{\mathrm{tot}}^t(u_\varepsilon) &=
2\int_M \left(\frac{2t}{n-1}\Delta u +\mu^t(\omega)\,u\right)\dot{u}\,\frac{\omega^n}{n!} \,\, .
\end{align*}
Hence, the first variation of the functional \eqref{eq:EHt(u)2} is computed as
\begin{align*}
\frac{d}{d\varepsilon}\Big\vert_{\varepsilon=0}\mathrm{EH}^t (u_\varepsilon) &= \|u\|_{L^q}^{-2}\left(\frac{d}{d\varepsilon}\Big\vert_{\varepsilon=0} \boldsymbol{\mu}_{\mathrm{tot}}^t(u_\varepsilon)\right)
-2\|u\|_{L^q}^{-3}\left(\frac{d}{d\varepsilon}\Big\vert_{\varepsilon=0} \|u_\varepsilon\|_{L^q}\right)\boldsymbol{\mu}_{\mathrm{tot}}^t(u) \\
&= \frac{2}{\|u\|_{L^q}^2}\int_M \left(\frac{2t}{n-1}\Delta u+\mu^t(\omega)\,u -\frac{\boldsymbol{\mu}_{\mathrm{tot}}^t(u)}{\|u\|_{L^q}^q}\,u^{q-1}\right)\dot{u}\,\frac{\omega^n}{n!} \,\, .
\end{align*}
Therefore, it follows that $\frac{d}{d\varepsilon}\big\vert_{\varepsilon=0}\mathrm{EH}^t (u_\varepsilon) = 0$ for all possible choices of $\dot{u}$ if and only if $u$ satisfies \eqref{eq:t-DY}.
\end{proof}

\begin{remark} \label{rem:lambda_value}
As a byproduct of the proof of Theorem \ref{thm:variational}, the expected value of $c$ in equation \eqref{eq:t-DY} is given by
$$
c = \frac{\boldsymbol{\mu}_{\mathrm{tot}}^t(u)}{\|u\|_{L^q}^q} \,\, ,
$$
where $\boldsymbol{\mu}_{\mathrm{tot}}^t(u)$ is defined in \eqref{eq:mutot}.
\end{remark}

The exponent $q$ in Equation \eqref{eq:t-DY} is of crucial importance (see Remark \ref{rem:crit}), and the variational problem at the critical exponent $q$ is substantially more delicate than its subcritical analogues. The following classical result illustrates this phenomenon.

\begin{proposition}[\cite{MR0125546}; see also \cite{MR0888880}, Proposition 4.2]\label{prop:minimizer}
Let $(M^{2n},J,\omega)$ be a compact Hermitian manifold of complex dimension $n$. For $2 \leq p \leq \infty$, consider the functional
$$ \mathrm{EH}^t_p(u) \coloneqq \frac{\boldsymbol{\mu}_{\mathrm{tot}}^t(u)}{\|u\|_{L^p}^2} , $$
defined for $u \in \mathcal{C}^{\infty}(M,\mathbb{R})$ positive.
Then the functional $\mathrm{EH}^t_p$ is bounded from below, namely
$$
\mathrm{EH}^t_p(u) \geq - \|\mu^t(\omega)\|_{L^{\frac{p}{p-2}}} \,\, ,
$$
for every $2 \leq p \leq \infty$ and for every $t > 0$. Moreover, when the exponent $p$ satisfies
$$
2 \leq p < q = \frac{2n}{n-1} \,\, ,
$$
the functional $\mathrm{EH}^t_p$ admits a minimum.
\end{proposition}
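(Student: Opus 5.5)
The plan is to prove the lower bound first, by discarding the gradient term and applying H\"older's inequality, and then to obtain existence in the subcritical range by the direct method of the calculus of variations.

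\emph{Lower bound.} Since $t>0$ and $n\geq 2$, the gradient term in $\boldsymbol{\mu}_{\mathrm{tot}}^t(u)$ is non-negative, so
$$
\boldsymbol{\mu}_{\mathrm{tot}}^t(u)\geq \int_M u^2\mu^t(\omega)\,\frac{\omega^n}{n!}\geq -\int_M u^2\,|\mu^t(\omega)|\,\frac{\omega^n}{n!} .
$$
Apply H\"older's inequality with the conjugate exponents $\frac{p}{2}$ and $\frac{p}{p-2}$:
$$
\int_M u^2|\mu^t(\omega)|\,\frac{\omega^n}{n!}\leq \|u^2\|_{L^{p/2}}\,\|\mu^t(\omega)\|_{L^{\frac{p}{p-2}}}=\|u\|_{L^p}^2\,\|\mu^t(\omega)\|_{L^{\frac{p}{p-2}}} .
$$
Dividing by $\|u\|_{L^p}^2$ gives the bound. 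The endpoint cases need a remark. For $p=2$ we use the $L^\infty$ norm of $\mu^t(\omega)$. For $p=\infty$ the conjugate exponent is $1$. In both cases the bound holds because $\mu^t(\omega)$ is smooth on a compact manifold.

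\emph{Existence of a minimiser for $2\leq p<q$.} Set $m\coloneqq\inf \mathrm{EH}^t_p$, which is finite by the first step. Take a minimising sequence, normalised so that $\|u_k\|_{L^p}=1$. One may replace $u_k$ by $|u_k|$ and extend the functional to $W^{1,2}$ by density. Then $\boldsymbol{\mu}_{\mathrm{tot}}^t(u_k)$ is bounded. Moreover,
$$
\int u_k^2|\mu^t|\leq \|\mu^t\|_{L^\infty}\|u_k\|_{L^2}^2\leq C\|u_k\|_{L^p}^2=C,
$$
so $\frac{2t}{n-1}\|\nabla u_k\|_{L^2}^2$ is bounded, and $(u_k)$ is bounded in $W^{1,2}$. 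Passing to a subsequence, $u_k\rightharpoonup u$ weakly in $W^{1,2}$. By Rellich--Kondrachov, the embedding $W^{1,2}\hookrightarrow L^p$ is compact exactly because $p<q=2^*$, so $u_k\to u$ strongly in $L^p$ and in $L^2$. Hence $\|u\|_{L^p}=1$, and $\int u_k^2\mu^t\to \int u^2\mu^t$. Weak lower semicontinuity of the Dirichlet energy then gives $\mathrm{EH}^t_p(u)\leq m$, so $u\geq 0$ is a minimiser in $W^{1,2}$.

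\emph{Regularity and positivity.} The minimiser weakly solves the subcritical equation
$$
\frac{2t}{n-1}\Delta u+\mu^t u=m\,u^{p-1} .
$$
Since $p-1<q-1$, a bootstrap argument (Trudinger's $L^s$ iteration, followed by Schauder estimates) gives $u\in\mathcal C^{2,\alpha}$ and hence smoothness. The strong maximum principle, applied to $\Delta u+(\text{bounded})\,u\geq 0$ with $u\geq 0$ and $u\not\equiv0$, gives $u>0$. So the minimum is attained on smooth positive functions.

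The main obstacle is this regularity step. Bootstrapping for a nonlinear equation needs care, but in the subcritical case it follows Yamabe and Trudinger verbatim; the paper cites \cite[Proposition 4.2]{MR0888880} for it. The compactness step is where $p<q$ is essential: at $p=q$ the embedding is not compact and the argument fails.
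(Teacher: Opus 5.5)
Your proof is correct and follows the classical Yamabe/Lee--Parker argument that the paper cites instead of giving its own proof: H\"older's inequality for the lower bound, then the direct method with Rellich--Kondrachov compactness for $p<q$, then elliptic regularity and the strong maximum principle. One point to tidy at the end: $u^{p-1}$ need not be smooth where $u$ vanishes, so first obtain $u\in\mathcal{C}^{2,\alpha}$, then $u>0$ from the strong maximum principle, and only then bootstrap to $\mathcal{C}^{\infty}$.
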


\section{Deformed Yamabe invariants} \label{sec:invariant}
\setcounter{equation} 0

\subsection{The deformed Yamabe invariants} \hfill \par

Thanks to the first part of Proposition \ref{prop:minimizer}, we can define the \emph{deformed Yamabe invariants} as follows.

\begin{definition}\label{def:lambda-t}
Let $(M^{2n},J)$ be a compact complex manifold of complex dimension $n$. For every Hermitian metric $\omega$ on $(M^{2n},J)$ and for every $t > 0$, we define the \emph{$t$-deformed Yamabe invariant of $\{\omega\}$} by
\begin{equation}\label{eq:Yam_inv}
\lambda_{(M,J)}^{t}(\{\omega\}) \coloneqq \inf \left\{\mathrm{EH}^{t}(\omega') : \omega' \in \{\omega\}\right\} \,\, ,
\end{equation}
where $\mathrm{EH}^{t}$ is the $t$-deformed Einstein--Hilbert functional as in \eqref{eq:EH-t}.
\end{definition}

The deformed Yamabe invariants are related to the Gauduchon degree and to the classical Yamabe invariant. First, arguing as in the proof of
\cite[Theorem 1.2]{LLS25}, one obtains the following properties.

\begin{proposition} \label{prop:Gaud_Yamabe}
Let $(M^{2n},J,\omega)$ be a compact Hermitian manifold. Then
\begin{equation} \label{eq:lambda_gaud_ineq}
\lambda^{t}_{(M,J)}(\{\omega\}) \leq \Gamma_{(M,J)}(\{\omega\}) \quad \text{for every $0 < t \leq 2n$} \,\, .
\end{equation}
Moreover, if $\eta \in \{\omega\}$ is the Gauduchon metric of volume $1$ in the conformal class of $\omega$ and $t=2n$, the following hold.
\begin{itemize}
\item[i)] If $\lambda^{2n}_{(M,J)}(\{\omega\}) = \Gamma_{(M,J)}(\{\omega\})$, then $\mathrm{scal}^{\mathrm{Ch}}(\eta)$ is constant.
\item[ii)] When $\lambda^{2n}_{(M,J)}(\{\omega\}) \leq 0$, for instance when $\Gamma_{(M,J)}(\{\omega\}) \leq 0$, the converse also holds, namely: $\lambda^{2n}_{(M,J)}(\{\omega\}) = \Gamma_{(M,J)}(\{\omega\})$ if and only if $\mathrm{scal}^{\mathrm{Ch}}(\eta)$ is constant.
\end{itemize}
\end{proposition}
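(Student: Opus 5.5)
Test the functional on the Gauduchon representative. Let $\eta\in\{\omega\}$ be the Gauduchon metric of volume $1$, and use it as reference metric, which is legitimate since $\lambda^t$ depends only on the conformal class. Since $d^*\theta_\eta=0$, we have
$$\mu^t(\eta)=\mathrm{scal}^{\mathrm{Ch}}(\eta)+\frac{t-2n}{4}|T_\eta|^2\le \mathrm{scal}^{\mathrm{Ch}}(\eta)\qquad\text{for } 0<t\le 2n.$$
Taking $u\equiv 1$ in \eqref{eq:EHt(u)} with unit volume gives
$$\lambda^t_{(M,J)}(\{\omega\})\le \mathrm{EH}^t_\eta(1)=\int_M\mu^t(\eta)\,\frac{\eta^n}{n!}\le \Gamma_{(M,J)}(\{\omega\}),$$
which is \eqref{eq:lambda_gaud_ineq}.

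For i), set $t=2n$. Then $\mu^{2n}(\eta)=\mathrm{scal}^{\mathrm{Ch}}(\eta)$ and $\mathrm{EH}^{2n}_\eta(1)=\Gamma$. If $\lambda^{2n}=\Gamma$, the constant function $1$ attains the infimum, so it is a critical point of $\mathrm{EH}^{2n}_\eta$. By Theorem \ref{thm:variational} and Remark \ref{rem:lambda_value}, $u\equiv1$ then solves \eqref{eq:t-DY}, which reads $\mu^{2n}(\eta)=c$. Hence $\mathrm{scal}^{\mathrm{Ch}}(\eta)$ is constant. The only care needed is that the infimum is taken over smooth positive functions, and $1$ is one of them, so the first variation along every smooth direction $\dot u$ must vanish.

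For ii), assume $\mathrm{scal}^{\mathrm{Ch}}(\eta)\equiv c$ is constant, so that $\mu^{2n}(\eta)=c=\Gamma$. We must show $\mathrm{EH}^{2n}_\eta(u)\ge c$ for all $u>0$, under the hypothesis $\lambda^{2n}\le0$. Dropping the gradient term,
$$\mathrm{EH}^{2n}_\eta(u)\ge c\,\frac{\|u\|_{L^2}^2}{\|u\|_{L^q}^2}.$$
Since $\mathrm{Vol}(\eta)=1$ and $q>2$, Hölder gives $\|u\|_{L^2}\le\|u\|_{L^q}$. When $c\le 0$ this yields $\mathrm{EH}^{2n}_\eta(u)\ge c$, hence $\lambda^{2n}\ge\Gamma$, and combined with the first part, equality holds.

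It remains to check that we are in the case $c\le0$. The hypothesis $\lambda^{2n}\le 0$ alone does not force $\Gamma\le0$, so this is the delicate point. Suppose $c>0$. Then $\mathrm{EH}^{2n}_\eta(u)>0$ for every $u$, since both terms of $\boldsymbol{\mu}_{\mathrm{tot}}^{2n}(u)$ are nonnegative and not both zero. It follows that $\lambda^{2n}\ge0$, and $\lambda^{2n}=0$ would require a minimising sequence with $\|u\|_{L^2}/\|u\|_{L^q}\to0$ and $\|\nabla u\|_{L^2}/\|u\|_{L^q}\to0$. By the Sobolev inequality $\|u\|_{L^q}^2\le A\|\nabla u\|_{L^2}^2+B\|u\|_{L^2}^2$, this is impossible, so $\lambda^{2n}>0$. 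Therefore $\lambda^{2n}\le0$ forces $c\le 0$, and the argument above applies. The converse direction in ii) is exactly i).
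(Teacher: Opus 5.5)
Your proof is correct. The inequality and part i) follow the paper exactly: test $u\equiv 1$ on the Gauduchon representative, then note that an interior minimiser is a critical point and apply Theorem~\ref{thm:variational}.

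For the converse in ii) you take a genuinely different route.

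\emph{The paper's route.} The paper just says the claim follows from the uniqueness in Proposition~\ref{prop:rigidity}. Spelled out, this tacitly uses existence. Since $\lambda^{2n}\le 0<2n\Lambda_n$, Theorem~\ref{thm:existence_subcritical} (Aubin) gives a smooth minimiser $\omega_0\in\{\omega\}$ with constant $\mu^{2n}(\omega_0)\le 0$. Rigidity then forces the constant-$\mu^{2n}$ metric $\eta$ to have non-positive curvature and to be a rescaling of $\omega_0$. Hence $\lambda^{2n}=\mathrm{EH}^{2n}(\eta)=\Gamma$, which is essentially Corollary~\ref{cor:Gamma}.

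\emph{Your route.} You bound the functional directly. For $c\le 0$, dropping the gradient term and applying H\"older on a unit-volume manifold gives $\mathrm{EH}^{2n}_\eta\ge c$. This is just the lower bound of Proposition~\ref{prop:minimizer} with $p=q$, since $\|\mu^{2n}(\eta)\|_{L^n}=|c|$. You rule out $c>0$ under $\lambda^{2n}\le 0$ via the Sobolev inequality $\|u\|_{L^q}^2\le A\|\nabla u\|_{L^2}^2+B\|u\|_{L^2}^2$. The paper handles this instead through the sign statement in the rigidity proposition.

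\emph{Comparison.} Your argument is self-contained and avoids both Aubin's existence theorem and the maximum principle. It also shows that, when $\mathrm{scal}^{\mathrm{Ch}}(\eta)$ is constant, $\lambda^{2n}\le 0$ is equivalent to $\Gamma\le 0$. The paper's route, in exchange, places the statement within the existence/uniqueness framework and additionally shows that every minimiser in the class is a constant rescaling of $\eta$.
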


\begin{proof}
Inequality \eqref{eq:lambda_gaud_ineq} follows directly from \eqref{eq:mu_t}, \eqref{eq:EHt(u)}, \eqref{eq:Gdeg} and \eqref{eq:Yam_inv}. Indeed, choosing $\eta$ as reference metric and noticing that $\mu^t(\eta) \leq \mathrm{scal}^{\mathrm{Ch}}(\eta)$ for $0 < t\leq 2n$, we obtain
$$
\lambda^{t}_{(M,J)}(\{\omega\}) \leq
\mathrm{EH}^{t}(1) =
\frac{\int_M \mu^{t}(\eta)
\frac{\eta^n}{n!}}{(\int_M \frac{\eta^n}{n!})^{\frac{2}{q}}} \leq
\int_M \mathrm{scal}^{\mathrm{Ch}}(\eta)\,\frac{\eta^n}{n!} =
\Gamma_{(M,J)}(\{\omega\}) \,\, .
$$
We now set $t=2n$ and notice that $\mu^{2n}(\eta) = \mathrm{scal}^{\mathrm{Ch}}(\eta)$. By the definition of $\lambda^{2n}_{(M,J)}(\{\omega\})$ in \eqref{eq:Yam_inv}, if 
$\lambda^{2n}_{(M,J)}(\{\omega\}) = \Gamma_{(M,J)}(\{\omega\})$, then $\eta$ realizes the infimum of $\mathrm{EH}^{2n}$ in the conformal class $\{\omega\}$. In particular, $\eta$ is a critical point of $\mathrm{EH}^{2n}$ and, by Theorem \ref{thm:variational}, $u=1$ solves \eqref{eq:t-DY} for $t=2n$ with respect to the form $\eta$. This is equivalent to saying that $\mathrm{scal}^{\mathrm{Ch}}(\eta)$ is constant.

The second claim follows from the uniqueness statement in the non-positive case, established in Lemma \ref{prop:rigidity}.
\end{proof}

\begin{remark}
Note that, in general, the Gauduchon degree $\Gamma_{(M,J)}(\{\omega\})$ does not dominate the $t$-deformed Yamabe invariant $\lambda_{(M,J)}^{t}(\{\omega\})$ for $t>2n$, due to the presence of terms involving $|T|^2$.
\end{remark}

Secondly, in the \emph{globally conformally K{\"a}hler} case, one can obtain explicit relations between the deformed Yamabe invariants and the classical Riemannian Yamabe invariant.

\begin{proposition} \label{prop:bound-classical-yamabe}
Let $(M^{2n},J,g)$ be a compact K{\"a}hler manifold. Let $\omega$ be the corresponding fundamental $2$-form and denote by $\lambda^{\mathrm{Yam}}_M(\{g\})$ the classical Yamabe invariant of the conformal class $\{g\}$. Then:
\begin{align}
&\lambda^{\mathrm{Yam}}_M(\{g\}) \leq \lambda^{t}_{(M,J)}(\{\omega\})  \quad \text{for every $t \geq 2n-1$} , \label{eq:lambda_Yam1} \\
&\lambda^{\mathrm{Yam}}_M(\{g\}) \geq \lambda^{t}_{(M,J)}(\{\omega\})  \quad \text{for every $0 < t \leq 2n-1$} . \label{eq:lambda_Yam2}
\end{align}
Moreover, if $g$ is a Yamabe minimiser in its conformal class, for instance if $g$ is K{\"a}hler--Einstein, then
\begin{equation} \label{eq:lambda_Yam3}
\lambda^{\mathrm{Yam}}_M(\{g\}) = \lambda^{t}_{(M,J)}(\{\omega\}) \quad \text{for every $2n-1 \leq t \leq 2n$} \,\, .
\end{equation}
\end{proposition}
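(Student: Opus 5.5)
The plan is to compare the two functionals pointwise on the conformal class, using the fact that the Kähler metric $g$ is a convenient reference point. First I record the relevant quantities at $g$. Since $\omega$ is Kähler, $T=0$ and $\theta=0$, so $\mu^t(\omega)=\mathrm{scal}^{\mathrm{Ch}}(\omega)=\mathrm{scal}(g)$ for every $t$. By \eqref{eq:EHt(u)} the restricted functional is then
$$
\mathrm{EH}^t_\omega(u)=\|u\|_{L^q}^{-2}\int_M\Big(\mathrm{scal}(g)\,u^2+\tfrac{2t}{n-1}|\nabla u|^2\Big)\frac{\omega^n}{n!} .
$$
For $t=2n-1$ this is exactly the classical Yamabe functional of $g$, as noted in Remark \ref{rem:crit}. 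Hermitian metrics conformal to $\omega$ are precisely the Riemannian metrics conformal to $g$. Hence $\lambda^{2n-1}_{(M,J)}(\{\omega\})=\lambda^{\mathrm{Yam}}_M(\{g\})$, with both infima taken over positive smooth $u$.

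Next I compare across $t$. For fixed $u$, the map $t\mapsto\mathrm{EH}^t_\omega(u)$ is nondecreasing, because its $t$-dependence is $\frac{2t}{n-1}\|\nabla u\|_{L^2}^2\,\|u\|_{L^q}^{-2}$ with a nonnegative coefficient. So for $t\ge 2n-1$ we have $\mathrm{EH}^t_\omega(u)\ge\mathrm{EH}^{2n-1}_\omega(u)\ge\lambda^{\mathrm{Yam}}_M(\{g\})$, and taking the infimum gives \eqref{eq:lambda_Yam1}. For $0<t\le 2n-1$ we have $\mathrm{EH}^t_\omega(u)\le\mathrm{EH}^{2n-1}_\omega(u)$. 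Taking the infimum over $u$ on both sides gives \eqref{eq:lambda_Yam2}. The key point is that $\omega$ itself is a valid reference metric, so no conformal-change torsion terms ever appear.

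For \eqref{eq:lambda_Yam3}, suppose $g$ is a Yamabe minimiser, so $u\equiv1$ achieves $\lambda^{\mathrm{Yam}}_M(\{g\})$. Since $\nabla 1=0$, we get $\mathrm{EH}^t_\omega(1)=\mathrm{EH}^{2n-1}_\omega(1)=\lambda^{\mathrm{Yam}}_M(\{g\})$ for every $t$. This gives $\lambda^t_{(M,J)}(\{\omega\})\le\lambda^{\mathrm{Yam}}_M(\{g\})$, and combined with \eqref{eq:lambda_Yam1} it yields equality for every $t\ge 2n-1$, in particular on $[2n-1,2n]$.

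It remains to justify the parenthetical claim that a Kähler--Einstein metric is a Yamabe minimiser. This is the only non-formal step, and I will handle it by the sign of the scalar curvature. If $\mathrm{scal}\le0$, a constant-scalar-curvature metric is the unique minimiser up to scaling, by the rigidity argument of Proposition \ref{prop:rigidity} applied with $t=2n-1$ together with existence of a classical Yamabe minimiser. If $\mathrm{scal}>0$, I will invoke Obata's theorem: an Einstein metric is the unique constant-scalar-curvature metric in its conformal class up to scaling, unless it is the round sphere. Since the minimiser has constant scalar curvature, it must be $g$ up to scaling, and the round sphere case is covered by the same conclusion, since the round metric is itself a minimiser.
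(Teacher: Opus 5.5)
Your proof is correct. For \eqref{eq:lambda_Yam1} and \eqref{eq:lambda_Yam2} it is the paper's argument. With the K\"ahler metric as reference, $\mu^t(\omega)=\mathrm{scal}(g)$ for every $t$, and only the gradient coefficient $\frac{2t}{n-1}$ changes. So $\mathrm{EH}^t_\omega(u)$ is monotone in $t$ and reduces to the classical Yamabe functional at $t=2n-1$.

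For \eqref{eq:lambda_Yam3} your route differs in a small but useful way. The paper normalises $g$ to unit volume and notes that $\lambda^{\mathrm{Yam}}_M(\{g\})=\int_M\mathrm{scal}(g)\,\frac{\omega^n}{n!}=\Gamma_{(M,J)}(\{\omega\})$, since a K\"ahler metric is its own Gauduchon representative. It then squeezes $\lambda^t_{(M,J)}(\{\omega\})$ between \eqref{eq:lambda_Yam1} and the Gauduchon-degree bound \eqref{eq:lambda_gaud_ineq}. That bound is only stated for $0<t\le 2n$, and this is the sole source of the restriction $t\le 2n$ in \eqref{eq:lambda_Yam3}.

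You instead test directly with $u\equiv 1$, which is in substance the same test function. However, you make explicit that $\mathrm{EH}^t_\omega(1)$ does not depend on $t$ when $\omega$ is K\"ahler, because the $d^*\theta$ and $|T|^2$ terms vanish identically. So your argument gives $\lambda^t_{(M,J)}(\{\omega\})=\lambda^{\mathrm{Yam}}_M(\{g\})$ for every $t\ge 2n-1$. That is a strictly larger range than the one stated, obtained without passing through $\Gamma$.

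Your justification that a K\"ahler--Einstein metric is a Yamabe minimiser is sound; the paper leaves this implicit.
\begin{itemize}
\item For $\mathrm{scal}\le 0$ you could simply cite Corollary \ref{cor:Gamma} with $t=2n-1$.
\item For $\mathrm{scal}>0$ you do need existence of a minimiser from the full solution of the Yamabe problem (Trudinger--Aubin--Schoen) before applying Obata. You should state this explicitly in that case too.
\item The round-sphere exception in Obata's theorem never arises here. $S^{2n}$ carries no K\"ahler metric for $n\ge 2$, and $n\ge 2$ is forced by the exponent $q=\frac{2n}{n-1}$.
\end{itemize}
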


\begin{proof}
If $g$ is K{\"a}hler and $t \geq 2n-1$, then
$$
\mu^{t}(\omega) = \mathrm{scal}(g) \,\, , \quad 
\frac{2t}{n-1} \geq \frac{2(2n-1)}{n-1} \,\, .
$$
Hence, for every positive smooth function $u: M \to \mathbb{R}$, we have
$$
\mathrm{EH}^{t}(u) \geq \frac{\int_M \left(u^2 \mathrm{scal}(g) + \frac{2(2n-1)}{n-1} |\nabla u|^2 \right)\, \frac{\omega^n}{n!}}{\left(\int_M u^q \, \frac{\omega^n}{n!}\right)^{\frac{2}{q}}} \,\, .
$$
Since the right-hand side is precisely the classical Einstein--Hilbert functional, taking the infimum over $u$ gives \eqref{eq:lambda_Yam1}. The same argument shows that \eqref{eq:lambda_Yam2} holds.

As for the second statement, assume that $g$ is K{\"a}hler, has unit volume, and is a Yamabe minimiser in its conformal class. Then
$$
\lambda^{\mathrm{Yam}}_M(\{g\}) =
\int_M \mathrm{scal}(g) \frac{\omega^n}{n!} =
\Gamma_{(M,J)}(\{\omega\}) \,\, ,
$$
and so \eqref{eq:lambda_Yam3} follows from \eqref{eq:lambda_gaud_ineq} and \eqref{eq:lambda_Yam1}.
\end{proof}

\begin{remark}
As a direct consequence of Proposition \ref{prop:bound-classical-yamabe}, if $(M^{2n},J,g_{\mathrm{KE}})$ is a compact K{\"a}hler--Einstein manifold with fundamental $2$-form $\omega_{\mathrm{KE}}$, then
$$
\lambda^{t}_{(M,J)}(\{\omega_{\mathrm{KE}}\}) = \mathrm{scal}(g_{\mathrm{KE}})\,\mathrm{Vol}(M,g_{\mathrm{KE}})^{\frac{1}{n}} \quad \text{for every $2n-1 \leq t \leq 2n$} \,\, .
$$
\end{remark}

\subsection{Existence of constant deformed scalar curvature metrics} \label{sec:aubin} \hfill \par

For notational convenience, we set
\begin{equation} \label{eq:Lambda_n}
\Lambda_n \coloneqq 2n\,\mathrm{Vol}(S^{2n})^{\frac{1}{n}} \,\, .
\end{equation}
Then \cite[Theorem 2.1]{Hebey_ICTP}, after \cite[Th{\'e}or{\`e}me 1]{MR0431287}, directly yields a \emph{subcritical existence result} for minimising solutions to the critical equation \eqref{eq:t-DY}.

\begin{theorem}[{see \cite{MR0431287, Hebey_ICTP}}] \label{thm:existence_subcritical}
Let $(M^{2n},J,\omega)$ be a compact Hermitian manifold of complex dimension $n$. Let $t>0$ and let $\Lambda_n$ as in \eqref{eq:Lambda_n}. Then
\begin{equation} \label{eq:general_ineq_yamabe}
\lambda^{t}_{(M,J)}(\{\omega\}) \leq t\,\Lambda_n \,\, .
\end{equation}
Moreover, if $\lambda^{t}_{(M,J)}(\{\omega\}) < t\,\Lambda_n$, then there exists a positive smooth solution to \eqref{eq:t-DY}, which is a minimiser of the $t$-deformed Einstein--Hilbert functional $\mathrm{EH}^t$ restricted to the conformal class $\{\omega\}$ as in \eqref{eq:EHt(u)}.
\end{theorem}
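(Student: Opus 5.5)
The plan is to reduce \eqref{eq:t-DY} to the normalized form treated by Aubin and Hebey, namely $\Delta u + h\,u = \tilde c\,u^{q-1}$ on the compact Riemannian manifold $(M,g)$ of real dimension $N=2n$, where $q=\frac{2N}{N-2}=\frac{2n}{n-1}$. To do so, divide \eqref{eq:t-DY} by $\frac{2t}{n-1}$ and set $h\coloneqq \frac{n-1}{2t}\mu^t(\omega)$, which is smooth because $t>0$. Correspondingly, the functional \eqref{eq:EHt(u)} becomes
$$\mathrm{EH}^t(u)=\frac{2t}{n-1}\,I_h(u), \qquad I_h(u)\coloneqq \|u\|_{L^q}^{-2}\int_M\big(|\nabla u|^2+h u^2\big)\tfrac{\omega^n}{n!}.$$
It follows that $\lambda^t_{(M,J)}(\{\omega\})=\frac{2t}{n-1}\inf I_h$. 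The infimum can be taken over $H^1$ functions, since smooth positive functions are dense and replacing $u$ by $|u|$ does not change $I_h$.

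The upper bound is Aubin's: $\inf I_h\le K(N,2)^{-2}$, where $K(N,2)$ is the sharp Sobolev constant on $\mathbb R^N$. I would prove it by inserting concentrating bubbles $u_\varepsilon=\varphi(r)\,(\varepsilon+r^2)^{-(N-2)/2}$ around a point. The $h u_\varepsilon^2$ term is lower order in dimension $N\ge 3$, so $I_h(u_\varepsilon)\to K(N,2)^{-2}$. Recall that $K(N,2)^{-2}=\frac{N(N-2)}{4}\mathrm{Vol}(S^N)^{2/N}$. With $N=2n$ this equals $n(n-1)\mathrm{Vol}(S^{2n})^{1/n}$, and multiplying by $\frac{2t}{n-1}$ gives $2nt\,\mathrm{Vol}(S^{2n})^{1/n}=t\Lambda_n$, which is \eqref{eq:general_ineq_yamabe}. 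A useful consistency check: at $t=2n-1$ this recovers the classical Yamabe bound $\frac{4(N-1)}{N-2}\cdot\frac{N(N-2)}{4}\mathrm{Vol}(S^N)^{2/N}$.

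For existence under the strict inequality I would follow Trudinger--Aubin subcritical approximation, citing \cite[Th\'eor\`eme 1]{MR0431287}, \cite[Theorem 2.1]{Hebey_ICTP} for the details.
\begin{enumerate}
\item For $2<p<q$, Proposition \ref{prop:minimizer} gives a minimizer $u_p\ge 0$ of $\mathrm{EH}^t_p$, normalized by $\|u_p\|_{L^p}=1$. It solves the subcritical equation with constant $\lambda_p$, and elliptic regularity together with the strong maximum principle makes $u_p$ smooth and positive.
\item One shows $\limsup_{p\to q}\lambda_p\le\lambda^t$, and the $u_p$ are bounded in $H^1$.
\item The key step, which is the main obstacle, is to rule out concentration. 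I would use the almost-sharp Sobolev inequality $\|u\|_{L^q}^2\le (K^2+\epsilon)\|\nabla u\|_{L^2}^2+C_\epsilon\|u\|_{L^2}^2$, which holds on compact manifolds by Aubin. Combined with the strict inequality $\lambda^t<t\Lambda_n$, it yields a uniform $L^{r}$ bound for some $r>q$, via Trudinger's Moser-iteration argument, and hence uniform $C^{2,\alpha}$ bounds.
\item Passing to the limit then gives a nonzero solution $u$ of \eqref{eq:t-DY} with $c=\lambda^t$ and $\|u\|_{L^q}=1$. It is positive by the maximum principle and smooth by regularity, so it minimizes $\mathrm{EH}^t$ on $\{\omega\}$.
\end{enumerate}

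Since all of this is literally the theorem in \cite{Hebey_ICTP} applied to the rescaled equation, the proof will mainly consist of checking the normalization of constants carefully.
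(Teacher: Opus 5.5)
Your proposal is correct and follows the same route as the paper. The paper proves this theorem only by invoking \cite[Theorem 2.1]{Hebey_ICTP} (after Aubin) for \eqref{eq:t-DY}, and your rescaling by $\frac{2t}{n-1}$ with the constant check $\frac{2t}{n-1}\cdot n(n-1)\,\mathrm{Vol}(S^{2n})^{1/n}=t\Lambda_n$ is exactly the required normalization, matching the identity $k/\sigma_{2n}=t\Lambda_n$ used in the appendix; your Trudinger--Aubin steps are just the standard content of the cited result.
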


We notice that Proposition \ref{prop:Gaud_Yamabe} and Theorem \ref{thm:existence_subcritical} immediately imply the following existence result.

\begin{corollary}
Let $(M^{2n},J,\omega)$ be a compact Hermitian manifold of complex dimension $n$, and let $0 < t \leq 2n$. If $\Gamma_{(M,J)}(\{\omega\}) < t\,\Lambda_{n}$, for instance if $\Gamma_{(M,J)}(\{\omega\}) \leq 0$, then there exists a constant $t$-deformed scalar curvature metric in the conformal class $\{\omega\}$.
\end{corollary}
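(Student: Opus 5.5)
The plan is to chain Proposition \ref{prop:Gaud_Yamabe} with Theorem \ref{thm:existence_subcritical}. First, since $0<t\leq 2n$, inequality \eqref{eq:lambda_gaud_ineq} gives
$$
\lambda^{t}_{(M,J)}(\{\omega\}) \leq \Gamma_{(M,J)}(\{\omega\}) .
$$
The hypothesis $\Gamma_{(M,J)}(\{\omega\}) < t\,\Lambda_n$ then yields the strict inequality $\lambda^{t}_{(M,J)}(\{\omega\}) < t\,\Lambda_n$. This is exactly the subcritical condition required in Theorem \ref{thm:existence_subcritical}.

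Second, Theorem \ref{thm:existence_subcritical} produces a positive smooth solution $u$ of \eqref{eq:t-DY} with respect to $\omega$, for some constant $c$. By Remark \ref{rem:lambda_value}, $c=\boldsymbol{\mu}^t_{\mathrm{tot}}(u)/\|u\|^q_{L^q}$. Proposition \ref{prop:linear} then shows that the metric $u^{\frac{2}{n-1}}\omega\in\{\omega\}$ has constant $t$-deformed scalar curvature equal to $c$.

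For the parenthetical case, note that $t\,\Lambda_n>0$ because $t>0$ and $\Lambda_n>0$ by \eqref{eq:Lambda_n}. Hence $\Gamma_{(M,J)}(\{\omega\})\leq 0$ automatically implies $\Gamma_{(M,J)}(\{\omega\})<t\,\Lambda_n$, and the previous argument applies.

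There is no genuine obstacle; all the analytic work sits in Aubin's theorem, invoked through Theorem \ref{thm:existence_subcritical}. The one point needing care is to verify that the strict inequality survives the chain $\lambda^t\leq\Gamma<t\Lambda_n$, and that the restriction $t\leq 2n$ is used only to obtain \eqref{eq:lambda_gaud_ineq}.
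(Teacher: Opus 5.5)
Your proposal is correct and follows the paper's route: the paper derives this corollary immediately from inequality \eqref{eq:lambda_gaud_ineq} in Proposition \ref{prop:Gaud_Yamabe} combined with Theorem \ref{thm:existence_subcritical}, with Proposition \ref{prop:linear} implicitly turning the positive solution of \eqref{eq:t-DY} into a metric of constant $t$-deformed scalar curvature. Your handling of the parenthetical case, via $t\,\Lambda_n>0$, is likewise exactly what is intended.
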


Moreover, Proposition \ref{prop:rigidity} and Theorem \ref{thm:existence_subcritical} allow us to calculate the $t$-deformed Yamabe invariant in the non-positive case, as the next corollary shows.

\begin{corollary}\label{cor:Gamma}
Let $(M^{2n},J,\omega)$ be a compact Hermitian manifold of complex dimension $n$. Let $t>0$, and let $c \leq 0$ be a non-positive constant. If $\mu^{t}(\omega) = c$, then 
$$
\lambda^{t}_{(M,J)}(\{\omega\}) = c\,\mathrm{Vol}(M,g_{\omega})^{\frac{1}{n}} \,\, .
$$
\end{corollary}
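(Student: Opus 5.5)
We prove the two inequalities $\lambda^t_{(M,J)}(\{\omega\}) \leq c\,\mathrm{Vol}(M,g_\omega)^{\frac1n}$ and $\lambda^t_{(M,J)}(\{\omega\}) \geq c\,\mathrm{Vol}(M,g_\omega)^{\frac1n}$ separately, taking $\omega$ itself as the reference metric.

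\emph{Upper bound.} Test the functional \eqref{eq:EHt(u)} with $u\equiv 1$. Since $\mu^t(\omega)=c$ and $\nabla u=0$, we get
$$
\mathrm{EH}^t(1) = c\,\mathrm{Vol}^{1-\frac{2}{q}} = c\,\mathrm{Vol}(M,g_\omega)^{\frac1n},
$$
because $1-\frac2q = \frac1n$. Hence $\lambda^t_{(M,J)}(\{\omega\}) \leq c\,\mathrm{Vol}(M,g_\omega)^{\frac1n}$.

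\emph{Lower bound.} Here $c\leq 0$, so
$$
\lambda^t_{(M,J)}(\{\omega\}) \leq c\,\mathrm{Vol}(M,g_\omega)^{\frac1n} \leq 0 < t\,\Lambda_n .
$$
Theorem \ref{thm:existence_subcritical} therefore applies and gives a positive smooth minimiser $u$ of $\mathrm{EH}^t$ on $\{\omega\}$. This $u$ solves \eqref{eq:t-DY} with some constant $c'$, so the metric $\omega'=u^{\frac{2}{n-1}}\omega$ has $\mu^t(\omega')=c'$. By Proposition \ref{prop:rigidity}, since $\mu^t(\omega)=c\leq 0$, the metric $\omega'$ is a constant multiple $k\omega$ with $k>0$. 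The functional $\mathrm{EH}^t$ on metrics is scale invariant: from \eqref{eq:mu_t_conformal} with $f$ constant, $\mu^t(k\omega)=k^{-1}\mu^t(\omega)$, while the volume scales by $k^n$, and the exponents balance. Therefore
$$
\lambda^t_{(M,J)}(\{\omega\}) = \mathrm{EH}^t(\omega') = \mathrm{EH}^t(\omega) = c\,\mathrm{Vol}(M,g_\omega)^{\frac1n}.
$$

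\emph{Alternative route for $c<0$.} The lower bound can also be obtained without the existence theorem. Hölder's inequality gives $\int u^2 \leq \|u\|_{L^q}^2\,\mathrm{Vol}^{\frac1n}$. Since $c<0$, multiplying by $c$ reverses this, so
$$
\mathrm{EH}^t(u) \geq \frac{c\int u^2}{\|u\|_{L^q}^2} \geq c\,\mathrm{Vol}^{\frac1n},
$$
using that the gradient term is nonnegative. For $c=0$ we have directly $\mathrm{EH}^t\geq 0$. This route is quicker, and I would present it as the main argument.

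The only point needing care is checking the scaling exponents, namely $1-\frac2q=\frac1n$, and the direction of Hölder's inequality when multiplied by the negative constant $c$; there is no real obstacle.
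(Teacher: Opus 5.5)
Your proof is correct. The existence-plus-rigidity argument you give first is exactly the paper's proof. The test function $u\equiv 1$ gives $\lambda^t_{(M,J)}(\{\omega\})\le c\,\mathrm{Vol}(M,g_\omega)^{\frac1n}\le 0<t\,\Lambda_n$. Theorem \ref{thm:existence_subcritical} then produces a smooth minimiser, and Proposition \ref{prop:rigidity} forces it to be a constant rescaling of $\omega$.

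The H\"older route you would rather present is genuinely different and more elementary. It is simply the lower bound of Proposition \ref{prop:minimizer} at the critical exponent $p=q$: since $\frac{q}{q-2}=n$, one has $\|\mu^t(\omega)\|_{L^{n}}=|c|\,\mathrm{Vol}(M,g_\omega)^{\frac1n}$. It uses only $t>0$, to drop the gradient term, and $c\le 0$. It bypasses both the Aubin-type existence theorem and the maximum-principle argument, and it does not need the connectedness assumed in Proposition \ref{prop:rigidity}. Its equality case also identifies the minimisers directly, since they must have $\nabla u=0$.

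What the paper's route offers beyond this is not needed for the corollary. Proposition \ref{prop:rigidity} gives uniqueness among all solutions of \eqref{eq:t-DY}, not only among minimisers.
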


\begin{proof}
Since $\mu^{t}(\omega) = c \leq 0$, we have
$$
\lambda^{t}_{(M,J)}(\{\omega\}) \leq \mathrm{EH}^{t}(\omega) = c\,\mathrm{Vol}(M,g_{\omega})^{\frac{1}{n}}
\leq 0 \,\, ,
$$
and therefore Theorem \ref{thm:existence_subcritical} implies that the infimum of $\mathrm{EH}^{t}$ in the conformal class $\{\omega\}$ is realised by a smooth positive minimiser. By Proposition \ref{prop:rigidity}, such a minimiser is necessarily a constant rescaling of $\omega$, and this concludes the proof.
\end{proof}

It is still not clear whether equality in \eqref{eq:general_ineq_yamabe} can ever be realised. However, the following result gives a pointwise obstruction to this equality in terms of the sign of the $0$-deformed scalar curvature
$$ \mu^0(\omega) = \mathrm{scal}^{\mathrm{Ch}}(\omega) -\frac{n}{n-1}d^*\theta - \frac{n}{2}|T|^2 $$
at a point.

\begin{theorem}[\cite{MR0431287}] \label{thm:aubins_crit}
Let $(M^{2n},J,\omega)$ be a compact Hermitian manifold of complex dimension $n\geq 3$, and fix $t>0$.
If there exists a point $p \in M$ such that
$$
\left.(t-2n+1)\,\mu^0(\omega)\right|_p >0 \,\, ,
$$
then $\lambda^{t}_{(M,J)}(\{\omega\}) < t\,\Lambda_n$.
\end{theorem}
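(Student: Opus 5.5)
Our plan is to follow Aubin's test-function argument: insert into $\mathrm{EH}^t_\omega$ a concentrating bubble centred at $p$ and read off the sign of the first correction in the expansion. Rescaling by $(n-1)/(2t)$, the functional becomes
$$
\frac{n-1}{2t}\,\mathrm{EH}^t(u)=\frac{\int_M\big(|\nabla u|^2+\tfrac{n-1}{2t}\mu^t(\omega)u^2\big)\frac{\omega^n}{n!}}{\|u\|_{L^q}^2},
$$
whose Sobolev threshold is the sharp constant $K(2n,2)^{-2}=\frac{n-1}{2t}\,t\Lambda_n=\frac{(2n)(2n-2)}{4}\mathrm{Vol}(S^{2n})^{1/n}$. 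By Aubin's theorem it therefore suffices to exhibit one $u$ with $\frac{n-1}{2t}\mathrm{EH}^t(u)<K(2n,2)^{-2}$.

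First we fix normal coordinates for $g$ at $p$. Let $r$ be the geodesic distance, $\varphi$ a cutoff, and set $u_\varepsilon=\varphi(r)(\varepsilon+r^2)^{-(n-1)}$. This is the standard bubble in real dimension $N=2n$, with exponent $(N-2)/2$. Aubin's expansion, valid for $N\ge 6$, i.e.\ $n\ge3$, where the correction term is $O(\varepsilon)$ rather than involving $\log\varepsilon$, gives
$$
\frac{\int|\nabla u_\varepsilon|^2+a\int h u_\varepsilon^2}{\|u_\varepsilon\|_q^2}
=K^{-2}+C_n\,\varepsilon\Big(a\,h(p)-\tfrac{N-2}{4(N-1)}\mathrm{scal}(g)(p)\Big)+o(\varepsilon),
$$
with $C_n>0$ and $h=\mu^t(\omega)$, $a=\frac{n-1}{2t}$. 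The $\mathrm{scal}(g)$ term comes from the volume-form expansion $\sqrt{\det g}=1-\frac16\mathrm{Ric}_{ij}x^ix^j+\dots$. In the classical Yamabe case ($a=\frac{N-2}{4(N-1)}$, $h=\mathrm{scal}$) the bracket vanishes, which is the familiar reason Aubin's argument there requires Weyl curvature. Since $\frac{N-2}{4(N-1)}=\frac{n-1}{2(2n-1)}$, the bracket equals
$$
\frac{n-1}{2t}\Big(\mu^t(\omega)-\tfrac{t}{2n-1}\mathrm{scal}(g)\Big)(p).
$$

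Next we express this combination through $\mu^0$. By definition, $\mu^t=\mu^0+\frac{t}{n-1}d^*\theta+\frac{t}{4}|T|^2$, and by \eqref{eq:scal-scalCh}, $\mathrm{scal}(g)=\mu^{2n-1}=\mu^0+\frac{2n-1}{n-1}d^*\theta+\frac{2n-1}{4}|T|^2$. Hence
$$
\mu^t-\tfrac{t}{2n-1}\mathrm{scal}(g)=\Big(1-\tfrac{t}{2n-1}\Big)\mu^0=-\tfrac{t-2n+1}{2n-1}\,\mu^0,
$$
and all torsion terms cancel. This cancellation is the key identity. Consequently the $\varepsilon$-coefficient is a negative constant times $(t-2n+1)\mu^0(\omega)(p)$, so the hypothesis makes it strictly negative. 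For small $\varepsilon$ we then get $\mathrm{EH}^t(u_\varepsilon)<t\Lambda_n$, hence $\lambda^t<t\Lambda_n$.

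The main obstacle is carrying out the expansion carefully: tracking the constants so that the $\mathrm{scal}(g)$ coefficient comes out exactly $\frac{N-2}{4(N-1)}$ relative to the $h(p)$ coefficient, and controlling the error terms, including the variation of $h$ near $p$ and the cutoff region. These contribute $o(\varepsilon)$ precisely when $N\ge6$. I would first reproduce Aubin's computation of $\int|\nabla u_\varepsilon|^2$, $\int u_\varepsilon^2$ and $\|u_\varepsilon\|_q^2$ using Beta-function integrals, and then simply substitute the identity above.
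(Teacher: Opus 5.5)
Your proposal is correct and follows the paper's route. A concentrating bubble reduces the claim to Aubin's pointwise criterion $\frac{2(2n-1)}{n-1}\mu^t(\omega)(p) < \frac{2t}{n-1}\mathrm{scal}(g_\omega)(p)$ (Theorem \ref{thm:bubble_test}), and your torsion-cancelling identity $\mu^t - \frac{t}{2n-1}\mathrm{scal}(g) = -\frac{t-2n+1}{2n-1}\mu^0$ is exactly the computation in the remark following that theorem. The only difference is technical: you take Aubin's expansion in Riemannian normal coordinates, where $\mathrm{scal}(g)$ enters directly through $\sqrt{\det g}$ and the classical result can simply be cited, whereas the paper's appendix redoes the expansion in holomorphic normal coordinates with a bubble radial in $|z|$, and recovers $\mathrm{scal}(g)$ only after combining $\Phi$ and $\Psi$ via \eqref{eq:scal-scalCh}.
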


Notice that Theorem \ref{thm:aubins_crit} follows from a classical argument as in \cite[Th{\'e}or{\`e}me 4]{MR0431287} (see also \cite[Proposition~$1$]{MR2272821}). The argument consists in evaluating the $t$-deformed Einstein--Hilbert functional \eqref{eq:EHt(u)} on a suitable bump function, concentrated in a ball of very small radius around a point $p \in M$, and then expanding in Taylor series at $p$ as the radius goes to $0$. For the sake of completeness, we give a proof of Theorem \ref{thm:aubins_crit} in the Appendix, making use of \emph{holomorphic normal coordinates}.

\begin{remark}
As shown in \eqref{eq:mu_t_conformal}, the quantity $\mu^0(\omega)$ is conformally equivariant, namely
$$
\mu^0(e^f\omega) = e^{-f}\mu^0(\omega) \quad \text{for all $f \in \mathcal{C}^{\infty}(M,\mathbb{R})$} \,\, .
$$
Therefore, the sign of $\mu^0(\omega)$ at a point $p \in M$ is conformally invariant. Moreover, when $t = 2n-1$, which corresponds to the classical Yamabe problem, Theorem \ref{thm:aubins_crit} gives no information.
\end{remark}

\begin{remark} \label{rem:Hopf}
The condition stated in Theorem \ref{thm:aubins_crit} is far from being necessary. For example, consider the Hopf manifold $(X^n,\omega) = (S^1\times S^{2n-1},J,g)$ endowed with its standard Hermitian structure. Namely, $g$ is the product of the round metric of radius $1$ on $S^1$ and the round metric of radius $1$ on $S^{2n-1}$, while $J$ coincides with the transverse complex structure on $S^{2n-1}$ and sends the Reeb vector field of $S^{2n-1}$ to the generator of $S^1$. Then, $(X^n,\omega)$ admits a transitive left action of $\mathsf{U}(1) \times \mathsf{U}(n)$ by holomorphic isometries, and so $\mu^t(\omega)$ is constant for every $t \in \mathbb{R}$. Moreover, a direct computation (see Example \ref{ex:Hopf}) shows that
$$
\mu^t(\omega) = t(n-1)
$$
and so, in particular, $\mu^0(\omega) = 0$. On the other hand, using $\omega$ as a reference metric, for $n \geq 3$ we have
$$
\mathrm{EH}^t(1) = t\,\Lambda_n\frac{n-1}{2n}\frac{\mathrm{Vol}(S^1 \times S^{2n-1})^{\frac{1}{n}}}{\mathrm{Vol}(S^{2n})^{\frac{1}{n}}} \leq t\,\Lambda_n \frac{n-1}{2n} \left(\frac{15\pi}{8}\right)^{\frac{1}{3}}
$$
and so
$$
\lambda^{t}_{X^n}(\{\omega\}) < \frac{\sqrt[3]{15\pi}}{4}\,t\,\Lambda_n \,\, .
$$
\end{remark}

In the locally conformally K{\"a}hler setting, Theorem \ref{thm:aubins_crit} gives the following.

\begin{corollary} \label{cor:existence}
Let $(M^{2n},J,\eta)$ be a locally conformally K{\"a}hler manifold. Assume that $\eta$ is Gauduchon with volume $1$ and denote by $\theta$ its Lee form. If either
\begin{equation} \label{eq:lck-cond1}
t > 2n-1 \quad \text{and} \quad \Gamma_{(M,J)}(\{\eta\}) > \frac{n}{n-1}\|\theta\|_{L^2}^2 \,\, ,
\end{equation}
or
\begin{equation} \label{eq:lck-cond2}
\Gamma_{(M,J)}(\{\eta\}) < t\,\Lambda_n -\frac{t-2n}{2(n-1)}\|\theta\|_{L^2}^2 \,\, ,
\end{equation}
then there exists a $t$-deformed constant scalar curvature metric $\omega \in \{\eta\}$. In particular, if $t > 2n-1$ and $\eta$ is K{\"a}hler, then there exists a $t$-deformed constant scalar curvature metric $\omega \in \{\eta\}$.
\end{corollary}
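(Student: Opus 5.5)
The plan is to compute $\mu^t(\eta)$ explicitly for a locally conformally K\"ahler Gauduchon metric. Then condition \eqref{eq:lck-cond2} will reduce to Theorem \ref{thm:existence_subcritical}, and condition \eqref{eq:lck-cond1} to Theorem \ref{thm:aubins_crit}.

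\emph{Step 1: the torsion in terms of the Lee form.} I would first show that for a locally conformally K\"ahler metric
$$|T|^2 = \tfrac{2}{n-1}|\theta|^2 .$$
This is a pointwise identity, so it can be checked locally. Locally $\eta = e^{-f}\omega_K$ with $\omega_K$ K\"ahler, so $T_K = 0$ and $\theta_K = 0$. Applying \eqref{eq:confT} with reference metric $\omega_K$ gives $|T|^2_\eta = e^{f}\,2(n-1)|\nabla f|^2_{g_K}$. From $d\omega^{n-1}=\theta\wedge\omega^{n-1}$ one reads off $\theta = -(n-1)\,df$, hence $|\theta|^2_\eta = e^{f}(n-1)^2|df|^2_{g_K}$. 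Comparing the two expressions gives the identity. (Equivalently, one can use $d\eta = \frac{1}{n-1}\theta\wedge\eta$ directly in \eqref{eq:T}.)

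\emph{Step 2: the formula for $\mu^t(\eta)$.} Since $\eta$ is Gauduchon, $d^*\theta = 0$. Substituting Step 1 into \eqref{eq:mu_t} gives
$$\mu^t(\eta) = \mathrm{scal}^{\mathrm{Ch}}(\eta) + \frac{t-2n}{2(n-1)}|\theta|^2 ,$$
and in particular
$$\mu^0(\eta) = \mathrm{scal}^{\mathrm{Ch}}(\eta) - \frac{n}{n-1}|\theta|^2 .$$

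\emph{Step 3: condition \eqref{eq:lck-cond2}.} Take $\eta$ as reference metric and $u=1$ in \eqref{eq:EHt(u)}. Since $\eta$ has unit volume,
$$\lambda^t_{(M,J)}(\{\eta\}) \le \mathrm{EH}^t(1) = \Gamma_{(M,J)}(\{\eta\}) + \frac{t-2n}{2(n-1)}\|\theta\|^2_{L^2} < t\Lambda_n .$$
Theorem \ref{thm:existence_subcritical} then gives the desired metric.

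\emph{Step 4: condition \eqref{eq:lck-cond1}.} Integrating the formula for $\mu^0(\eta)$ gives
$$\int_M \mu^0(\eta)\,\tfrac{\eta^n}{n!} = \Gamma_{(M,J)}(\{\eta\}) - \frac{n}{n-1}\|\theta\|^2_{L^2} > 0 ,$$
so there is a point $p$ with $\mu^0(\eta)(p)>0$. Since $t>2n-1$, we get $(t-2n+1)\mu^0(\eta)|_p>0$. Theorem \ref{thm:aubins_crit} then gives $\lambda^t_{(M,J)}(\{\eta\}) < t\Lambda_n$, and Theorem \ref{thm:existence_subcritical} concludes. This step inherits the hypothesis $n\ge 3$ from Theorem \ref{thm:aubins_crit}, so that dimensional restriction has to be assumed or stated here.

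\emph{Step 5: the K\"ahler case.} If $\eta$ is K\"ahler, then it is the unit-volume Gauduchon representative of its class, and $\theta=0$. If $\Gamma_{(M,J)}(\{\eta\})>0$, then \eqref{eq:lck-cond1} holds. If $\Gamma_{(M,J)}(\{\eta\})\le 0$, then $\Gamma_{(M,J)}(\{\eta\}) < t\Lambda_n$ trivially, which is \eqref{eq:lck-cond2}. Either way a solution exists.

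The main obstacle is Step 1: the normalisations of $T$ and $\theta$ must match exactly, since the whole argument depends on the constant $\tfrac{2}{n-1}$ in $|T|^2 = \tfrac{2}{n-1}|\theta|^2$.
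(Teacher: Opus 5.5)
Your proof is correct and follows the paper's argument. The paper likewise uses $|T|^2=\frac{2}{n-1}|\theta|^2$ and $d^*\theta=0$ to get $\mu^t(\eta)=\mathrm{scal}^{\mathrm{Ch}}(\eta)+\frac{t-2n}{2(n-1)}|\theta|^2$. It then tests $\mathrm{EH}^t(1)$ for \eqref{eq:lck-cond2}, finds a point with $\mu^0(\eta)>0$ for \eqref{eq:lck-cond1}, and obtains the K\"ahler case by observing that the two conditions together cover every value of $\Gamma$ when $\|\theta\|_{L^2}^2<2(n-1)\Lambda_n$, in particular when $\theta=0$. Your local derivation of the torsion identity via Lemma \ref{lem:confchange}, and your remark that the \eqref{eq:lck-cond1} branch tacitly needs the hypothesis $n\ge 3$ of Theorem \ref{thm:aubins_crit}, are both accurate; the paper leaves these points implicit.
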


\begin{proof}
Since $\eta$ is locally conformally K{\"a}hler and Gauduchon, it follows that
$$
|T|^2 = \frac{2}{n-1} |\theta|^2 \,\, , \quad d^*\theta = 0  \,\, .
$$
Hence, by \eqref{eq:mu_t}, we obtain
$$
\mu^t(\eta) = \mathrm{scal}^{\mathrm{Ch}}(\eta) +\frac{t -2n}{2(n-1)}|\theta|^2 \,\, .
$$
Therefore, if \eqref{eq:lck-cond1} holds, then there exists a point $p \in M$ such that
$$
\left.(t-2n+1)\,\mu^0(\omega)\right|_p >0 \,\, ,
$$
and the claim follows from Theorem \ref{thm:existence_subcritical} and Theorem \ref{thm:aubins_crit}. On the other hand, using $\eta$ as a reference metric, we have
$$
\mathrm{EH}^t(1) = \int_M \mu^t(\eta)\frac{\eta^n}{n!} = \Gamma_{(M,J)}(\{\eta\}) +\frac{t -2n}{2(n-1)}\|\theta\|_{L^2}^2.
$$
Thus, if \eqref{eq:lck-cond2} holds, then $\lambda^{t}_{(M,J)}(\{\eta\}) < t\,\Lambda_n$, and the claim follows again from Theorem \ref{thm:existence_subcritical}.

Finally, we observe that, when $t>2n-1$, the two ranges in \eqref{eq:lck-cond1} and \eqref{eq:lck-cond2} overlap if and only if
$$
\|\theta\|_{L^2}^2 < 2(n-1) \Lambda_n \,\, .
$$
This proves the last claim.
\end{proof}

\begin{remark}
As in the proof of Corollary \ref{cor:existence}, the two ranges in \eqref{eq:lck-cond1} and \eqref{eq:lck-cond2} may overlap beyond the K{\"a}hler case, depending on the size of $\|\theta\|_{L^2}$.
\end{remark}

\section{Examples} \label{sec:examples}
\setcounter{equation} 0

\subsection{Product of complex manifolds with a K{\"a}hler manifold} \hfill \par

Since the condition in Theorem \ref{thm:aubins_crit} is pointwise, one can construct examples of manifolds admitting metrics with constant $t$-deformed scalar curvature, for $t \neq 2n-1$, by taking products with K{\"a}hler manifolds. More precisely, the following result holds.

\begin{theorem}\label{thm:prod-kahler}
Let $(X^m,\omega_X)$ be a compact K{\"a}hler manifold of complex dimension $m$, let $(Y^k,\omega_Y)$ be a compact Hermitian manifold of complex dimension $k$, with $n \coloneqq m+k \geq 3$, and let $t>0$.
\begin{itemize}
\item[$i)$] If $t > 2n-1$ and there exists $x \in X$ such that $\mathrm{scal}(\omega_X)(x) > 0$, then $X \times Y$ admits a Hermitian metric with constant $t$-deformed scalar curvature.
\item[$ii)$] If $0 < t < 2n-1$ and there exists $x \in X$ such that $\mathrm{scal}(\omega_X)(x) < 0$, then $X \times Y$ admits a Hermitian metric with constant $t$-deformed scalar curvature.
\end{itemize}
\end{theorem}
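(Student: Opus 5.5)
The plan is to apply Theorem \ref{thm:aubins_crit} together with Theorem \ref{thm:existence_subcritical}. This requires a Hermitian metric on $M \coloneqq X\times Y$ and a point $p$ with $(t-2n+1)\,\mu^0|_p>0$. The key point is that $\mu^0$ is conformally equivariant, so only its sign at one point matters. We may therefore freely rescale the factors: take the product metric $\omega_\varepsilon \coloneqq \omega_X + \varepsilon\,\omega_Y$ (or equivalently shrink $X$ by $\varepsilon$), with $\varepsilon>0$ to be chosen.

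First I compute $\mu^0$ of a product metric. The Chern connection of a product of Hermitian manifolds is the product connection. Hence the Chern curvature splits, and $\mathrm{scal}^{\mathrm{Ch}}(\omega_X+\varepsilon\omega_Y) = \mathrm{scal}(\omega_X) + \varepsilon^{-1}\mathrm{scal}^{\mathrm{Ch}}(\omega_Y)$, using that $\omega_X$ is Kähler. The torsion lives only on $Y$, and it scales so that $|T|^2_{\omega_\varepsilon} = \varepsilon^{-1}|T_Y|^2_{\omega_Y}$. The Lee form, being the trace of the torsion, is $\theta = \theta_Y$ pulled back; it is scale invariant as a $1$-form. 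One subtlety must be checked: the Lee form in dimension $n$ versus dimension $k$. The trace definition \eqref{eq:theta} is intrinsic, so there is no normalisation mismatch. The codifferential of a pulled-back form on a product is computed on the $Y$ factor, so $d^*_{\omega_\varepsilon}\theta = \varepsilon^{-1} d^*_{\omega_Y}\theta_Y$. Altogether, at a point $p=(x,y)$,
$$
\mu^0(\omega_\varepsilon)(p) = \mathrm{scal}(\omega_X)(x) + \varepsilon^{-1}\Big(\mathrm{scal}^{\mathrm{Ch}}(\omega_Y) - \tfrac{n}{n-1}d^*\theta_Y - \tfrac n2 |T_Y|^2\Big)(y),
$$
where $n=m+k$ appears in the coefficients rather than $k$. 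That is harmless, since we only need the bracket to be some bounded function $B(y)$.

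The naive choice $\varepsilon\to\infty$ makes the $Y$ contribution vanish. Then $\mu^0(\omega_\varepsilon)(x,y)\to \mathrm{scal}(\omega_X)(x)$, and for $\varepsilon$ large this has the sign of $\mathrm{scal}(\omega_X)(x)$. The sign is positive in case $i)$, where $t-2n+1>0$, and negative in case $ii)$, where $t-2n+1<0$. In both cases $(t-2n+1)\mu^0(\omega_\varepsilon)(p)>0$. Theorem \ref{thm:aubins_crit} (valid since $n\ge3$) then gives $\lambda^t_{(M,J)}(\{\omega_\varepsilon\})<t\Lambda_n$, and Theorem \ref{thm:existence_subcritical} yields a metric in $\{\omega_\varepsilon\}$ with constant $t$-deformed scalar curvature.

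The main obstacle is verifying the scaling identities for the product, especially $d^*\theta$ and $|T|^2$ under $\omega_Y\mapsto\varepsilon\omega_Y$. The torsion $T$ as a $(1,2)$-tensor is scale invariant, so its squared norm picks up $\varepsilon^{-1}$. The quantity $d^*\theta$ involves one inverse metric contraction, which also gives $\varepsilon^{-1}$. Alternatively, one can bypass these computations entirely: $\mu^0(\omega_\varepsilon)-\mathrm{scal}(\omega_X)$ is a sum of torsion and curvature terms of $\varepsilon\omega_Y$, each homogeneous of degree $-1$ in $\varepsilon$, and this suffices.
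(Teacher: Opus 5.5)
Your proof is correct and is essentially the paper's argument: you take a product metric in which the Kähler term $\mathrm{scal}(\omega_X)(x)$ dominates $\mu^0$ at a point, then apply Theorem \ref{thm:aubins_crit} and Theorem \ref{thm:existence_subcritical}. Your $\omega_X+\varepsilon\omega_Y$ with $\varepsilon\to\infty$ is $\varepsilon$ times the paper's $\varepsilon^{-1}\omega_X\oplus\omega_Y$, so the two choices differ only by a homothety.

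The only quibble is your opening justification. Rescaling the two factors separately is not a conformal change, so conformal equivariance of $\mu^0$ does not justify it. Nothing depends on this, however, since the statement only asks for some Hermitian metric on $X\times Y$.
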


\begin{proof}
Let us denote by $T_Y$, $\theta_Y$, and $d^*_Y$ the torsion, the Lee form, and the codifferential of $(Y^k,\omega_Y)$, respectively. Consider the Hermitian product
$$
(M^{2n},J) \coloneqq X^m \times Y^k \,\, , 
\qquad 
\omega_{\varepsilon} \coloneqq \varepsilon\, \omega_X \oplus \omega_Y 
\quad \text{for $\varepsilon >0$} \,\, ,
$$
and denote by $T_{\varepsilon}$, $\theta_{\varepsilon}$, and $d^*_{\varepsilon}$ the torsion, the Lee form, and the codifferential of $(M^{2n},J,\omega_{\varepsilon})$, respectively. Here and in the following, with a slight abuse of notation, we omit the pullback symbols from the projections of $M$ onto its factors.

By \eqref{eq:T}, the torsion $T_{\varepsilon}$ splits diagonally, that is,
$$
T_{\varepsilon} = 0 \oplus T_Y.
$$
Thus, \eqref{eq:theta} implies that
$$
\theta_{\varepsilon} = 0 \oplus \theta_Y
$$
and hence
$$
d^*_{\varepsilon}\theta_{\varepsilon} = d^*_Y\theta_Y \,\, .
$$
Moreover, the Chern scalar curvature of $\omega_{\varepsilon}$ is given by
$$
\mathrm{scal}^{\mathrm{Ch}}(\omega_{\varepsilon}) 
= \varepsilon^{-1}\,\mathrm{scal}(\omega_X) 
+ \mathrm{scal}^{\mathrm{Ch}}(\omega_Y) \,\, .
$$
Therefore, by \eqref{eq:mu_t} we obtain
\begin{equation} \label{eq:mu^0(omegaepsilon)}
\mu^0(\omega_{\varepsilon}) =
\varepsilon^{-1}\,\mathrm{scal}(\omega_X) + \mathrm{scal}^{\mathrm{Ch}}(\omega_Y)
-\frac{n}{n-1}d^*_Y\theta_Y
-\frac{n}{2}|T_Y|^2 \,\, .
\end{equation}
The result then follows from Theorem \ref{thm:aubins_crit} and \eqref{eq:mu^0(omegaepsilon)}, after choosing $\varepsilon>0$ sufficiently small so that $\mu^0(\omega_\varepsilon)$ has the required sign at some point.
\end{proof}

\subsection{Locally homogeneous Hermitian manifolds} \hfill \par

We recall that a Hermitian manifold $(M,J,g)$ is said to be \emph{locally homogeneous} if the action of its pseudogroup of local holomorphic isometries is transitive. Namely, for every $x,y \in M$, there exist neighborhoods $U_x$ and $U_y$ of $x$ and $y$, respectively, and a local holomorphic isometry $f \colon U_x \to U_y$ such that $f(x)=y$.

Clearly, every locally homogeneous Hermitian manifold has constant deformed scalar curvatures.

For example, consider an Inoue--Bombieri surface of type $S^0$, which is a compact complex surface with negative Kodaira dimension and first Betti number equal to $1$. The canonical metric introduced by Tricerri \cite{MR0706055} is invariant under left-translations on the covering solvable Lie group, hence it is locally homogeneous. This yields an example of a non-Vaisman, locally conformally K{\"a}hler structure with constant deformed $t$-curvature for all $t \in \mathbb{R}$.

Another example is the Hopf manifold, obtained as the quotient of $\mathbb{C}^n \setminus \{0\}$ by a free and properly discontinuous action of $\mathbb{Z}$ (see Example \ref{ex:Hopf}).

\subsection{Hirzebruch surfaces} \hfill \par

The first non-homogeneous example of a Riemannian Einstein manifold with positive scalar curvature was constructed by Page \cite{Page78, MR879886} on $\mathbb{CP}^2 \# \overline{\mathbb{CP}}{}^2$, and later generalized by B\'erard-Bergery \cite{MR727843}, using a cohomogeneity-one method. The same approach was used in \cite{MR4567558} to construct non-homogeneous Hermitian manifolds with constant Chern scalar curvature. Here, we show that this framework can be used to construct non-homogeneous Hermitian manifolds with constant $t$-deformed scalar curvature. In what follows, we specialize to Hirzebruch surfaces and to the case $t=2n=4$, that is, to the momentum $\mu^4$ defined in \eqref{eq:moment}.

To this end, let $m \in \mathbb{Z}$, $m>0$. We recall that the $m$-Hirzebruch surface $M_m$ is the total space of the $\mathbb{CP}^1$-bundle $\pi_m \colon M_m \to \mathbb{CP}^1$ associated with the $\mathsf{U}(1)$-principal bundle $s_m \colon \Sigma_m = S^3/\mathbb{Z}_m \to \mathbb{CP}^1$, whose Euler class is
$$
e(\Sigma_m) = -\frac{m}{2\pi}[\omega_{\mathrm{FS}}] \,\, ,
$$
via the standard left action of $\mathsf{U}(1)$ on $\mathbb{CP}^1$. Here, we denoted by $\omega_{\mathrm{FS}}$ the fundamental $2$-form associated to the Fubini-Study metric $g_{\mathrm{FS}}$ on $\mathbb{CP}^1$, normalized so that $\mathrm{Ric}(g_{\mathrm{FS}}) = 2\,g_{\mathrm{FS}}$. The complex surface $M_m$ is compact and simply-connected. Moreover, it admits a left holomorphic action of $\mathsf{U}(2)$, with cohomogeneity one principal orbits, that turns $\pi_m$ into a $\mathsf{U}(2)$-equivariant holomorphic submersion. The subset $M_m^{\mathrm{reg}} \subset M_m$ of regular points for the action of $\mathsf{U}(2)$ is diffeomorphic to $\Sigma_m \times (0,L)$, with $L>0$. Finally, the action has precisely two singular orbits, both diffeomorphic to $\mathbb{CP}^1$.

Fix a principal connection $\theta_m$ on $\Sigma_m$ such that $d\theta_m = -m\,s_m^*\omega_{\mathrm{FS}}$. Given two positive numbers $\nu, \kappa >0$, we denote by $\tilde{g}(\nu,\kappa)$ the $\mathsf{U}(2)$-invariant Riemannian metric on $\Sigma_m$ characterized by the following properties: \begin{itemize}
\item[$i)$] the restriction of $\tilde{g}(\nu,\kappa)$ to the vertical distribution $\ker{( d s_m)} \subset T\Sigma_m$ corresponds to the round metric of length $\frac{2\pi}{m}\nu$ on $\mathsf{U}(1)$;
\item[$ii)$] the $\tilde{g}(\nu,\kappa)$-orthogonal complement $\ker{( d s_m)}^{\perp}\subset T\Sigma_m$ to the vertical distribution $\ker{( d s_m)}$ satisfies $\ker{( d s_m)}^{\perp} = \ker{(\theta_m)}$;
\item[$iii)$] the projection $s_m: (\Sigma_m,\tilde{g}(\nu,\kappa)) \to (B,\kappa^2g_{\mathrm{FS}})$ is a Riemannian submersion with totally geodesic fibers.
\end{itemize}
We refer to \cite{MR1030671} for further details on the geometry of torus bundles over the product of K{\"a}hler--Einstein manifolds with positive first Chern class.

Given two smooth, positive functions $f,h : (0,L) \to \mathbb{R}$, the formula
\begin{equation} \label{eq:cohom1}
g(f,h) \coloneqq  dr^2 +\tilde{g}(f(r),h(r))
\end{equation}
defines a $\mathsf{U}(2)$-invariant Hermitian metric on $M_m^{\mathrm{reg}}$, which extends smoothly to the whole manifold $M_m$ if and only if the following \emph{smoothness conditions} are satisfied: \begin{itemize}
\item[i)] $f$ is the restriction of a smooth odd function on $\mathbb{R}$ satisfying
$$
f(L+r)=-f(L-r) \quad \text{and} \quad f'(0)=1=-f'(L)  ,
$$
\item[ii)] $h$ is the restriction of a smooth even function on $\mathbb{R}$ satisfying $h(L+r) = h(L-r)$.
\end{itemize}

Hermitian metrics of the form \eqref{eq:cohom1} were studied in \cite{MR4567558}. Note that a different normalization of the K{\"a}hler--Einstein metric on $\mathbb{CP}^1$ is used in \cite{MR4567558}, which leads to slightly different constants in the corresponding formulas.

By \cite[Theorem A and (3.11)]{MR4567558}, the metric $g(f,h)$ is globally conformally K{\"a}hler, and it is K{\"a}hler if and only if
\begin{equation}
2h(r)h'(r) +mf(r) = 0  .
\end{equation}
Moreover, by \cite[Proposition 3.15]{MR4567558}, the Chern scalar curvature of $g(f,h)$ is given by
\begin{equation} \label{eq:scal}
\begin{split}
\mathrm{scal}^{\mathrm{Ch}}(r) =& -2\tfrac{f''(r)}{f(r)} -2\tfrac{h''(r)}{h(r)} +2\left(\tfrac{h'(r)}{h(r)} -\tfrac{f'(r)}{f(r)}\right)\tfrac{h'(r)}{h(r)} +4\tfrac1{h(r)^2} \\
& +2m\left(f'(r) +f(r)\tfrac{h'(r)}{h(r)}\right) \tfrac1{h(r)^2}   ,
\end{split}
\end{equation}
while, by the proof of \cite[Proposition 4.6]{MR4567558}, the codifferential of the Lee form is given by
\begin{equation} \label{eq:codiffLee}
d^*\theta(r) = \tfrac1{h(r)^2} \Big(\big(2h(r)h'(r) +mf(r)\big)\tfrac{f'(r)}{f(r)} +\big(2h(r)h''(r) +2h'(r)^2 +mf'(r)\big)\Big)  .
\end{equation}

The main result of this section is the following existence theorem.

\begin{theorem}\label{thm:ex}
For every integer $m >0$, the $m$-Hirzebruch surface $M_m$ admits a $\mathsf{U}(2)$-invariant globally conformally K{\"a}hler metric with positive constant momentum.
\end{theorem}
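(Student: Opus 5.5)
The plan is to reduce the condition $\mu^4(g(f,h)) = c$ to an ODE system in $r$, and then produce a solution by a variational/existence argument restricted to $\mathsf{U}(2)$-invariant data, rather than by solving the ODE directly.

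\textbf{Reduction.} For $n=2$ we have $\mu^4 = \mathrm{scal}^{\mathrm{Ch}} + 2\,d^*\theta$. Substituting \eqref{eq:scal} and \eqref{eq:codiffLee} gives an explicit second-order expression in $f$ and $h$. Two features of this expression will be used:
\begin{itemize}
\item the $f''$-term is $-2f''/f$;
\item the $h''$-terms combine as $-2h''/h + 4h''/h = 2h''/h$.
\end{itemize}

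\textbf{Conformal reduction.} Since every $g(f,h)$ is globally conformally K\"ahler, I would fix a $\mathsf{U}(2)$-invariant K\"ahler reference metric $g_0 = g(f_0,h_0)$ with $2h_0h_0' + m f_0 = 0$. The momentum problem in its conformal class then becomes equation \eqref{eq:t-DY} with $t=4$, $n=2$:
$$
8\Delta u + \mathrm{scal}(g_0)\,u = c\,u^3 ,
$$
where $u = u(r)$ is $\mathsf{U}(2)$-invariant. Indeed, conformal rescalings by functions of $r$ preserve the cohomogeneity-one form \eqref{eq:cohom1}, after reparametrising $r$ by arclength.

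\textbf{Existence.} I would minimise $\mathrm{EH}^4$ over $\mathsf{U}(2)$-invariant positive functions. By the principle of symmetric criticality, any minimiser is a critical point of the full functional, hence a solution.

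The key point is compactness. Invariant functions are functions of one variable $r\in[0,L]$, and the principal orbits are $3$-dimensional. Away from the two singular orbits $\mathbb{CP}^1$, the effective dimension is $1$. Near a singular orbit, the effective dimension is $2$ (the normal disc), since the singular orbit is $2$-dimensional. In either case the critical exponent $q=4$ is subcritical for the invariant Sobolev embedding, so the invariant embedding $W^{1,2}\hookrightarrow L^4$ is compact and the direct method applies.

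This gives a positive smooth minimiser $u$, by the maximum principle and elliptic regularity, and hence an invariant metric with constant momentum $c$. The resulting metric is globally conformally K\"ahler, and it is not K\"ahler unless $u$ is constant.

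\textbf{Sign of $c$.} By Remark \ref{rem:lambda_value}, $c$ has the sign of the invariant infimum. Testing with $u=1$ gives
$$
\mathrm{EH}^4(1) = \int \mathrm{scal}(g_0) = 4\pi\, c_1\cdot[\omega_0] .
$$
This is positive, since Hirzebruch surfaces carry K\"ahler metrics of positive total scalar curvature. However, positivity of $\mathrm{EH}^4(1)$ only bounds the infimum from above; it does not show the infimum is positive.

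To show $c>0$ I would choose $g_0$ to be a Calabi extremal, or at least positive scalar curvature, $\mathsf{U}(2)$-invariant K\"ahler metric. Hirzebruch surfaces admit K\"ahler metrics with $\mathrm{scal}>0$, and Calabi's ansatz gives invariant ones. Then $\boldsymbol{\mu}^4_{\mathrm{tot}}(u) \geq \min \mathrm{scal}(g_0)\,\|u\|_{L^2}^2 > 0$ for every $u$, so $c>0$.

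It remains to see that the minimiser is genuinely non-homogeneous. The metric is not homogeneous, since $\mathsf{U}(2)$ acts with cohomogeneity one; homogeneity would require a larger isometry group, which Hirzebruch surfaces with $m>0$ do not admit.

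The main obstacle I expect is making the compactness and equivariant-regularity step rigorous near the singular orbits, and checking that the invariant minimiser extends smoothly, i.e. that the smoothness conditions on $f$ and $h$ are satisfied. The fallback is a direct shooting argument on the ODE in $u(r)$ with Neumann conditions $u'(0)=u'(L)=0$.
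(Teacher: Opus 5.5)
Your proposal is correct, but it takes a genuinely different route from the paper.

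\textbf{The paper's route.} It never treats the PDE on $M_m$. It restricts to the explicit cohomogeneity-one ansatz $f=-\phi'/m$, $h=\sqrt{5\phi/2}$, which is non-K\"ahler by construction since $2hh'+mf=\tfrac32\phi'<0$. It turns $\mu^4=c$ into a third-order ODE for $\phi$, and the substitution $\phi'=-\sqrt{u(\phi)}$ reduces this to an Euler equation integrated in closed form. Three of the four boundary conditions coming from the smoothness conditions fix $a,b,c$ in terms of $k=\phi(L)$. The fourth is met by an intermediate value argument on $\alpha(k)$. Finally, $c_*>0$ and $u_*>0$ follow from elementary maximum/minimum arguments.

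\textbf{Your route.} You take a $\mathsf{U}(2)$-invariant K\"ahler reference $g_0$ with $\mathrm{scal}(g_0)>0$, for instance a Calabi-ansatz metric with sufficiently small fibres. You use $\mu^4(\omega_0)=\mathrm{scal}(g_0)$ and minimise $\mathrm{EH}^4$ over invariant functions. The compactness you need is exactly the Hebey--Vaugon theorem: the minimal orbit dimension is $2$ in real dimension $4$, so invariant $W^{1,2}$ functions embed compactly into every $L^p$. The argument is then completed by averaging test functions over $\mathsf{U}(2)$, elliptic regularity, the strong maximum principle, and $c=\boldsymbol{\mu}^4_{\mathrm{tot}}(u)/\|u\|_{L^4}^4>0$.

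\textbf{Points to tidy.}
\begin{itemize}
\item The obstacle you anticipate is not there. Your $u$ is a smooth positive function on $M_m$ itself, so $u^2\omega_0$ is automatically a smooth invariant metric, and no smoothness conditions on $f,h$ need checking.
\item The ``Reduction'' paragraph and the homogeneity discussion are unused and can be dropped.
\item Non-K\"ahlerness is not required by the statement. If you want it, note that constant $u$ would make $g_0$ cscK, which Matsushima's theorem excludes on $M_m$ for $m>0$.
\end{itemize}

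\textbf{What each route buys.} Yours is more general. It works verbatim for every $t>0$, since $\mu^t$ of a K\"ahler metric is its scalar curvature. It applies in every invariant conformal class carrying a positive-scalar-curvature K\"ahler metric. It needs no a priori bound $\lambda^4<4\Lambda_2$, which matters because the Aubin-type criterion of Theorem \ref{thm:aubins_crit} is unavailable for $n=2$. The paper's route gives a self-contained, fully explicit profile. It relies neither on equivariant Sobolev theory nor on the external existence of invariant positive-scalar-curvature K\"ahler metrics.
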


\begin{proof}
Consider the ansatz
\begin{equation} \label{eq:ans1}
f_{\phi}(r) \coloneqq -\frac{1}{m}\phi'(r)  , \quad h_{\phi}(r) \coloneqq \sqrt{\frac52\phi(r)}
\end{equation}
for some smooth, decreasing, positive, function $\phi: (0,L) \to \mathbb{R}$. Notice that
$$
2h_{\phi}(r)h_{\phi}'(r) +mf_{\phi}(r) = \frac32\phi'(r) < 0 \quad \text{for all $0 < r < L$}
$$
and so the corresponding metric $g(f_{\phi},h_{\phi})$ is necessarily non-K{\"a}hler. Then, by \eqref{eq:moment}, \eqref{eq:scal} and \eqref{eq:codiffLee}, the equation
$$
\mu^4\big(g(f_{\phi},h_{\phi})\big) = c  , \quad c \in \mathbb{R}
$$
becomes
\begin{equation} \label{eq:c1costmu}
10\,\phi(r)^2\frac{\phi'''(r)}{\phi'(r)} +2\,\phi(r)\phi''(r) -3\,\phi'(r)^2 +5c\,\phi(r)^2 -8\,\phi(r) = 0  .
\end{equation}
Let $0 < k < 1$, to be fixed later, and set $\phi(0) = 1$, $\phi(L) = k$. Then, the  smoothness conditions for $f,h$ imply that
\begin{equation} \label{eq:boundcond} \begin{gathered}
\phi(0)=1  , \quad \phi'(0)=0  , \quad \phi''(0)=-q  , \\
\phi(L)=k  , \quad \phi'(L)=0  , \quad \phi''(L)=q  .
\end{gathered}\end{equation}
We remark that, if there exists a smooth solution $\phi: [0,L] \to \mathbb{R}$ to \eqref{eq:c1costmu} satisfying the boundary conditions \eqref{eq:boundcond}, then it can be extended to a smooth even function on $\mathbb{R}$ satisfying $\phi(L + r)=\phi(L - r)$. Assume further that $\phi$ is a solution of the ODE
\begin{equation} \label{eq:ans2}
\phi'(r)=-\sqrt{u(\phi(r))}
\end{equation}
for some smooth, positive real function $u=u(t)$. Then, under the change of variable $t=\phi(r)$, the equation \eqref{eq:c1costmu} becomes
\begin{equation} \label{eq:c1costmu'}
5t^2u''(t) +tu'(t) -3u(t) +5ct^2 -8t =0  ,
\end{equation}
which can be explicitly integrated:
\begin{equation} \label{eq:sol-u}
u_{a,b,c}(t) = a\, t^{\frac{2-\sqrt{19}}{5}} -4t +b\, t^{\frac{2+\sqrt{19}}{5}}  -\frac59ct^2  , \quad a, b, c \in \mathbb{R}  .
\end{equation}
Notice that, by \eqref{eq:ans2}, equation \eqref{eq:boundcond} implies that the corresponding solution to \eqref{eq:c1costmu'} has to verify
$$
u(1)=0  , \quad u(k)=0  , \quad u'(1)=-2m   , \quad u'(k)=2m  .
$$
By imposing the first three conditions
$$
u_{a,b,c}(1)=0  , \quad u_{a,b,c}(k)=0  , \quad u_{a,b,c}'(1)=-2m
$$
at the general solution \eqref{eq:sol-u}, we obtain the following three values $a(k), b(k), c(k)$ depending on $k$:
$$\begin{aligned}
a(k) &\coloneqq \frac{(10m +4(\sqrt{19}-3))\,k^{\frac{8+\sqrt{19}}{5}} -10(m+2)\,k^{\frac{2\sqrt{19}}{5}}
+4(8 -\sqrt{19}) k^{\frac{3+\sqrt{19}}{5}}}{2\sqrt{19} k^{\frac{8+\sqrt{19}}{5}} -(8+\sqrt{19})\,k^{\frac{2\sqrt{19}}{5}} +(8-\sqrt{19})}  , \\
b(k) &\coloneqq \frac{-(8+\sqrt{19})\,a(k) +10(m+2)}{8-\sqrt{19}}  , \\
c(k) &\coloneqq \frac{9}{50}\big(-(\sqrt{19}-2)a(k) +(\sqrt{19}+2)b(k) +10(m-2)\big)  .
\end{aligned}$$
Set $u_k \coloneqq u_{a(k),b(k),c(k)}$ and observe that
$$
u_k'(k)-2m = \frac{\alpha(k)}{\beta(k)}  ,
$$
where
$$\begin{aligned}
\alpha(k) \coloneqq &\,
2\big((8-\sqrt{19})m -2(\sqrt{19}+1)\big)\,k^{1+\frac{2\sqrt{19}}{5}} -4\sqrt{19}(m-2)\,k^{\frac{8+\sqrt{19}}{5}} \\
& +2\big((8+\sqrt{19})m-2(\sqrt{19}-1)\big)\,k^{\frac{2\sqrt{19}}{5}} -2\big((8+\sqrt{19})m+2(\sqrt{19}-1)\big)\,k \\
& +4\sqrt{19}(m+2)\,k^{\frac{\sqrt{19}-3}{5}} -2\big((8-\sqrt{19})\,m+2(\sqrt{19}+1)\big)
\end{aligned}$$
and
$$
\beta(k) \coloneqq 2\sqrt{19}\,k^{\frac{8+\sqrt{19}}{5}} -(8+\sqrt{19})\,k^{\frac{2\sqrt{19}}{5}} +(8-\sqrt{19})  .
$$
We notice that
$$\begin{gathered}
\alpha(1) = \alpha'(1) = \alpha''(1) = 0  , \\
\alpha'''(1) = -\frac{36\sqrt{19}}{25}m < 0  , \\
\alpha(0) = -2\big(2(\sqrt{19}+1)+(8-\sqrt{19})\,m\big) < 0  ,
\end{gathered}$$
and so there exists $k_* \in (0,1)$ such that $\alpha(k_*) = 0$. Moreover
$$\begin{gathered}
\beta(0) = 8-\sqrt{19} > 0  , \quad \beta(1) = 0  , \\
\beta'(k) = -\frac{2\sqrt{19}(8+\sqrt{19})}{5}\big(1-k^{\frac{8-\sqrt{19}}{5}}\big)k^{\frac{2\sqrt{19}-5}{5}} < 0 \quad \text{for all $0 \leq k < 1$}
\end{gathered}$$
and so $\beta(k) >0$ for all $0 < k < 1$. In particular, $\beta(k_*) >0$.

Therefore, the function
$$
u_* : [k_*,1] \to \mathbb{R}  , \quad u_*(t) \coloneqq  u_{k^*}(t)
$$
verifies 
\begin{equation} \label{eq:ODEu*}
5t^2u_*''(t) +tu_*'(t) -3u_*(t) +5c_*t^2 -8t =0 \quad \text{for all $t \in [k_*,1]$}  ,
\end{equation}
with $c_* \coloneqq c(k_*)$, and
\begin{equation} \label{eq:boundODEu*}
u_*(k_*)=0  , \quad u_*(1)=0  , \quad u_*'(k_*)=2m   , \quad u_*'(1)=-2m  .
\end{equation}
By the boundary conditions \eqref{eq:boundODEu*}, it follows that there exists a maximum point $t_0 \in (k_*,1)$ for $u_*$ such that $u_*'(t) \geq 0$ for all $t \in [k_*,t_0]$ and $u_*(t_0)>0$. Therefore, from \eqref{eq:ODEu*} we get
$$
5c_*t_0^2 = -5t_0^2u_*''(t_0) +3u_*(t_0) +8t_0 > 8t_0
$$
and so $c_*>0$ and $t_0 > \frac{8}{5c_*}$.

We claim that $u_*(t)>0$ for all $t \in (k_*,1)$. Indeed, assume by contradiction that the function $u_*$ is not positive on $(k_*,1)$. Then, by the boundary conditions \eqref{eq:boundODEu*}, it follows that there exists a minimum point $t_1 \in (t_0,1)$ for $u_*$ such that $u_*(t_1) \leq 0$. Then, from \eqref{eq:ODEu*} we get
$$
0 \leq 5t_1^2u_*''(t_1) = 3u_*(t_1) -5c_*t_1^2 +8t_1 \leq t_1(8 -5c_*t_1)
$$
and so $t_1 \leq \frac{8}{5c_*} < t_0$, which contradicts the fact that $t_0 < t_1$.

Therefore, the function
$$
\varphi_*: [k_*,1] \to [0,L]  , \quad \varphi_*(r) \coloneqq \int_r^1 \frac{ d  t}{\sqrt{u_*(t)}}
$$
is smooth, decreasing and its inverse $\phi_* \coloneqq \varphi_*^{-1}$ solves the ODE \eqref{eq:c1costmu} with the boundary conditions \eqref{eq:boundcond}. Therefore, the $\mathsf{U}(2)$-invariant metric $g(f_{\phi_*},h_{\phi_*})$ is globally conformally K{\"a}hler, extends smoothly over the singular orbits, and satisfies $\mu^4\big(g(f_{\phi_*},h_{\phi_*})\big) = c_*>0$.
\end{proof}

\appendix

\section{}
\setcounter{equation} 0

\subsection{Local holomorphic coordinates} \hfill \par

Let $(M,J,\omega)$ be a compact Hermitian manifold. Fix a choice of local holomorphic coordinates $\{z^i\}_{i=1,{\dots},n}$ on $(M,J)$, and write
$$
\omega = h_{i\bar{j}}\,\sqrt{-1}\,dz^i \wedge d\bar{z}^{j} \,\, .
$$
Then the Chern curvature tensor $\Theta$ defined in \eqref{eq:Theta} is given by
$$
\Theta = \Theta_{i\bar{j}k\bar{\ell}}\,\sqrt{-1}\,dz^i \wedge d\bar{z}^{j} \otimes \sqrt{-1}\,dz^k \wedge d\bar{z}^{\ell} \,\, ,
$$
where
\begin{equation} \label{eq:Thetaijkl}
\Theta_{i\bar{j}k\bar{\ell}} = -\frac{\partial^2h_{k\bar{\ell}}}{\partial z^i \partial \bar{z}^j} +h^{\bar{q}p} \frac{\partial h_{p\bar{\ell}}}{\partial \bar{z}^j} \frac{\partial h_{k\bar{q}}}{\partial z^i} \,\, .
\end{equation}
The Chern scalar curvature $\mathrm{scal}^{\mathrm{Ch}}$ is then given by
\begin{equation} \label{eq:scalChcoord}
\mathrm{scal}^{\mathrm{Ch}} = 2\,h^{\bar{j}i}h^{\bar{\ell}k}\Theta_{i\bar{j}k\bar{\ell}} \,\, .
\end{equation}
We also mention that, because of the lack of the K{\"a}hler symmetries for $\Theta$, there is another natural contraction, locally defined as
\begin{equation} \label{eq:scalChcoord2}
\widetilde{\mathrm{scal}}{}^{\mathrm{Ch}} \coloneqq 2\,h^{\bar{\ell}i}h^{\bar{j}k}\Theta_{i\bar{j}k\bar{\ell}} \,\, .
\end{equation}
With the above normalization, \cite[Eq.\ (19)]{MR0742896} gives
\begin{equation} \label{eq:scalCh-scalCh}
\widetilde{\mathrm{scal}}{}^{\mathrm{Ch}} = \mathrm{scal}^{\mathrm{Ch}} -d^*\theta -|\theta|^2
\end{equation}
and hence, in particular, $\widetilde{\mathrm{scal}}{}^{\mathrm{Ch}} = \mathrm{scal}^{\mathrm{Ch}}$ if the metric $\omega$ is balanced.

The torsion $T$ defined in \eqref{eq:T} is given by
$$
T = T_{ij}^k \,dz^i \wedge dz^j \otimes \frac{\partial}{\partial z^k}
+\overline{T_{rs}^{\ell}} \,d\bar{z}^r \wedge d\bar{z}^s \otimes \frac{\partial}{\partial \bar{z}^{\ell}} \,\, ,
$$
where
\begin{equation} \label{eq:Tcoord}
T_{ij}^k = h^{\bar{\ell}k}\left(\frac{\partial h_{j\bar{\ell}}}{\partial z^i}-\frac{\partial h_{i\bar{\ell}}}{\partial z^j}\right) \,\, .
\end{equation}
Consequently, the Lee form $\theta$ defined in \eqref{eq:theta} is given by
$$
\theta = \theta_i \,dz^i +\overline{\theta_j} \,d\bar{z}^j \,\, , \quad \text{with} \quad \theta_i = T_{ik}^k \,\, ,
$$
and the norms of $T$ and $\theta$ are
\begin{equation} \label{eq:|T|coord}
|T|^2 = 2\,h^{\bar{j}i}h^{\bar{q}p}h_{r\bar{s}}T_{ip}^r\overline{T_{jq}^s} \,\, , \quad |\theta|^2 = 2\,h^{\bar{j}i} \theta_i\overline{\theta_j} \,\, .
\end{equation}

We record some explicit curvature computations on the classical Hopf manifold, which are used in Remark \ref{rem:Hopf}.

\begin{example}[The Hopf manifold] \label{ex:Hopf}
Fix any $\alpha \in \mathbb{C}$, with $0<|\alpha|<1$, and consider the action of $\mathbb{Z}$ on $\mathbb{C}^n$ generated by $z \mapsto \alpha z$. Then the \emph{classical Hopf manifold} is given by the quotient $X \coloneqq (\mathbb{C}^n \setminus \{0\})/\mathbb{Z}$. Both the canonical complex structure and the Hermitian metric
\begin{equation} \label{eq:Hopfmetric}
\omega \coloneqq |z|^{-2}\delta_i^j\,\sqrt{-1}\,dz^i\wedge d\bar{z}^j
\end{equation}
of $\mathbb{C}^n \setminus \{0\}$ are $\mathbb{Z}$-invariant, and so descend to the quotient and define a Vaisman structure on $X$ (see, \emph{e.g.}, \cite{MR0418003}). Notice that $X$ is diffeomorphic to $S^1 \times S^{2n-1}$ for every choice of $\alpha$, and that the standard Hermitian structure considered in Remark \ref{rem:Hopf} arises as a particular case of this construction. By using the standard coordinates $\{z_i\}_{i=1,{\dots},n}$ on $\mathbb{C}^n$, by \eqref{eq:Hopfmetric} one gets
$$
h_{i\bar{j}} = |z|^{-2}\delta_i^j \,\, , \quad
h^{\bar{j}i} = |z|^2\delta_j^i \,\, ,
$$
and so standard computations based on \eqref{eq:Thetaijkl} and \eqref{eq:Tcoord} give
$$
\Theta_{i\bar{j}k\bar{\ell}} = |z|^{-6}\Big(|z|^2\delta_{i}^{j} -\bar{z}^{i}z^{j}\Big)\delta_{k}^{\ell} \,\, , \quad T_{ij}^k = -|z|^{-2}\Big(\bar{z}^{i}\delta_{j}^{k} -\bar{z}^{j}\delta_{i}^{k}\Big) \,\, .
$$
Therefore, by \eqref{eq:scalChcoord}, \eqref{eq:|T|coord} and the fact that $\omega$ is Vaisman, one obtains
$$
\mathrm{scal}^{\mathrm{Ch}}(\omega) = 2n(n-1) \,\, , \quad
d^*\theta = 0 \,\, , \quad
|T|^2 = 4(n-1) \,\, .
$$
Hence, by \eqref{eq:mu_t}, the $t$-deformed scalar curvature of $\omega$ is
$$
\mu^t(\omega) = t\,(n-1) \,\, .
$$
\end{example}

We are now ready to prove Lemma \ref{lem:confchange}, namely, to derive the formulas for the conformal change of $\mathrm{scal}^{\mathrm{Ch}}$, $d^*\theta$, and $|T|^2$.

\begin{proof}[Proof of Lemma \ref{lem:confchange}]
Let $(M^{2n},J,\omega)$ be a Hermitian manifold, let $f:M \to \mathbb{R}$ be a smooth function and set $\omega' \coloneqq e^f \omega$. Fix a choice of local holomorphic coordinates $\{z^i\}_{i=1,{\dots},n}$ on $(M,J)$ and write
$$
\omega = h_{i\bar{j}}\,\sqrt{-1}\,dz^i \wedge d\bar{z}^{j} \,\, , \quad
\omega' = h'_{i\bar{j}}\,\sqrt{-1}\,dz^i \wedge d\bar{z}^{j} \,\, ,
$$
with
\begin{equation} \label{eq:hh'f}
h'_{i\bar{j}} = e^f\,h_{i\bar{j}} \quad \text{and} \quad
h'{}^{\bar{j}i} = e^{-f}\,h^{\bar{j}i} \,\, .
\end{equation}
A direct computation based on \eqref{eq:Thetaijkl}, \eqref{eq:scalChcoord} and \eqref{eq:hh'f} shows that
$$
\mathrm{scal}^{\mathrm{Ch}}(\omega') = e^{-f} \Big(\mathrm{scal}^{\mathrm{Ch}}(\omega) -2nh^{\bar{j}i}\frac{\partial^2 f}{\partial z^i\partial \bar{z}^j}\Big) \,\, .
$$
Moreover, by \cite[p.\ 502--503]{MR0742896}, we have 
$$
-2h^{\bar{j}i}\frac{\partial^2 f}{\partial z^i\partial \bar{z}^j} = \Delta f +g(\nabla f, \theta^{\sharp})
$$
and so \eqref{eq:confscalCh} follows. Analogously, \eqref{eq:confT} follows from a direct computation based on \eqref{eq:Tcoord}, \eqref{eq:|T|coord} and \eqref{eq:hh'f}.

Finally, to prove \eqref{eq:confcodiftheta}, we notice that the Lee form $\theta'$ of $\omega'$ is
$$
\theta' = \theta + (n-1) d f .
$$
and that
$$
\mathrm{div}_{g'}X = \mathrm{div}_{g}X + n\, d f(X)  , \quad \mathrm{div}_{g}(\phi X) = \phi\, \mathrm{div}_{g}(X) +  d \phi (X) \,\, .
$$
Therefore, we get
\begin{align*}
d^*_{g'}\theta' &= -\mathrm{div}_{g'}((\theta')^{\sharp_{g'}}) \\
&= -\mathrm{div}_{g}\left(e^{-f}\left(\theta^{\sharp_g}+ (n-1)\nabla^g f\right)\right) - ne^{-f} d f\left(\theta^{\sharp_g}+ (n-1)\nabla^g f\right) \\
&= e^{-f}\left(d^{*}_g\theta+(n-1)\Delta_gf\right) + e^{-f}(n-1)|\nabla^g f|_g^2 + e^{-f}g(\nabla^gf, \theta^{\sharp_{g}}) \\
&\qquad - ne^{-f} \left(g(\nabla^gf, \theta^{\sharp_{g}})+ (n-1)|\nabla^gf|_g^2\right) \\
&= e^{-f} \left(d^*_{g}\theta +(n-1)\Delta_gf -(n-1)g(\nabla^gf, \theta^{\sharp_{g}}) -(n-1)^2|\nabla^gf|_g^2 \right) \,\, ,
\end{align*}
which concludes the proof.
\end{proof}

\subsection{Proof of Theorem \ref{thm:aubins_crit}} \hfill \par

The aim of this section is to prove the following result, following the argument in \cite{MR0431287} and using a special choice of holomorphic coordinates adapted to the non-K{\"a}hler setting. For notational convenience, we denote by $\sigma_{2n}$ the square of the $2n$-dimensional best Sobolev constant, \emph{i.e.},
\begin{equation} \label{eq:Sobconst}
\frac{1}{\sigma_{2n}} \coloneqq n(n-1)\mathrm{Vol}(S^{2n}_1)^{\frac{1}{n}} \,\, .
\end{equation}

\begin{theorem}[\cite{MR0431287}] \label{thm:bubble_test}
Let $(M^{2n},J,\omega)$ be a compact Hermitian manifold of complex dimension $n$, with $n \geq 3$.
Fix a smooth function $\psi$ and a positive constant $k>0$, and consider the equation
\begin{equation}\label{eq:yamabegeneral}
k\Delta u + \psi\,u = \lambda\, u^{q-1} \,\, ,
\end{equation}
where $q = 2^* = \frac{2n}{n-1}$.
If there exists a point $p\in M$ such that
\begin{equation}
\frac{2(2n-1)}{n-1}\psi(p) - k\, \mathrm{scal}(g_\omega)_p < 0 \,\, ,
\end{equation}
then
\begin{equation} \label{eq:subcrit-appendix}
\inf_{\substack{u \in \mathcal{C}^{\infty}(M,\mathbb{R}) \\ u>0}}
\frac{\int_M\left(\psi u^2+k|\nabla u|^2\right)\frac{\omega^n}{n!}}
{\|u\|^2_{L^q}}
< \frac{k}{\sigma_{2n}} \,\, ,
\end{equation}
where $\sigma_{2n}$ is as in \eqref{eq:Sobconst}.
\end{theorem}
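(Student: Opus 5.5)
The plan follows Aubin's test-function argument. We evaluate the quotient in \eqref{eq:subcrit-appendix} on a truncated, rescaled bubble concentrated at $p$, and expand it in the concentration parameter. Since the statement only involves $\psi(p)$ and $\mathrm{scal}(g_\omega)_p$, which are Riemannian quantities at a point, we work with the Riemannian metric $g=g_\omega$. We use geodesic normal coordinates centred at $p$; holomorphic normal coordinates could be used instead, but the quotient depends only on the Riemannian data. In these coordinates
$$\sqrt{\det g}=1-\tfrac16 \mathrm{Ric}_{ij}(p)x^ix^j+O(|x|^3).$$
Fix a small $\delta>0$ and a cutoff $\eta$ supported in $B_{2\delta}(p)$ with $\eta\equiv1$ on $B_\delta(p)$. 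For $\varepsilon\to0$ we take
$$u_\varepsilon(x)=\eta(|x|)\,(\varepsilon^2+|x|^2)^{-\frac{n-1}{2}},$$
which is the extremal for the Euclidean Sobolev inequality in real dimension $N=2n\ge6$, with exponent $-\frac{N-2}{2}=-(n-1)$. We note that $\frac{k}{\sigma_{2n}}$ is exactly $k\,K^{-2}$, where $K$ is the best constant in $\|v\|_{L^q(\mathbb R^{2n})}\le K\|\nabla v\|_{L^2}$.

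The steps, in order, are as follows.

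\textbf{Step 1 (gradient term).} Compute $\int k|\nabla u_\varepsilon|^2\,d\mathrm{vol}_g$ in radial form. Because $u_\varepsilon$ is radial in normal coordinates, $|\nabla u_\varepsilon|_g^2=(\partial_r u_\varepsilon)^2$ by the Gauss lemma. Integrating the volume expansion over spheres turns the Ricci term into $-\frac{\mathrm{scal}(p)}{6N}r^2$. The result is
$$\int k|\nabla u_\varepsilon|^2=k\,\varepsilon^{-(N-2)}\Big(A-\tfrac{\mathrm{scal}(p)}{6N}B\,\varepsilon^2+o(\varepsilon^2)\Big),$$
where $A$ and $B$ are explicit Beta-function integrals. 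Here we need $N>4$ for the $r^2$-weighted integrals to converge, which is where $n\ge3$ enters. The cutoff contributes only $O(1)$, which is $o(\varepsilon^{-(N-2)+2})$ exactly when $N>4$.

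\textbf{Step 2 (potential term).} Compute $\int\psi u_\varepsilon^2=\varepsilon^{-(N-2)}\big(\psi(p)\,C\varepsilon^2+o(\varepsilon^2)\big)$, with $C$ a third Beta integral. This again uses $N>4$ so that $\int_{\mathbb R^N}(1+|y|^2)^{-(N-2)}dy<\infty$.

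\textbf{Step 3 (denominator).} Compute
$$\|u_\varepsilon\|_{L^q}^2=\varepsilon^{-(N-2)}\Big(D-\tfrac{\mathrm{scal}(p)}{6N}E\,\varepsilon^2+o(\varepsilon^2)\Big)^{2/q}.$$

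\textbf{Step 4 (assemble).} Form the quotient. The leading term is $kA/D^{2/q}=k/\sigma_{2n}$, since the bubble is extremal. Expanding to order $\varepsilon^2$ gives
$$Q(u_\varepsilon)=\frac{k}{\sigma_{2n}}+\varepsilon^2\,c_N\Big(\psi(p)-\frac{n-1}{2(2n-1)}k\,\mathrm{scal}(p)\Big)+o(\varepsilon^2),$$
with a constant $c_N>0$. The bracket is a positive multiple of $\frac{2(2n-1)}{n-1}\psi(p)-k\,\mathrm{scal}(p)$. Hence the hypothesis makes the correction negative for small $\varepsilon$, which proves \eqref{eq:subcrit-appendix}.

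The main obstacle is Step 4: pinning down the exact ratio between the scalar-curvature coefficient and the $\psi$ coefficient. This requires the Beta-function identities relating $A,B,C,D,E$. As a guide we use the classical Yamabe case $k=\frac{2(2n-1)}{n-1}$, $\psi=\mathrm{scal}$. There the correction must equal Aubin's $-c\,|W|^2\varepsilon^4$-type term at order $\varepsilon^2$, namely zero, so the bracket must vanish identically. This forces the ratio $\frac{n-1}{2(2n-1)}$, and we check the remaining Beta identity against it.
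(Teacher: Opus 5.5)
Your proposal is correct. It follows Aubin's original Riemannian route rather than the paper's.

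\textbf{How the two routes differ.} Both arguments evaluate the quotient on a truncated bubble and expand to order $\varepsilon^2$. The paper works in \emph{holomorphic} normal coordinates, where only the symmetrised first derivatives of $h_{i\bar j}$ vanish at $p$. It therefore needs:
\begin{itemize}
\item the second-order expansions of $h_{i\bar j}$, $h^{\bar j i}$ and $\det(h_{m\bar s})$ in terms of Chern curvature and torsion;
\item two separate angular averages $\Phi$ and $\Psi$ (the second because $U_\varepsilon(|z|)$ is not $g$-radial, so $|\nabla u_\varepsilon|^2$ involves $h^{\bar j i}z^j\bar z^i$);
\item the identity $2n\big((n-2)\Phi-(n+1)\Psi\big)=\mathrm{scal}(g_\omega)$, which rests on Gauduchon's relations among $\mathrm{scal}$, $\mathrm{scal}^{\mathrm{Ch}}$, $\widetilde{\mathrm{scal}}{}^{\mathrm{Ch}}$, $d^*\theta$, $|\theta|^2$ and $|T|^2$.
\end{itemize}
You instead observe that the quotient is purely Riemannian: $\omega^n/n!=d\mathrm{vol}_g$, and $|\nabla u|^2$ is the $g$-norm. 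So geodesic normal coordinates, the Gauss lemma and $\sqrt{\det g}=1-\frac16\mathrm{Ric}_{ij}(p)x^ix^j+O(|x|^3)$ suffice, and $\mathrm{scal}(g_\omega)_p$ appears directly. Your route is shorter and makes clear that nothing Hermitian is involved. The paper's route stays within holomorphic coordinates and yields the Chern-type expansions $\Phi,\Psi$ as a by-product, at the cost of considerably more bookkeeping.

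\textbf{Two small corrections.} First, your displayed test function should be $u_\varepsilon=\eta(|x|)\,(\varepsilon^2+|x|^2)^{-(n-1)}$, as you say in words, not with exponent $-\frac{n-1}{2}$.

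Second, Step 4 does not need the Yamabe case as a guide. That argument is in fact sound if you take Aubin's vanishing of the $\varepsilon^2$ term as known, because the coefficient has the form $\alpha\,\psi(p)+\beta\,k\,\mathrm{scal}(p)$ with universal constants $\alpha>0$ and $\beta$. But the direct computation is short. With the paper's integrals $I_k$ and $J_0$ one has $I_1/I_0=\frac{n}{n-1}$, $I_2/I_1=\frac{n+1}{n-2}$ and $J_0/I_0=\frac{2(2n-1)}{n-2}$. These give
\[
Q(u_\varepsilon)=\frac{k}{\sigma_{2n}}\Big(1+\frac{\varepsilon^2}{4n(n-2)}\Big(\frac{2(2n-1)}{n-1}\,\frac{\psi(p)}{k}-\mathrm{scal}(g_\omega)_p\Big)+o(\varepsilon^2)\Big),
\]
which is your Step 4 with an explicit positive $c_N$. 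The paper obtains $2n(n-2)$ in place of $4n(n-2)$ because, in its coordinates, $g_p$ is twice the Euclidean metric, so its $\varepsilon^2$ corresponds to your $2\varepsilon^2$.
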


\begin{remark}
Notice that Theorem \ref{thm:aubins_crit} follows immediately from Theorem \ref{thm:bubble_test}. Indeed, if we set 
$$
\psi = \mu^t(\omega) \quad \text{and} \quad k = \frac{2t}{n-1} \,\, ,
$$
then, by using \eqref{eq:scal-scalCh}, \eqref{eq:mu_t} and \eqref{eq:general_ineq_yamabe}, a direct computation shows that
\begin{align*}
\frac{2(2n-1)}{n-1}\psi - k\, \mathrm{scal}(g_\omega)
= -\frac{2(t-2n+1)}{n-1}\mu^0(\omega) \,\, , \quad \frac{k}{\sigma_{2n}} = t\,\Lambda_n \,\, .
\end{align*}
This gives precisely the assertion of Theorem \ref{thm:aubins_crit}.
\end{remark}

In order to prove Theorem \ref{thm:bubble_test}, we recall the following result on the existence of special coordinates in the non-K{\"a}hler setting (see also \cite[Lemma 3.4]{MR3632564}).

\begin{lemma} \label{lem:locholnorm1}
Let $(M^{2n},J,\omega)$ be a Hermitian manifold. For every point $p \in M$, there exist local holomorphic coordinates for $(M,J)$ centred at $p$ such that the local component $h_{i\bar{j}}$ of $\omega$ verify
\begin{equation} \label{eq:normcoord}
h_{i\bar{j}}(0) = \delta_{i}^{j} \,\, , \quad
\frac{\partial h_{j\bar{\ell}}}{\partial z^i} (0) + \frac{\partial h_{i\bar{\ell}}}{\partial z^j} (0) = 0 \,\, .
\end{equation}
\end{lemma}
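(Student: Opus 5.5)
Start from arbitrary local holomorphic coordinates $w$ centred at $p$. After a complex linear change we may assume $h_{i\bar j}(0)=\delta_i^j$. We then look for a quadratic holomorphic change of coordinates that kills the symmetric part of the first derivatives. Concretely, set
$$
w^\ell = z^\ell + \tfrac12 a^\ell_{ij} z^i z^j , \qquad a^\ell_{ij}=a^\ell_{ji}\in\mathbb{C},
$$
which is a biholomorphism near $0$ because its differential at $0$ is the identity.

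Write $\tilde h_{i\bar j}$ for the components of $\omega$ in the coordinates $z$. Then
$$
\tilde h_{i\bar j}(z)=\frac{\partial w^a}{\partial z^i}\,\overline{\frac{\partial w^b}{\partial z^j}}\,h_{a\bar b}(w(z)), \qquad \frac{\partial w^a}{\partial z^i}=\delta^a_i+a^a_{ik}z^k .
$$
Differentiating in $z^i$ at $0$, and using that $\partial_{z^i}\overline{(\cdot)}$ kills the antiholomorphic factor while $h_{a\bar b}(0)=\delta_a^b$, gives
$$
\frac{\partial\tilde h_{j\bar\ell}}{\partial z^i}(0)=a^\ell_{ji}+\frac{\partial h_{j\bar\ell}}{\partial w^i}(0).
$$
The step that needs care is keeping track of which index is differentiated; only the holomorphic Jacobian contributes, through $\partial_{z^i}\partial_{z^j}w^\ell=a^\ell_{ij}$.

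Now symmetrize in $(i,j)$. Since $a^\ell_{ij}$ is symmetric,
$$
\frac{\partial\tilde h_{j\bar\ell}}{\partial z^i}(0)+\frac{\partial\tilde h_{i\bar\ell}}{\partial z^j}(0)=2a^\ell_{ij}+\frac{\partial h_{j\bar\ell}}{\partial w^i}(0)+\frac{\partial h_{i\bar\ell}}{\partial w^j}(0).
$$
Choose
$$
a^\ell_{ij}=-\tfrac12\Big(\frac{\partial h_{j\bar\ell}}{\partial w^i}(0)+\frac{\partial h_{i\bar\ell}}{\partial w^j}(0)\Big),
$$
which is symmetric in $i,j$ as required. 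With this choice the right-hand side vanishes, and $\tilde h_{i\bar j}(0)=\delta_i^j$ is preserved, which proves \eqref{eq:normcoord}.

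The antisymmetric part of the first derivatives cannot be removed: by \eqref{eq:Tcoord} it equals the torsion at $p$. This is why the lemma only asks for the symmetric part to vanish. There is no real obstacle beyond the index bookkeeping above.
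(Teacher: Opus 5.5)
Your argument is correct and is essentially the paper's: a holomorphic change of coordinates tangent to the identity, with a quadratic correction chosen to cancel the symmetric part $\partial_{z^i} h_{j\bar\ell}(0)+\partial_{z^j} h_{i\bar\ell}(0)$. The only difference is that you write the old coordinates as an explicit quadratic in the new ones, which spares you the inversion step, and your normalization $a^\ell_{ij}=-\tfrac12(\cdots)$ is the right one: in the paper's direction it corresponds to $w^k=z^k+\tfrac14\big(\partial_{z^r}h_{s\bar k}(0)+\partial_{z^s}h_{r\bar k}(0)\big)z^rz^s$, whereas the coefficient $\tfrac12$ printed there actually yields $-\big(\partial_{z^i}h_{j\bar\ell}(0)+\partial_{z^j}h_{i\bar\ell}(0)\big)$ rather than $0$, a harmless factor-of-two slip.
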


\begin{proof}
Let $\{z^i\}_{i=1,{\dots},n}$ be a choice of holomorphic coordinates centred at $p \in M$ such that $h_{i\bar{j}}(0)=\delta_{ij}$. Consider
$$
w^k \coloneqq z^k+\frac{1}{2}\left(\frac{\partial h_{s\bar{k}}}{\partial z^r} (0) + \frac{\partial h_{r\bar{k}}}{\partial z^s} (0)\right)z^rz^s
$$
and notice that, by the holomorphic inverse function theorem, after possibly shrinking the coordinate neighbourhood, $\{w^k\}$ is a system of local holomorphic coordinates centred at $p$. Denote by $\tilde{h}_{i\bar{j}}$ the metric components in these new coordinates. We have
$$
z^k = w^k -\frac{1}{2}\left(\frac{\partial h_{s\bar{k}}}{\partial z^r} (0) + \frac{\partial h_{r\bar{k}}}{\partial z^s} (0)\right)w^rw^s +O(|w|^3) \,\, ,
$$
and
$$
\tilde{h}_{i\bar{j}} = h_{k\bar{\ell}}\frac{\partial z^k}{\partial w^i}\frac{\partial \bar{z}^{\ell}}{\partial \bar{w}^{j}} \,\, .
$$
Therefore, we get
$$
\tilde{h}_{i\bar{j}}(0) = \delta_{k}^{\ell}\delta_{i}^{k}\delta_{\ell}^{j} = \delta_{i}^{j}
$$
and
$$
\frac{\partial \tilde{h}_{j\bar{\ell}}}{\partial w^i}(0) +\frac{\partial \tilde{h}_{i\bar{\ell}}}{\partial w^j}(0) =
\frac{\partial h_{j\bar{\ell}}}{\partial z^i}(0)
+\frac{\partial h_{i\bar{\ell}}}{\partial z^j}(0)
+2\frac{\partial^2 z^{\ell}}{\partial w^i\partial w^j}(0) = 0 \,\, ,
$$
which concludes the proof.
\end{proof}

Local coordinates satisfying the properties in Lemma \ref{lem:locholnorm1} are called \emph{holomorphic normal coordinates at $p$}. They naturally extend the classical holomorphic normal coordinates in the K{\"a}hler case, and in general they differ from Riemannian normal coordinates. 

The next lemma records the Taylor expansions of the Hermitian metric, its inverse, and its determinant in local holomorphic normal coordinates.

\begin{lemma} \label{lem:locholnorm2}
Let $(M^{2n},J,\omega)$ be a Hermitian manifold and let
$\{z^i\}_{i=1,\dots,n}$ be local holomorphic normal coordinates centred at a point $p \in M$. Then the following Taylor expansions at $p$ hold:
\begin{align}
h_{i\bar{j}}(z) &= \delta_{i}^{j} +\frac{1}{2} T_{ki}^{j}(0) z^k + \frac{1}{2} \overline{ T_{\ell j}^{i}(0) } \bar z^\ell -\left(\Theta_{k \bar \ell i\bar j}(0) -\frac{1}{4} T_{ki}^{q}(0) \overline{T_{\ell j}^{q}(0)} \right) z^k\bar z^\ell \nonumber \\
&\qquad +\mathrm{(ch.t.)} +o(|z|^2) \,\, , \label{eq:Taylorh_ij} \\
h^{\bar{j}i}(z) &= \delta_{j}^{i} -\frac{1}{2}T_{kj}^{i}(0)z^k -\frac{1}{2}\overline{T_{\ell i}^{j}(0)} \bar{z}^{\ell} +\bigg(\Theta_{k\bar{\ell}j\bar{i}}(0) +\frac{1}{4}T_{kp}^{i}(0)\overline{T_{\ell p}^{j}(0)} \bigg)z^k\bar{z}^{\ell} \nonumber \\
&\qquad +\mathrm{(ch.t.)} +o(|z|^2) \,\, , \label{eq:Taylorh^ij} \\
\det(h_{m\bar{s}}) &= 1 + \frac{1}{2} \theta_k(0) \, z^k
+ \frac{1}{2} \overline{\theta_{\ell}(0)} \, \bar z^\ell - \left(\Theta_{k \bar \ell p \bar p}(0) -\frac{1}{4} \theta_k(0)\overline{\theta_{\ell}(0)} \right)\, z^k \bar z^\ell \nonumber \\
&\qquad +\mathrm{(ch.t.)} + o(|z|^2) \,\, , \label{eq:Taylordeth}
\end{align}
where $\mathrm{(ch.t.)}$ denotes ``charged terms'' of order $2$, \emph{i.e.}, terms of bidegree $(2,0)$ and $(0,2)$.
\end{lemma}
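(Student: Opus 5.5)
The plan is to write out the second-order Taylor polynomial of $h_{i\bar{j}}$ at $p$ and read off each coefficient from \eqref{eq:normcoord}, \eqref{eq:Tcoord} and \eqref{eq:Thetaijkl}. The expansions of $h^{\bar{j}i}$ and $\det(h_{m\bar{s}})$ then follow by purely algebraic manipulation of \eqref{eq:Taylorh_ij}.

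\emph{Step 1: linear terms of $h_{i\bar j}$.} Write
$$h_{i\bar{j}}(z)=\delta_i^j+\partial_k h_{i\bar j}(0)\,z^k+\partial_{\bar\ell}h_{i\bar j}(0)\,\bar z^\ell+\partial_k\partial_{\bar\ell}h_{i\bar j}(0)\,z^k\bar z^\ell+\mathrm{(ch.t.)}+o(|z|^2).$$
By \eqref{eq:normcoord}, $\partial_k h_{i\bar j}(0)$ is antisymmetric in $k,i$. Hence it equals half of $\partial_k h_{i\bar j}(0)-\partial_i h_{k\bar j}(0)$. Since $h^{\bar\ell k}(0)=\delta$, formula \eqref{eq:Tcoord} identifies this as $\tfrac12 T^j_{ki}(0)$. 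The antiholomorphic coefficient comes from Hermitian symmetry, $h_{i\bar j}=\overline{h_{j\bar i}}$, which gives $\partial_{\bar\ell}h_{i\bar j}(0)=\overline{\partial_\ell h_{j\bar i}(0)}=\tfrac12\overline{T^i_{\ell j}(0)}$.

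\emph{Step 2: mixed term of $h_{i\bar j}$.} Evaluate \eqref{eq:Thetaijkl} at $0$, with indices $(k,\bar\ell,i,\bar j)$, and insert the first derivatives found in Step 1. The quadratic term becomes
$$\delta^{qp}\,\partial_{\bar\ell}h_{p\bar j}(0)\,\partial_k h_{i\bar q}(0)=\tfrac14\,T^q_{ki}(0)\overline{T^q_{\ell j}(0)}.$$
Solving for the second derivative gives $\partial_k\partial_{\bar\ell}h_{i\bar j}(0)=-\Theta_{k\bar\ell i\bar j}(0)+\tfrac14 T^q_{ki}\overline{T^q_{\ell j}}$, which is \eqref{eq:Taylorh_ij}.

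\emph{Step 3: inverse and determinant.} Set $A\coloneqq h-I$, viewed as a matrix. Then $h^{-1}=I-A+A^2+O(|z|^3)$. Only the product of the holomorphic linear part of $A$ with its antiholomorphic linear part contributes to the mixed $z\bar z$ coefficient of $A^2$; the products of two holomorphic (or two antiholomorphic) pieces are charged terms. Keeping track of the transposition between $h_{i\bar j}$ and $h^{\bar j i}$ (so that $h^{\bar j i}h_{i\bar m}=\delta$), this produces \eqref{eq:Taylorh^ij}.

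For the determinant, I use $\det h=\exp(\operatorname{tr}A-\tfrac12\operatorname{tr}A^2+\dots)$.
\begin{itemize}
\item The linear part of $\operatorname{tr}A$ is $\tfrac12T^p_{kp}z^k+\text{c.c.}=\tfrac12\theta_k z^k+\text{c.c.}$, using $\theta_k=T^p_{kp}$.
\item The mixed part of $\operatorname{tr}A$ is $-\Theta_{k\bar\ell p\bar p}+\tfrac14T^q_{kp}\overline{T^q_{\ell p}}$.
\item The mixed part of $-\tfrac12\operatorname{tr}A^2$ is $-\tfrac12\cdot2\cdot\tfrac14T^q_{kp}\overline{T^q_{\ell p}}$. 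This exactly cancels the torsion-squared term above.
\item Exponentiating, the square of the linear part contributes the cross term $\tfrac14\theta_k\overline{\theta_\ell}$.
\end{itemize}
Together these give \eqref{eq:Taylordeth}.

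The main obstacle is index bookkeeping: the position and conjugation of indices in $T$ and in the transposed inverse matrix, and verifying the cancellation of the $|T|^2$-type terms in the determinant. I will check each contraction against the conventions of \eqref{eq:Tcoord} and \eqref{eq:Thetaijkl}, and use the Hopf example of Example~\ref{ex:Hopf} as a sanity check.
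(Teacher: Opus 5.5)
Your proposal is correct and follows essentially the same route as the paper. You read off the linear and mixed coefficients of $h_{i\bar j}$ from \eqref{eq:normcoord}, \eqref{eq:Tcoord} and \eqref{eq:Thetaijkl}, then expand the inverse as $I-A+A^2$ and the determinant to second order; your $\exp(\operatorname{tr}A-\tfrac12\operatorname{tr}A^2)$ is equivalent to the paper's $1+\operatorname{tr}A+\tfrac12\big((\operatorname{tr}A)^2-\operatorname{tr}A^2\big)$, and the index contractions you outline (the cancellation of the $T\overline{T}$ terms in the determinant, and the surviving $\tfrac14 T^i_{kp}\overline{T^j_{\ell p}}$ in $h^{\bar j i}$) do produce exactly the stated expansions.
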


\begin{proof}
A direct computation based on \eqref{eq:Thetaijkl}, \eqref{eq:Tcoord} and \eqref{eq:normcoord} shows that the Taylor expansion of the metric coefficients in holomorphic normal coordinates centred at $p$ is given by
\begin{equation}\label{eq:metric_expansion}
h_{i\bar{j}}(z) = \delta_{i}^{j} + A_{i\bar j}(z) \,\, ,
\end{equation}
where
\begin{align*}
A_{i\bar j}(z) &= \frac{1}{2} T_{ki}^{j}(0) z^k + \frac{1}{2} \overline{ T_{\ell j}^{i}(0) } \bar z^\ell + \left( -\Theta_{k \bar \ell i\bar j}(0)+\frac{1}{4} T_{ki}^{q}(0) \overline{T_{\ell j}^{q}(0)} \right) z^k\bar z^\ell \\
&\qquad +\mathrm{(ch.t.)} + o(|z|^2) \,\, ,
\end{align*}
and so this proves \eqref{eq:Taylorh_ij}. Since $A_{i\bar{j}}(z) = O(|z|)$, \eqref{eq:metric_expansion} implies that
$$
h^{\bar{j}i}(z) = \delta_{j}^{i} - A_{j\bar i}(z) + A_{j\bar p}(z)A_{p\bar i}(z) + o(|z|^2)
$$
and
$$
\det(h_{m\bar{s}})(z) = 1 + A_{i\bar i}(z) + \frac{1}{2} \left( A_{i\bar i}(z)^2 - A_{i\bar j}(z)A_{j\bar i}(z) \right) + o(|z|^2) \,\, .
$$
Therefore, \eqref{eq:Taylorh^ij} and \eqref{eq:Taylordeth} follows by a straightforward computation.
\end{proof}

We now compute the angular average of the volume form, to which we are reduced when integrating radial functions.

\begin{lemma} \label{lem:locholnorm3}
Let $(M^{2n},J,\omega)$ be a Hermitian manifold, and let $\{z^i\}_{i=1,\dots,n}$ be local holomorphic normal coordinates centred at $p \in M$. For every sufficiently small $r>0$, the following Taylor expansions at $p$ hold:
\begin{align}
&\frac{1}{\mathrm{Vol}(S_1^{2n-1})}\int_{S_r^{2n-1}} \det(h_{m\bar{s}}) \,d\sigma_r = r^{2n-1} + \Phi(p) r^{2n+1} + o(r^{2n+1}) \,\, , \label{eq:avdet1} \\
&\frac{1}{\mathrm{Vol}(S_1^{2n-1})} \int_{S_r^{2n-1}} h^{\bar{j}i} z^j \bar{z}^i \det(h_{m\bar s}) \,d\sigma_r = r^{2n+1} + \Psi(p) r^{2n+3} + o(r^{2n+4}) \,\, , \label{eq:avdet2}
\end{align}
where $S^{2n-1}_r \subset \mathbb{C}^n$ is the sphere of radius $r$ centred at the origin, $d\sigma_r$ denotes the area element of $S^{2n-1}_r$ induced by the flat metric of $\mathbb{C}^n$, and
\begin{align}
\Phi &\coloneqq -\frac{1}{2n}\left(\mathrm{scal}^{\mathrm{Ch}} -\frac{1}{4}|\theta|^2\right) \,\, , \label{eq:Phi} \\
\Psi &\coloneqq -\frac{1}{2n(n+1)}\left((n-1)\,\mathrm{scal}^{\mathrm{Ch}} +d^*\theta -\frac{n-2}{4}|\theta|^2 -\frac{1}{4}|T|^2\right) \,\, . \label{eq:Psi}
\end{align}
\end{lemma}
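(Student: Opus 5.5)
The plan is to integrate the expansion \eqref{eq:Taylordeth} of Lemma \ref{lem:locholnorm2} over the sphere $S^{2n-1}_r$, using the standard spherical moments of monomials in $z,\bar z$. The facts needed are the following. Odd monomials, and more generally any monomial of nonzero charge (bidegree $(a,b)$ with $a\neq b$), have vanishing average. This kills the linear terms and all charged terms $\mathrm{(ch.t.)}$. For the mixed quadratic terms,
$$\frac{1}{\mathrm{Vol}(S^{2n-1}_1)}\int_{S^{2n-1}_r} z^k\bar z^\ell\, d\sigma_r = \frac{r^{2n+1}}{n}\,\delta_k^\ell .$$
For the quartic terms of bidegree $(2,2)$,
$$\frac{1}{\mathrm{Vol}(S^{2n-1}_1)}\int_{S^{2n-1}_r} z^a\bar z^b z^c\bar z^d\, d\sigma_r = \frac{r^{2n+3}}{n(n+1)}\bigl(\delta_a^b\delta_c^d+\delta_a^d\delta_c^b\bigr).$$
The remainders $o(|z|^2)$, respectively $o(|z|^4)$ after multiplying by $|z|^2$, contribute $o(r^{2n+1})$, respectively $o(r^{2n+3})$.

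For \eqref{eq:avdet1}, only the $(1,1)$ quadratic term of $\det(h)$ survives. It gives
$$\Phi = -\frac1n\Bigl(\sum_{k,p}\Theta_{k\bar k p\bar p}(0)-\frac14\sum_k|\theta_k(0)|^2\Bigr).$$
I then translate into invariants at $p$, where $h_{i\bar j}=\delta$. By \eqref{eq:scalChcoord}, $\sum\Theta_{k\bar kp\bar p}=\frac12\mathrm{scal}^{\mathrm{Ch}}$. By \eqref{eq:|T|coord}, $\sum|\theta_k|^2=\frac12|\theta|^2$. Substituting yields $\Phi=-\frac1{2n}\bigl(\mathrm{scal}^{\mathrm{Ch}}-\frac14|\theta|^2\bigr)$, as in \eqref{eq:Phi}. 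One bookkeeping check is needed: the normal-coordinate condition \eqref{eq:normcoord} gives $\partial_i h_{j\bar\ell}(0)=\frac12 T_{ij}^\ell(0)$, so the linear coefficient of $\det h$ is indeed $\frac12\theta_k$.

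For \eqref{eq:avdet2}, I expand the product $h^{\bar j i}\det(h)$ up to total order $2$ in $z$ and multiply by $z^j\bar z^i$. The degree-$0$ part gives $|z|^2$ and hence $r^{2n+1}$. The linear parts become cubic monomials of nonzero charge once multiplied, and they vanish. The surviving order-$2$ terms of bidegree $(1,1)$ come from three sources. The first is the quadratic part of $h^{\bar j i}$ in \eqref{eq:Taylorh^ij}, namely $\Theta_{k\bar\ell j\bar i}+\frac14 T^i_{kp}\overline{T^j_{\ell p}}$. The second is $\delta^i_j$ times the quadratic part of $\det$. The third is the cross products of linear terms, $-\frac14 T^i_{kj}z^k\cdot\overline{\theta_\ell}\bar z^\ell$ and its conjugate.

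Applying the quartic moment formula, each contraction pattern yields one of the following quantities:
\begin{itemize}
\item $\sum\Theta_{k\bar kj\bar j}$ and $\sum\Theta_{k\bar jj\bar k}$, the latter being $\frac12\widetilde{\mathrm{scal}}{}^{\mathrm{Ch}}$, which \eqref{eq:scalCh-scalCh} converts to $\mathrm{scal}^{\mathrm{Ch}}-d^*\theta-|\theta|^2$;
\item $\sum|T^i_{kp}|^2=\frac12|T|^2$;
\item contractions such as $\sum T^i_{kp}\overline{T^k_{ip}}$, which reduce via antisymmetry of $T$ to multiples of $|T|^2$;
\item contractions of the form $T^i_{ki}\bar\theta_k$, which equal $\pm\theta_k\bar\theta_k$ and hence are multiples of $|\theta|^2$.
\end{itemize}
Collecting everything with the overall factor $\frac1{n(n+1)}$ should give \eqref{eq:Psi}. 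The $d^*\theta$ term enters only through $\widetilde{\mathrm{scal}}{}^{\mathrm{Ch}}$.

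The main obstacle is this last bookkeeping: tracking conjugate index placements and signs in the torsion contractions, especially distinguishing $T^i_{kp}\overline{T^{k}_{ip}}$-type from $|T|^2$-type traces and getting the sign of the $T\bar\theta$ cross terms right, so that the coefficients $-\frac{n-2}{4}|\theta|^2$ and $-\frac14|T|^2$ come out exactly. To control this, I would first verify the final formula against the Hopf metric of Example \ref{ex:Hopf}, where all quantities are explicit, and against the Kähler case, where it must reduce to the classical $-\frac{n-1}{2n(n+1)}\mathrm{scal}$.
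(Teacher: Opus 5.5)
\textbf{Verdict: same approach as the paper, but the $\Psi$ computation is left unfinished, and the sketch you give for it contains an error that would produce the wrong coefficient.}

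Your set-up is exactly the paper's. The sphere moments, the vanishing of charged and odd terms, and the three sources of $(1,1)$ quadratic terms in $h^{\bar j i}\det(h_{m\bar s})$ are all correct, and your derivation of $\Phi$ is complete.

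The gap is in $\Psi$. The content of that half of the lemma \emph{is} the coefficient count, and you stop at ``should give \eqref{eq:Psi}''. The mechanism you anticipate is also wrong in a way that changes the answer:
\begin{itemize}
\item The contraction $\sum T^i_{kp}\overline{T^k_{ip}}$ you list never occurs. It pairs an upper index of $T$ with a lower index of $\overline{T}$, so it is not invariant under unitary changes of frame. It is also not a multiple of $|T|^2$, since antisymmetry of $T$ concerns only the lower indices.
\item What does occur: the pairing $\delta^j_\ell\delta^k_i$ applied to $\frac14 T^i_{kp}\overline{T^j_{\ell p}}\,z^j\bar z^i z^k\bar z^\ell$ gives $\frac14\sum_p T^i_{ip}\overline{T^j_{jp}}=\frac14\sum_p|\theta_p|^2$, because $T^i_{ip}=-\theta_p$. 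So the torsion--torsion term feeds $|\theta|^2$, not only $|T|^2$.
\item The cross term $-\frac14 T^i_{kj}\overline{\theta_\ell}$ gives $-\frac14\sum|\theta_i|^2$ under $\delta^j_i\delta^k_\ell$ and $+\frac14\sum|\theta_i|^2$ under $\delta^j_\ell\delta^k_i$. The same holds for its conjugate, so the cross terms cancel exactly.
\end{itemize}

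With both pairings and the factor $\frac{r^{2n+3}}{n(n+1)}$, the correct tally is:
\begin{itemize}
\item $\Theta_{k\bar\ell j\bar i}$ contributes $\Theta_{i\bar i j\bar j}+\Theta_{i\bar j j\bar i}$;
\item the torsion--torsion term contributes $\frac14\sum|T^k_{ij}|^2+\frac14\sum|\theta_i|^2$;
\item $\delta^i_j\bigl(-\Theta_{k\bar\ell p\bar p}+\frac14\theta_k\overline{\theta_\ell}\bigr)$ contributes $-(n+1)\Theta_{i\bar i j\bar j}+\frac{n+1}{4}\sum|\theta_i|^2$.
\end{itemize}
The total is
\[
-n\,\Theta_{i\bar i j\bar j}+\Theta_{i\bar j j\bar i}+\frac{n+2}{4}\sum_i|\theta_i|^2+\frac14\sum_{i,j,k}|T^k_{ij}|^2 .
\]
Then \eqref{eq:scalChcoord}, \eqref{eq:scalChcoord2}, \eqref{eq:|T|coord} and \eqref{eq:scalCh-scalCh} convert this into $\Psi$; this is precisely the paper's computation.

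If you book the torsion--torsion term as pure $|T|^2$, the $|\theta|^2$ coefficient comes out wrong. Your proposed checks would not catch this. On the Hopf manifold $d^*\theta=0$ and $|T|^2=\frac{2}{n-1}|\theta|^2$, so only one combination of the $|\theta|^2$ and $|T|^2$ coefficients is tested. The K\"ahler case tests only the $\mathrm{scal}^{\mathrm{Ch}}$ coefficient, and neither case tests the $d^*\theta$ coefficient.

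Finally, your remainder $o(r^{2n+3})$ is all that is used later. The stated $o(r^{2n+4})$ also holds: the cubic Taylor terms multiplied by $z^j\bar z^i$ have odd degree and average to zero.
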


\begin{proof}
A direct computation shows that
\begin{equation} \label{eq:formulasint} \begin{aligned}
&\frac{1}{\mathrm{Vol}(S_1^{2n-1})}\int_{S_r^{2n-1}} d\sigma_r = r^{2n-1} \,\, , \\
&\frac{1}{\mathrm{Vol}(S_1^{2n-1})}\int_{S_r^{2n-1}} z^j\bar{z}^i\,d\sigma_r = \frac{r^{2n+1}}{n}\delta_{i}^{j} \,\, , \\
&\frac{1}{\mathrm{Vol}(S_1^{2n-1})}\int_{S_r^{2n-1}} z^j\bar{z}^iz^k\bar{z}^{\ell}\,d\sigma_r = \frac{r^{2n+3}}{n(n+1)}\big(\delta_{i}^{j}\delta_{\ell}^{k}+\delta_{\ell}^{j}\delta_{i}^{k}\big) \,\, ,
\end{aligned}\end{equation}
while the angular average of charged terms is zero.
Therefore, by \eqref{eq:Taylordeth} and \eqref{eq:formulasint}, we obtain
\begin{align*}
&\frac{1}{\mathrm{Vol}(S_1^{2n-1})}\int_{S_r^{2n-1}} \det(h_{m\bar{s}}) \,d\sigma_r = \\
&\qquad = r^{2n-1} -\frac{1}{n}\left( \Theta_{i\bar i j \bar j}(0)-\frac{1}{4}\theta_i(0)\overline{\theta_i(0)} \right) r^{2n+1} +o(r^{2n+1}) \,\, .
\end{align*}
By \eqref{eq:scalChcoord}, \eqref{eq:|T|coord} and \eqref{eq:Phi}, it follows that
$$
-\frac{1}{n}\left( \Theta_{i\bar i j \bar j}(0)-\frac{1}{4}\theta_i(0)\overline{\theta_i(0)} \right) = \Phi(p)
$$
and so this proves \eqref{eq:avdet1}.

Moreover, by \eqref{eq:Taylorh^ij} and \eqref{eq:Taylordeth} we obtain
\begin{align*}
&h^{\bar{j}i}(z)z^j\bar{z}^i\det(h_{m\bar{s}}(z)) = \\
&=\delta_{j}^{i}z^j\bar{z}^i +\bigg(-\Theta_{k\bar{\ell}p\bar{p}}(0)\delta_{j}^{i} +\frac{1}{4}\theta_k(0)\overline{\theta_{\ell}(0)}\delta_{j}^{i} +\Theta_{k\bar{\ell}j\bar{i}}(0) \\
&\qquad +\frac{1}{4}T_{kp}^{i}(0)\overline{T_{\ell p}^{j}(0)} -\frac{1}{4}T_{kj}^{i}(0)\overline{\theta_{\ell}(0)} -\frac{1}{4}\theta_k(0)\overline{T_{\ell i}^{j}(0)} \bigg)z^j\bar{z}^iz^k\bar{z}^{\ell} \\
&\qquad +\mathrm{(ch.t.)} +o(|z|^4) \,\, ,
\end{align*}
where $\mathrm{(ch.t.)}$ denotes charged terms of order $\leq 4$. Therefore, by \eqref{eq:formulasint}, it follows that
\begin{align*}
&\frac{1}{\mathrm{Vol}(S_1^{2n-1})}\int_{S_r^{2n-1}} h^{\bar{j}i}(z)z^j\bar{z}^i\det(h_{m\bar{s}})\,d\sigma_r = \\
&\qquad = r^{2n+1} +\frac{1}{n(n+1)}\bigg(-n\Theta_{i\bar{i}j\bar{j}}(0) +\Theta_{i\bar{j}j\bar{i}}(0) +\frac{n+2}{4}\theta_i(0)\overline{\theta_i(0)} \\
&\qquad\qquad +\frac{1}{4}T_{ij}^{k}(0)\overline{T_{ij}^{k}(0)}\bigg)r^{2n+3} +o(r^{2n+3}) \,\, .
\end{align*}
By \eqref{eq:scalChcoord}, \eqref{eq:scalChcoord2}, \eqref{eq:scalCh-scalCh}, \eqref{eq:|T|coord} and \eqref{eq:Psi}, we obtain
$$\begin{aligned}
&\frac{1}{n(n+1)} \left(-n\cdot\Theta_{k \bar ki\bar i}(0)+\Theta_{p\bar q q\bar p}(0)+\frac{n+2}{4}\theta_k(0)\theta_{\bar k}(0) +\frac{1}{4} T_{pa}^k(0)\overline{T_{pa}^{k}(0)}\right) = \\
&\qquad = \frac{1}{2n(n+1)}\left(-n\,\mathrm{scal}^{\mathrm{Ch}}(p) +\widetilde{\mathrm{scal}}{}^{\mathrm{Ch}}(p) +\frac{n+2}{4}|\theta(p)|^2 +\frac{1}{4}|T(p)|^2\right) \\
&\qquad = \Psi(p) \,\, ,
\end{aligned}$$
and so this proves \eqref{eq:avdet2}.
\end{proof}

We are now ready to prove Theorem \ref{thm:bubble_test}.

\begin{proof}[Proof of Theorem \ref{thm:bubble_test}]
Let $\{z^i\}_{i=1,\dots,n}$ be local holomorphic normal coordinates centred at $p \in M$. Let $z^i = x^{2i-1} +\sqrt{-1} x^{2i}$ and denote by
$d\mathcal L \coloneqq dx^1 \wedge {\dots} \wedge dx^{2n}$
the standard Lebesgue measure on $\mathbb{C}^n$, so that
\begin{equation} \label{eq:Lebesgue}
\frac{\omega^n}{n!} = 2^n \det(h_{m\bar{s}})\,d\mathcal{L} \,\, .
\end{equation}
Moreover, for brevity, we set
$$
\nu_{2n-1} \coloneqq {\mathrm{Vol}(S_1^{2n-1})} \,\, .
$$

In order to construct suitable test functions for the deformed Yamabe functional, we consider the \emph{standard bubbles}
$$ U_{\varepsilon}(z) = \left(\frac{\varepsilon}{\varepsilon^2+|z|^2}\right)^{n-1} , $$
depending on the parameter $\varepsilon>0$, see, \emph{e.g.}, \cite[p.\ 48]{MR0888880}. They are extremal for the Sobolev inequality in the Euclidean space \cite{MR0448404, MR0463908}.

We consider a ball $B(0,\delta)$ contained in the local chart, and the radial coordinate $r=|z|$. Let $\eta$ be a smooth radial cutoff function with
$$
\eta(r)=1 \quad \text{for $r \leq \tfrac{\delta}{2}$} \,\, , \quad \eta(r)=0 \quad \text{for $r\geq\delta$} \,\, .
$$
Then we consider the radial test functions
$$
u_\varepsilon(z) \coloneqq \eta(|z|)\, U_\varepsilon(z) \,\, .
$$
We now compute the Taylor expansion at $p$ of the quantity
\begin{equation} \label{eq:EHappendix}
\frac{\int_M\left(\psi u_{\varepsilon}^2+k|\nabla u_{\varepsilon}|^2\right)\frac{\omega^n}{n!}}
{\|u_{\varepsilon}\|^2_{L^q}}
\end{equation}
as $\varepsilon \to 0$. For later use, for $k<n$, we set
$$
I_k \coloneqq \int_0^{+\infty} \frac{r^{2n-1+2k}}{(1+r^2)^{2n}} \,dr \,\, , \quad J_k \coloneqq \int_0^{+\infty} \frac{r^{2n-1+2k}}{(1+r^2)^{2n-2}} \,dr
$$
(see, \emph{e.g.}, \cite[Lemma 3.5]{MR0888880}).

By \eqref{eq:avdet1} and \eqref{eq:Lebesgue}, we obtain
\begin{align*}
\|u_\varepsilon\|_{L^q}^q
&=  2^n\int_{B(0,\delta)} \eta(|z|)^q U_\varepsilon(z)^q \det(h_{m\bar s})(z) \, d\mathcal L \\
&= 2^n\nu_{2n-1}\int_{0}^{\frac{\delta}{\varepsilon}} \frac{1}{\varepsilon^{2n}}\eta(r\varepsilon)^q \frac{r^{2n-1}}{\left(1+r^2\right)^{2n}} \cdot \left( (r\varepsilon)^{2n-1} +\Phi(p) (r\varepsilon)^{2n+1}\right. \\
&\qquad \left.+ o(\varepsilon^{2n+1})\right) \, \varepsilon \,dr \\
&= 2^n\nu_{2n-1} \left( \int_{0}^{+\infty} \frac{r^{2n-1}}{\left(1+r^2\right)^{2n}} \, dr + O(\varepsilon^{2n}) \right) \\
&\qquad + 2^n \nu_{2n-1} \Phi(p) \left(\int_{0}^{+\infty} \frac{r^{2n+1}}{\left(1+r^2\right)^{2n}}dr + O(\varepsilon^{2n-2}) \right) \varepsilon^2 + o(\varepsilon^2) \\
&= 2^n\nu_{2n-1}I_0 + 2^n \nu_{2n-1} \Phi(p) I_1\varepsilon^2 + o(\varepsilon^2) \,\, ,
\end{align*}
where $\Phi$ is defined in \eqref{eq:Phi}. By recalling that
\begin{equation} \label{eq:usefulformulas}
\frac{I_1}{I_0}=\frac{n}{n-1} \,\, , \quad \frac{1}{\sigma_{2n}} = 4n(n-1)(\nu_{2n-1}I_0)^{\frac{1}{n}} \,\, ,
\end{equation}
we obtain the following expansion for the denominator of \eqref{eq:EHappendix}:
\begin{equation} \label{eq:expdenom}
\|u_\varepsilon\|_{L^q}^2 = 2^{n+1}\nu_{2n-1}\sigma_{2n}n(n-1)I_0 \left( 1 + \Phi(p) \varepsilon^2 + o(\varepsilon^2) \right) \,\, .
\end{equation}

Furthermore, by \eqref{eq:avdet1}, \eqref{eq:avdet2} and \eqref{eq:Lebesgue}, we obtain
\begin{align*}
&\int_M u_\varepsilon^2 \psi \,\frac{\omega^n}{n!} = \\
&\quad = 2^n\int_{B(0,\delta)} \eta(|z|)^2 U_\varepsilon(z)^2 \psi(z) \det(h_{m\bar s})(z) \, d\mathcal{L} \\
&\quad = 2^n\nu_{2n-1}\varepsilon^{3-2n} \int_{0}^{\frac{\delta}{\varepsilon}} \eta(r\varepsilon)^2 \frac{1}{(1+r^2)^{2n-2}} \left( \psi(p) r^{2n-1} \varepsilon^{2n-1} + o(\varepsilon^{2n-1}) \right)  \,dr \\
&\quad = 2^n\nu_{2n-1}\varepsilon^{3-2n} \left( \psi(p)\varepsilon^{2n-1} \int_{0}^{+\infty} \frac{r^{2n-1}}{(1+r^2)^{2n-2}} dr + O(\varepsilon^{2n-4}) + o(\varepsilon^{2n-1}) \right) \\
&\quad = 2^n\nu_{2n-1}\psi(p) J_0 \,\varepsilon^2 + o(\varepsilon^2)
\end{align*}
and
\begin{align*}
&\|\nabla u_\varepsilon \|_{L^2}^2 = \\
&= 2^{n}\int_{B(0,\delta)} 2h^{\bar\ell k} \left(u_\varepsilon'(z) \frac{\bar z^k}{2|z|}\right) \left(u_\varepsilon'(z) \frac{z^\ell}{2|z|}\right) \det(h_{m\bar s})(z) \, d\mathcal{L} \\
&= 2^{n-1}\nu_{2n-1}\varepsilon^2 \int_{0}^{\frac{\delta}{\varepsilon}} r^{2n-1} \left(\frac{\eta'(r\varepsilon)}{(1+r^2)^{n-1}} - \frac{2(n-1)}{\varepsilon} \frac{r\eta(r\varepsilon)}{(1+r^2)^n} \right)^2 (1+\Psi (p)r^2\varepsilon^2 \\
&\qquad +o(\varepsilon^2)) \, dr \\
&= 2^{n+1}(n-1)^2\nu_{2n-1} \int_{0}^{+\infty} \left(\frac{r^{2n-1}}{(1+r^2)^{2n}}+O(\varepsilon^{2n-2}) \right) (r^2+\Psi(p) r^4\varepsilon^2+o(\varepsilon^2)) \, dr \\
&= 2^{n+1}(n-1)^2\nu_{2n-1} (I_1+\Psi(p) I_2\,\varepsilon^2 + o(\varepsilon^2)) .
\end{align*}
Therefore, using \eqref{eq:usefulformulas} and recalling that
$$
\frac{J_0}{I_0} = \frac{2(2n-1)}{n-2} \,\, , \quad
\frac{I_2}{I_0} = \frac{n(n+1)}{(n-1)(n-2)} \,\, ,
$$
we obtain the following expansion for the numerator of \eqref{eq:EHappendix}:
\begin{multline} \label{eq:expnum}
\int_M (u_{\varepsilon}^2\,\psi + k|\nabla u_{\varepsilon}|^2) \,\frac{\omega^n}{n!} = \\
= 2^n\nu_{2n-1} \left( (2k(n-1)^2I_1) + (\psi(p) J_0+2k(n-1)^2\Psi(p) I_2) \varepsilon^2 +o(\varepsilon^2) \right) \,\, .
\end{multline}
Finally, using \eqref{eq:expdenom} and \eqref{eq:expnum}, we obtain
\begin{align*}
&\frac{\int_M (u_\varepsilon^2\,\psi + k|\nabla u|^2) \,\frac{\omega^n}{n!}}{\|u_\varepsilon\|_{L^q}^2} = \\
&\qquad = \frac{k}{\sigma_{2n}} + \frac{1}{\sigma_{2n}2n(n-2)}\bigg(\frac{2(2n-1)}{n-1}\psi(p) \\
&\qquad\qquad -2nk\big((n-2)\Phi(p)-(n+1)\Psi(p)\big) \bigg) \varepsilon^2 + o(\varepsilon^2) \,\, .
\end{align*}
Moreover, by \eqref{eq:scal-scalCh}, \eqref{eq:Phi} and \eqref{eq:Psi}, we have
$$
2n\big((n-2)\Phi-(n+1)\Psi\big) = \mathrm{scal}(g_{\omega})
$$
and so this proves \eqref{eq:subcrit-appendix}.
\end{proof}

\bibliographystyle{plain}
\bibliography{biblio}

\end{document}